\documentclass[12pt]{amsart}
\usepackage[a4paper, total={156mm, 224mm}, centering]{geometry}
\numberwithin{equation}{section}

\usepackage[T1]{fontenc}
\usepackage{lmodern}
\usepackage{amsmath,amssymb,amsthm,mathtools}
\usepackage{enumitem}
\usepackage[hidelinks]{hyperref}
\usepackage[expansion=false]{microtype}

\newtheorem{theorem}{Theorem}[section]
\newtheorem{proposition}[theorem]{Proposition}
\newtheorem{lemma}[theorem]{Lemma}

\newtheorem{conjecture}[theorem]{Conjecture}

\theoremstyle{definition}
\newtheorem{definition}[theorem]{Definition}
\newtheorem{example}[theorem]{Example}
\theoremstyle{remark}

\def\A{\mathbb{A}}
\def\P{\mathbb{P}}
\def\C{\mathbb{C}}
\def\Q{\mathbb{Q}}
\def\R{\mathbb{R}}
\def\Z{\mathbb{Z}}
\def\F{\mathbb{F}}
\def\sO{\mathcal{O}}
\def\sM{\mathcal{M}}
\def\sP{\mathcal{P}}
\def\sX{\mathcal{X}}
\def\sA{\mathcal{A}}
\def\sR{\mathcal{R}}
\def\sW{\mathcal{W}}
\def\sT{\mathcal{T}}
\def\sL{\mathcal{L}}
\def\sH{\mathcal{H}}
\def\sN{\mathcal{N}}
\def\sD{\mathcal{D}}
\def\sI{\mathcal{I}}
\def\ba{\mathbf{a}}
\def\bb{\mathbf{b}}
\def\bc{\mathbf{c}}
\def\bz{\mathbf{z}}
\def\Poly{\operatorname{Poly}}
\def\MPoly{\operatorname{MPoly}}
\def\Rat{\operatorname{Rat}}
\def\PGL{\operatorname{PGL}}
\def\Crit{\operatorname{Crit}}
\def\Gal{\operatorname{Gal}}
\def\Spec{\operatorname{Spec}}
\def\Sym{\operatorname{Sym}}
\def\red{\operatorname{red}}
\def\Supp{\operatorname{Supp}}
\def\id{\operatorname{id}}
\def\pr{\operatorname{pr}}
\def\FL{\operatorname{FL}}
\def\Irr{\operatorname{Irr}}
\def\olQ{\overline{\Q}}
\def\Kab{K^{\mathrm{ab}}}
\def\Lab{L^{\mathrm{ab}}}
\def\Kc{K^{\mathrm{c}}}
\def\Lc{L^{\mathrm{c}}}
\def\dd{\,\mathrm{d}}

\newcommand{\Rom}[1]{\uppercase\expandafter{\romannumeral #1\relax}}

\title[Bogomolov property for polynomial moduli over abelian extensions]{A Bogomolov property for moduli spaces of polynomials over abelian extensions}
\date{October 11, 2026}

\author{Geng-Rui Zhang}
\address{School of Mathematical Sciences, Peking University, Beijing 100871, China}
\email{grzhang@stu.pku.edu.cn}
\email{chibasei@163.com}

\begin{document}

\begin{abstract}
Let $d\geq2$ be an integer, and let $\MPoly^d$ be the moduli space of degree-$d$ polynomials. For every number field $K$, we prove that there exists $\epsilon_{K,d}>0$ such that
\[
\left\lbrace\alpha\in\MPoly^d(\Kab)\colon h_{\mathrm{crit}}(\alpha)<\epsilon_{K,d}\right\rbrace=\left\lbrace\alpha\in\MPoly^d(\Kab)\colon h_{\mathrm{crit}}(\alpha)=0\right\rbrace
\]
is finite, where $h_{\mathrm{crit}}$ is the critical height. In particular, only finitely many $\Kab$-rational points of $\MPoly^d$ are postcritically finite. The proof combines the correspondence method of Ji--Song--Xie with local ramification estimates and the uniform finiteness of intertwined classes in $\MPoly^d$. We also prove an analogous finiteness theorem for non-preperiodic marked points in non-isotrivial one-parameter polynomial families.
\end{abstract}

\subjclass[2020]{Primary 37P30, 37P45; Secondary 11R20, 14G40, 37P05, 37P15}
\keywords{Critical heights, Bogomolov property, postcritically finite polynomials, moduli spaces of polynomials, abelian extensions, adelic line bundles, polynomial dynamical pairs}

\maketitle
\tableofcontents

\section{Introduction}
\subsection{Main theorem}
Fix an algebraic closure $\olQ\subseteq\C$. For a number field $K$, let $\Kab$ denote its maximal abelian extension in $\olQ$. A rational map $f\in\C(z)$ of degree $\geq2$ is called \emph{postcritically finite} (PCF) if every critical point of $f$ in $\P^1(\C)$ is preperiodic. If $f\in\C[z]$ is a polynomial, then infinity is a fixed critical point, and $f$ is PCF if all its critical points in $\C$ are preperiodic. Let $\MPoly^d$ be the moduli space of degree-$d$ polynomials under affine conjugacy, which is an irreducible affine variety of dimension $d-1$, defined over $\Q$; see \cite[\S~2.1]{FG22}.

Let $f\in\olQ(z)$ be a rational map of degree $d\geq2$. The \emph{canonical height} of $f$ is
\[
\hat{h}_f:\P^1(\olQ)\to\R_{\geq0},\,x\mapsto\lim_{n\to\infty}\frac{1}{d^n}h(f^{\circ n}(x)),
\]
where $h$ is the standard absolute logarithmic height on $\P^1(\olQ)$. For every $x\in\P^1(\olQ)$, $\hat{h}_f(x)=0$ if and only if $x$ is $f$-preperiodic; see \cite[Theorem~3.22]{Silverman2007}.

For $x\in\P^1(\olQ)$, let $\deg_f(x)$ be the local degree of $f$ at $x$. Define the \emph{critical height} of $f$ by
\[
h_{\mathrm{crit}}(f)=\sum_{c\in\Crit(f)}(\deg_f(c)-1)\hat{h}_f(c)\geq0,
\]
where $\Crit(f)$ is the set of critical points of $f$ in $\P^1(\olQ)$. Thus, $h_{\mathrm{crit}}(f)=0$ if and only if $f$ is PCF. If $f\in\olQ[z]$ is a polynomial, then $\hat{h}_f(\infty)=0$ and
\[
h_{\mathrm{crit}}(f)=\sum_{c\in\Crit_{\mathrm{fin}}(f)}(\deg_f(c)-1)\hat{h}_f(c),
\]
where $\Crit_{\mathrm{fin}}(f)$ is the set of finite critical points of $f$. Since the critical height is invariant under affine conjugacy, it defines a function on $\MPoly^d$:
\[
h_{\mathrm{crit}}:\MPoly^d(\olQ)\to\R_{\geq0},\,[f]\mapsto h_{\mathrm{crit}}(f).
\]

Our main theorem establishes a Bogomolov-type property for $h_{\mathrm{crit}}$ on $\MPoly^d(\Kab)$.
\begin{theorem}\label{thm:main}
Let $d\geq2$ be an integer and $K$ be a number field. There exists a constant $\epsilon_{K,d}>0$ such that the set
\[
\left\lbrace\alpha\in\MPoly^d(\Kab)\colon h_{\mathrm{crit}}(\alpha)<\epsilon_{K,d}\right\rbrace=\left\lbrace\alpha\in\MPoly^d(\Kab)\colon h_{\mathrm{crit}}(\alpha)=0\right\rbrace
\]
is finite.
\end{theorem}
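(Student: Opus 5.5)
The proof follows the Ji--Song--Xie correspondence method, the standard route to Bogomolov-type statements over $\Kab$ (cf.\ Amoroso--Zannier, Baker--DeMarco), adapted to the critical height on $\MPoly^d$.

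\textbf{Step 1: the Galois correspondence.} Pick a prime $p$ of $K$ (or of a finite extension, enlarged so that all PCF points of small degree are accounted for) and an element $\sigma\in\Gal(\Kab/K)$ that acts as a Frobenius-type lift at $p$. Concretely, $\sigma$ is chosen in the decomposition group at a place above $p$ so that, for every $\alpha\in\MPoly^d(\Kab)$ whose coordinates are integral at the relevant places, $\sigma(\alpha)$ is congruent to $\alpha^{(p)}$ modulo a uniformiser. Here $\alpha^{(p)}$ is the $p$-th power (Frobenius twist) of the coordinates. The local ramification estimates for abelian extensions control this congruence: they show that $|\sigma(x)-x^{p}|_v$ is small in a uniform way, as in Amoroso--Zannier's lemma for $\Kab$. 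Since $\sigma$ preserves $h_{\mathrm{crit}}$, every point $\alpha$ of small critical height yields a point $(\alpha,\sigma\alpha)$ of small height on $\MPoly^d\times\MPoly^d$ for the product adelic line bundle $\pr_1^*\overline{L}+\pr_2^*\overline{L}$. Here $\overline{L}$ is the adelic line bundle whose height is $h_{\mathrm{crit}}$.

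\textbf{Step 2: the correspondence height and equidistribution.} Let $Z\subseteq\MPoly^d\times\MPoly^d$ be the graph of Frobenius modulo $p$, lifted to a correspondence. At the place $v\mid p$, the congruence from Step 1 gives a local intersection estimate. Combined with the archimedean and other local contributions, it yields the following dichotomy. Either the Zariski closure of a generic sequence of small points lies in a subvariety where $\overline{L}$ restricted to the graph has vanishing self-intersection, or we get a positive lower bound $\epsilon_{K,d}$ on $h_{\mathrm{crit}}$. Following Ji--Song--Xie, I would argue by contradiction. Take a generic sequence $\alpha_n$ with $h_{\mathrm{crit}}(\alpha_n)\to0$ whose Zariski closure is a positive-dimensional subvariety $V$, and apply Yuan's equidistribution at $v$ to $V$ and to $\sigma(V)$. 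The congruence forces the bifurcation measures of $V$ and $\sigma V$ to be related by the Frobenius pushforward. This identifies $\sigma V$ with the image of $V$ under a nontrivial correspondence preserving bifurcation measures. Such a correspondence means $V$ is intertwined with its Frobenius twist, and the twist has a different degree.

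\textbf{Step 3: finiteness via intertwined classes.} The relation from Step 2 says that the $p$-power Frobenius twist of points of $V$ is intertwined with (or dynamically related to) their Galois conjugate. The uniform finiteness of intertwined classes in $\MPoly^d$, assumed from earlier in the paper, bounds how many polynomials can be intertwined with a given one, uniformly in $\alpha$. Choosing $p$ large relative to this uniform bound and to $d$ then rules out positive-dimensional $V$. A degree count finishes the argument: the Frobenius twist multiplies the Lyapunov exponent or degree data by $p$, which is incompatible with bounded intertwining. So small points are not Zariski dense in any positive-dimensional subvariety. By Noetherian induction and Northcott on the zero-dimensional pieces, the small points are contained in a finite set. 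Shrinking $\epsilon_{K,d}$ below the minimum of the positive heights in this finite set gives the equality of the two sets. In particular, finitely many PCF points lie in $\MPoly^d(\Kab)$.

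\textbf{Main obstacle.} The hardest step is Step 2. There the local ramification estimate at $v$ must be converted into a genuine rigidity statement: the equidistributed bifurcation measures must force an algebraic intertwining relation between $V$ and its Frobenius twist. This requires controlling the ramification of $\Kab/K$ at $p$ (wild ramification in the $p$-part). It also requires checking that the resulting correspondence is non-degenerate, so that the intertwining finiteness theorem applies. To make the estimate uniform, I would first try to restrict to places where $\Kab$ has bounded ramification, using Amoroso--Zannier's trick of splitting $\sigma$ into an unramified Frobenius part and an inertia part of bounded order.
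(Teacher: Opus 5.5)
Your proposal has two genuine gaps.

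\textbf{The wild case is missing.} A single Frobenius lift $\sigma$ gives no uniform congruence. If $\alpha$ is defined over a local field $E_w/\Q_p$ with ramification index $e$, you only get $|\sigma(z)-z^p|_w\leq p^{-1/e}$. But $\Kab$ contains points with arbitrarily large wild ramification at $p$, e.g.\ points of $\Q(\zeta_{p^n})\subseteq\Kab$. So the pairs $(\alpha,\sigma\alpha)$ need not lie in any fixed closed tube around the Frobenius graph, and equidistribution at $v$ then says nothing.

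The paper splits the small points according to whether $L(a)_{\bar v}/\Q_p$ is tame or wild:
\begin{itemize}
\item For tame abelian points the ramification index divides $p-1$ (Lemma~\ref{lem:in}(2)). This puts $(x,\sigma x)$ in the tube $T_p(p^{-1/(p-1)})$.
\item For wild points it uses an order-$p$ inertia element $\tau_E$ with $|\tau_E(z)-z|\leq p^{-1/(p-1)}$ (Lemma~\ref{lem:in}(1)), so $(x,\tau_E x)$ lies in a closed diagonal tube. The contradiction is that $a,\tau_E a,\ldots,\tau_E^{p-1}a$ are $p>M_d$ pairwise distinct intertwined points, the intertwining coming from the mechanism described below.
\end{itemize}
The wild case is where the uniform bound $M_d$ and the choice $p>M_d$ actually enter. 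It also needs $L(x)=L(\varphi(x))$, so that $\tau_E$ has an orbit of size $p$ on $a$. You mention an Amoroso--Zannier splitting only as something to try. Your Step~3 instead invokes $M_d$ in the Frobenius setting, where bounded class size alone gives no contradiction, since $\sigma$ may act on $L(a)$ with small order.

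\textbf{The rigidity step has no mechanism.} Your Step~2 asserts that a correspondence on $V$ compatible with bifurcation measures forces intertwining. No such rigidity statement is available for a general $V\subseteq\MPoly^d$; the Favre--Gauthier theorem used for Theorem~\ref{thm:pair} concerns one-parameter families with a single marked point. The Frobenius correspondence does not preserve measures in any case. What the tube argument (Proposition~\ref{prop:tube}) actually yields is that each component $C$ of the small-point closure dominating both factors is bi-finite with $\pi_2^*A\equiv p\,\pi_1^*A$. Note also that $\sigma V=V$, since $V$ is defined over $L$; the relevant object is the closure of $\{(\alpha,\sigma\alpha)\}$ in $V\times V$, which is not a graph.

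The missing idea is to pass to dynamics over $V$, which the paper does as follows:
\begin{itemize}
\item It lifts to the universal critical-divisor family $X_d\to\MPoly^d$ with fiberwise map $F_d$.
\item The orbit-closure lemma and Thurston rigidity give an $F_d^{\circ l}$-invariant $Y$ over $V$ of relative dimension $s>0$ with $\deg G=d^{sl}$.
\item It builds a nef adelic line bundle on $\overline{Y}$ with $h(G(x))=d^lh(x)$ and works with $(G\times G)$-invariant small-point sets.
\item Lemma~\ref{lem:base}, via the relative degree formula and the Ghioca--Nguyen product decomposition of invariant subvarieties of split polynomial maps, then shows that such a correspondence is generically intertwined on the base.
\item This gives $\pi_1^*U=\pi_2^*U$ for the nonconstant $U=\varphi^*u_V$ built from the finite intertwined classes.
\item Lemma~\ref{lem:inv}, which intersects the pole divisor of $U$ with powers of $A$, shows this is incompatible with $\pi_2^*A\equiv p\,\pi_1^*A$.
\end{itemize}
Your ``degree count'' points at this last step, but it supplies neither the invariant function nor any mechanism that produces the intertwining.
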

More precisely, Theorem~\ref{thm:main} shows the strong Bogomolov property for $h_{\mathrm{crit}}$ on $\MPoly^d$ over $\Kab$: the zero-height locus is finite, and the positive values are bounded away from zero (cf.~\cite{FM15}).

Theorem~\ref{thm:main} immediately implies the following consequence.
\begin{theorem}\label{thm:pcf}
Let $d\geq2$ be an integer and $K$ be a number field. Then
\[
\sP_d(\Kab):=\left\lbrace[f]\in\MPoly^d(\Kab)\colon f\text{ is PCF}\right\rbrace
\]
is finite.
\end{theorem}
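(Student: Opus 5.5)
Theorem~\ref{thm:pcf} follows directly from Theorem~\ref{thm:main}, so the plan is a short deduction from that result together with the characterization of zero critical height recalled above.

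First, I identify $\sP_d(\Kab)$ with the zero-height locus. Let $[f]\in\MPoly^d(\Kab)$, with $f\in\olQ[z]$ of degree $d$. By definition,
\[
h_{\mathrm{crit}}(f)=\sum_{c\in\Crit_{\mathrm{fin}}(f)}(\deg_f(c)-1)\hat{h}_f(c).
\]
Each summand is nonnegative, and each weight $\deg_f(c)-1$ is at least $1$ for $c\in\Crit_{\mathrm{fin}}(f)$. So $h_{\mathrm{crit}}(f)=0$ exactly when $\hat{h}_f(c)=0$ for every finite critical point $c$. By \cite[Theorem~3.22]{Silverman2007}, this holds exactly when every finite critical point is $f$-preperiodic. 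Since $\infty$ is a fixed critical point of any polynomial, this is exactly the condition that $f$ is PCF.

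Both conditions are invariant under affine conjugacy: $h_{\mathrm{crit}}$ descends to $\MPoly^d$, and conjugation carries critical orbits to critical orbits. Hence
\[
\sP_d(\Kab)=\left\lbrace\alpha\in\MPoly^d(\Kab)\colon h_{\mathrm{crit}}(\alpha)=0\right\rbrace.
\]

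Second, Theorem~\ref{thm:main} provides $\epsilon_{K,d}>0$ for which this zero-height set is finite, and equals the set of points of height $<\epsilon_{K,d}$. Therefore $\sP_d(\Kab)$ is finite.

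The only point needing care is the class/representative issue. Membership in $\MPoly^d(\Kab)$ does not guarantee a representative $f$ with coefficients in $\Kab$. However, both the PCF property and the vanishing of $h_{\mathrm{crit}}$ can be checked on any representative over $\olQ$, so the identification above is independent of this choice. All substantive difficulty lies in Theorem~\ref{thm:main} itself.
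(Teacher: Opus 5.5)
Your proposal is correct and is essentially the paper's argument: the paper notes in the introduction that $h_{\mathrm{crit}}(f)=0$ if and only if $f$ is PCF, and then states that Theorem~\ref{thm:main} immediately implies Theorem~\ref{thm:pcf}. Your remarks on the positive weights $\deg_f(c)-1\geq1$ and on choosing a representative over $\olQ$ just make that immediate deduction explicit.
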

A key obstruction is that the extension $\Kab/\Q$ is infinite. Ingram proves that $h_{\mathrm{crit}}$ is comparable to an ample Weil height on $\MPoly^d$ \cite[Theorem~1.1]{Ingram2012}. Northcott's theorem then yields finiteness for PCF polynomial classes of bounded degree $[\Q([f]):\Q]\leq B$; however, this argument does not apply to the infinite extension $\Kab$.

\subsection{Polynomial families with one marked point}
The methods developed in this paper also apply to one-parameter polynomial families with a marked point.
\begin{theorem}\label{thm:pair}
Let $d\geq2$ be an integer and $B$ be a geometrically integral affine curve defined over a number field $K$. Let
\[
f:B\times\A^1\to B\times\A^1,\,(t,x)\mapsto(t,f_t(x))
\]
be a family of degree-$d$ polynomials parametrized by $B$, defined over $K$, such that the induced morphism $\varphi:B_{\olQ}\to\MPoly^d_{\olQ},\,t\mapsto[f_t]$ is nonconstant. Let $a$ be a marked point of $f$ defined over $K$ which is not $f$-preperiodic; that is, $a\in K[B]=\Gamma(B,\sO_B)$ satisfies
\[
f^{\circ m}(a)\neq f^{\circ n}(a)\quad\text{in }K[B]
\]
for all integers $m>n\geq0$. Then there exists a constant $\epsilon=\epsilon(K,B,f,a)>0$ such that
\[
\left\lbrace t\in B(\Kab)\colon\hat{h}_{f_t}(a(t))<\epsilon\right\rbrace=\left\lbrace t\in B(\Kab)\colon\hat{h}_{f_t}(a(t))=0\right\rbrace
\]
is finite. In particular,
\[
\left\lbrace t\in B(\Kab)\colon a(t)\text{ is }f_t\text{-preperiodic}\right\rbrace
\]
is finite.
\end{theorem}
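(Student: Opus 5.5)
We prove Theorem~\ref{thm:pair} by adapting the correspondence method of Ji--Song--Xie, the same route as for Theorem~\ref{thm:main}, with the curve $B$ in place of $\MPoly^d$ and the single marked point $a$ in place of the critical points. The key device is a Galois element acting as a Frobenius-type correspondence on $\Kab$-points.

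\textbf{Height and equidistribution.} The function $t\mapsto\hat{h}_{f_t}(a(t))$ is the height attached to an adelic line bundle $\overline{L}_a$ on a compactification $\overline{B}$, obtained as the limit of $(f^{\circ n}(a))^*\sO(1)$ normalized by $d^{-n}$. Since $\varphi$ is nonconstant and $a$ is not preperiodic, $\overline{L}_a$ has positive degree on $\overline{B}$: the bifurcation measure $\mu_a=\Delta_t\,g_{f_t}(a(t))$ is nonzero. Here $g_{f_t}$ is the escape-rate (Green) function of $f_t$ at an archimedean place, and $\Delta_t$ the Laplacian in $t$. Nonvanishing uses the standard polynomial dynamical-pair results: a non-isotrivial pair with a non-preperiodic marked point is active somewhere (Dujardin--Favre, DeMarco). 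Suppose, for contradiction, that there is an infinite sequence $t_n\in B(\Kab)$ with $\hat{h}_{f_{t_n}}(a(t_n))\to0$. By Yuan's arithmetic equidistribution, the Galois orbits of $t_n$ equidistribute to $\mu_a/\mu_a(B)$ at every place.

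\textbf{Correspondence and ramification.} Choose a prime $\mathfrak{p}$ of $K$ of good reduction for $(B,f,a)$, with residue field of size $q$. Fix $\sigma\in\Gal(\Kab/K)$ restricting to Frobenius at $\mathfrak{p}$ on the part of $\Kab$ unramified at $\mathfrak{p}$. By local class field theory, for points of $\Kab$ the element $\sigma$ (or a suitable combination $\sigma-[\mathfrak{p}]$ coming from the Artin map) satisfies the following. For every $\alpha\in\Kab$ integral at primes above $\mathfrak{p}$, $\sigma(\alpha)\equiv\alpha^q$ modulo primes above $\mathfrak{p}$, up to bounded ramification at $\mathfrak{p}$. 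This is the local ramification estimate: in $\Kab$ the ramification at $\mathfrak{p}$ is controlled by Lubin--Tate theory, so after replacing $\sigma$ by a suitable power one gets an honest congruence. It follows that the pairs $(t_n,\sigma(t_n))$ lie $\mathfrak{p}$-adically close to the Frobenius correspondence $\Gamma_q\subset B\times B$. Running the same argument at two different primes $\mathfrak{p}_1,\mathfrak{p}_2$ with $\sigma_i$ commuting (as in Ji--Song--Xie), we obtain small points of the correspondence. This gives a small-height sequence on $B\times B$ for the product pair whose limit lies on a curve $Z\subset B\times B$. In the Ji--Song--Xie setup, the incidence with $\Gamma_q$ forces the height of the intersection to be bounded by $q$ times the height. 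The arithmetic Hodge index and equidistribution then force $Z$ to be special: the pairs $(f_t,a(t))$ and $(f_{s},a(s))$ along $Z$ must be dynamically intertwined, meaning the measures $\mu_a$ pulled back along both projections of $Z$ agree.

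\textbf{Rigidity and the main obstacle.} We conclude by showing that such intertwining is impossible in a non-isotrivial family with a non-preperiodic marked point, unless $Z$ is a graph of a finite-order automorphism of the pair. Here we invoke the uniform finiteness of intertwined classes in $\MPoly^d$, projected via $\varphi$, together with rigidity for marked points (Favre--Gauthier). This gives a contradiction with $\sigma$ being of infinite order with growing degree $q$. The main obstacle is making the local ramification estimate uniform. The points $t_n$ lie in fields with unbounded ramification at $\mathfrak{p}$, so the comparison between $\sigma$ and the $q$-power map only holds up to an error that must be absorbed. The first thing we would try is to take $\mathfrak{p}$ totally split in a fixed extension and use the Lubin--Tate description: $\Kab_{\mathfrak{p}}$ is generated by $K^{\mathrm{ur}}_{\mathfrak{p}}$ and torsion of a formal group. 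Then $\sigma$ acting through $[\pi]$ on the torsion gives $\sigma(x)\equiv x^q$ modulo $\pi$ on integers. This yields a positive lower bound on $\log|\sigma(t)-t^q|$ contributions, which contradicts the smallness of the heights.
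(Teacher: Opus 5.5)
\textbf{There is a genuine gap in the local step.} For a Frobenius lift $\sigma$ and $z$ integral in $E_w$, with $E=K(t)$, you only get $\sigma(z)-z^q\in\mathfrak m_{E_w}$. That is, $|\sigma(z)-z^q|_w\leq|\pi|^{1/e(E_w/K_{\mathfrak p})}$, and the ramification index $e$ is unbounded on $\Kab$.

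No choice of $\sigma$, and no power of it, repairs this. Over $K_{\mathfrak p}=\Q_p$, any lift of Frobenius sends $\zeta_{p^m}$ to $\zeta_{p^m}^u$ with $u\in\Z_p^\times$. Since $u-p$ is a unit, $|\sigma(\zeta_{p^m})-\zeta_{p^m}^p|_p=|\zeta_{p^m}-1|_p=p^{-1/(p^{m-1}(p-1))}\to1$. So on wildly ramified points there is no closed Frobenius tube of radius $\delta<1$ containing the pairs $(t_n,\sigma(t_n))$. Without such a uniform tube, the equidistribution/model-measure argument cannot force the limit correspondence into the Frobenius graph of the special fibre.

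Your Lubin--Tate fix is not correct:
\begin{itemize}
\item $[\pi]$ is not a field automorphism, since it kills the $\pi$-torsion.
\item The Artin symbol of $\pi$ fixes the Lubin--Tate torsion points.
\item $[\pi](X)\equiv X^q \bmod \pi$ is a congruence for the formal-group endomorphism, not for any Galois element.
\end{itemize}
The two-prime device and the ``lower bound on $\log|\sigma(t)-t^q|$'' do not address this, and your endgame (``contradiction with $\sigma$ of infinite order'') does not identify an actual contradiction.

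\textbf{The missing idea is to split the sequence by ramification and use a different Galois element in the wild case.} Let $N_0$ bound the size of the classes of $t\sim t'\iff f_t\approx f_{t'}$ on $B$. This is where uniform finiteness of intertwined classes in $\MPoly^d$ enters, combined with the bounded fibres of $\varphi$. Choose a prime $p>\max\{2,N_0\}$ split completely in $K$, and a good-reduction place $v$ above it, so $K_v=\Q_p$. Then pass to a subsequence on which all $K(t_n)_{\bar v}/\Q_p$ are tame, or all are wild.

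\emph{Tame case.} Abelian tame extensions of $\Q_p$ have ramification index dividing $p-1$, so $(t,\sigma(t))\in T_p(p^{-1/(p-1)})$. The tube argument then gives an irreducible bi-finite correspondence with $\pi_2^*H\equiv p\,\pi_1^*H$, hence $\deg\pi_2=p\deg\pi_1$. Separately, equidistribution yields a proportionality of the two height functions along it, and Favre--Gauthier then gives intertwining. Intertwining produces a nonconstant $u$ with $\pi_1^*u=\pi_2^*u$, which forces $\deg\pi_1=\deg\pi_2$, a contradiction.

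\emph{Wild case.} Local Kronecker--Weber yields an order-$p$ inertia element $\tau$, coming from $\Gal(F(\zeta_{p^m})/F(\zeta_{p^{m-1}}))$, with $|\tau(z)-z|_p\leq p^{-1/(p-1)}$ on integers. So the pairs $(t,\tau(t))$ lie in the \emph{diagonal} tube $T_1(\delta)$. The resulting correspondence consists generically of intertwined pairs. Hence $t,\tau(t),\dots,\tau^{p-1}(t)$ are $p$ distinct points in one $\sim$-class, contradicting $p>N_0$.

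In your sketch, the finiteness of intertwined classes is attached to the Frobenius case, where it gives nothing. It is exactly what disposes of the wild case.
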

We present a simple example illustrating Theorem~\ref{thm:pair}.
\begin{example}
Let $d=2$ and $B=\A^1$. Define $f_t(z)=z^2+t$ and $a(t)=t+1$ for $t\in B$, giving a family $f$ of degree-$2$ polynomials parametrized by $B$ with a marked point $a$. The induced morphism $B\to\MPoly^2$ is an isomorphism. Since $\deg_t f_t^{\circ n}(a(t))=2^n$, the marked point $a$ is not $f$-preperiodic. By Theorem~\ref{thm:pair}, for every number field $K$, the set
\[
\left\lbrace t\in\Kab\colon t+1\text{ is }f_t\text{-preperiodic}\right\rbrace
\]
is finite. For $t=-3\in\Q^{\mathrm{ab}}$, the polynomial $f_{-3}(z)=z^2-3$ is not PCF and $a(-3)=-2$ is $f_{-3}$-periodic. So the finiteness does not follow directly from Theorem~\ref{thm:main}.
\end{example}

\subsection{Notation and conventions}
(1) For $n\geq0$ and a map $f:X\to X$, we write $f^{\circ n}$ for the $n$-th iterate, with $f^{\circ0}=\id_X$.

\smallskip

(2) For a number field $L$, $M_L$ denotes the set of places of $L$. For a subset $S\subseteq M_L$, we write $\sO_{L,S}$ for the ring of $S$-integers in $L$. For a place $v\in M_L$, let $\C_v$ be the completion of an algebraic closure of $L_v$. Absolute values extend the usual complex absolute value or the $p$-adic absolute value normalized by $|p|_p=p^{-1}$. For an algebraic extension $E/\Q$, let $\sO_E$ denote the integral closure of $\Z$ in $E$.

\smallskip

(3) We follow \cite{YZ26} for adelic line bundles and their positivity, with all heights normalized over $\Q$.

\smallskip

(4) A variety is a reduced separated scheme of finite type over a field. All subvarieties are closed and reduced. Numerical equivalence of line bundles, denoted $L_1\equiv L_2$, means that $\deg(L_1|_C)=\deg(L_2|_C)$ for every integral projective curve $C$ after base change to an algebraically closed field.

\smallskip

(5) A \emph{bi-finite correspondence} on an integral variety $X$ is a reduced closed subset of $X\times X$ whose irreducible components are finite and surjective over $X$ under both projections.

\subsection{Strategy of the proof}\label{sec:skt}
We give a detailed sketch of the proof of Theorem~\ref{thm:main}. The main new ingredient is combining the intertwined relation (\S~\ref{sec:int}) with separate ramification estimates for the wild and tame cases (Lemma~\ref{lem:in}).

Suppose that the conclusion of Theorem~\ref{thm:main} fails for some number field $K$ and some $d\geq2$. Then there is a sequence $(a_j)_{j=1}^\infty\subseteq\MPoly^d(\Kab)$ of pairwise distinct points with
\[
\lim_{j\to\infty}h_{\mathrm{crit}}(a_j)=0.
\]
After replacing $(a_j)_{j=1}^\infty$ with a subsequence and passing to a finite extension $L/K$, there is a positive-dimensional geometrically integral subvariety $V\subseteq\MPoly^d_L$ such that the sequence $(a_j)_{j=1}^\infty$ is \emph{generic} in $V/\olQ$; that is, every proper closed subset of $V_{\olQ}$ contains only finitely many terms of $(a_j)_{j=1}^\infty$. Since $\Kab\subseteq\Lab$, all $a_j$ belong to $V(\Lab)$.

The first step is to encode the critical-orbit data in a universal family. In Proposition~\ref{prop:sys}, we construct an affine variety $X_d$, a morphism $p_d:X_d\to\MPoly^d$ with a section $s_d$, and a finite endomorphism $F_d:X_d\to X_d$ with $p_d\circ F_d=p_d$ such that
\[
\alpha\text{ is PCF}\ \Longleftrightarrow\ s_d(\alpha)\text{ is }F_d\text{-preperiodic}\quad(\alpha\in\MPoly^d(\C)).
\]
Applying the Noetherian orbit-closure lemma (Lemma~\ref{lem:orb}) to the restriction over $V$, there are two cases. In the first case, there are integers $b>a\geq0$ such that
\[
F_d^{\circ a}\circ s_d=F_d^{\circ b}\circ s_d
\]
on $V$; then every point of $V(\C)$ is PCF, contradicting Thurston's rigidity. Thus, we are in the second case: there are an irreducible component $Y_0$ of the stabilized orbit closure and an integer $l\geq1$ such that
\[
p_d(Y_0)=V,\quad\dim Y_0>\dim V,\quad\text{and}\quad F_d^{\circ l}(Y_0)=Y_0.
\]
Let $\nu:Y\to Y_0$ be the normalization. The endomorphism $F_d^{\circ l}|_{Y_0}$ lifts to a finite endomorphism $G:Y\to Y$. Setting $\varphi=p_d|_{Y_0}\circ\nu:Y\to V$, one has $\varphi\circ G=\varphi$ and $s:=\dim Y-\dim V>0$. The relative-degree formula of Proposition~\ref{prop:deg} shows $\deg G=d^{sl}$.

In Theorem~\ref{thm:met}, we construct a nef adelic line bundle $\overline{A}$ on a normal projective compactification $\overline{X}_d\supseteq X_d$, with $A$ ample. For all $\alpha\in\MPoly^d(\olQ)$ and $n,m\geq0$, one has
\[
k_d d^{n+m}h_{\mathrm{crit}}(\alpha)\geq h_{\overline{A}}((F_d^{\circ(n+m)}\circ s_d)(\alpha))=d^m h_{\overline{A}}((F_d^{\circ n}\circ s_d)(\alpha))\geq0.
\]
So $h_{\overline{A}}(F_d^{\circ n}(s_d(\alpha)))=0$ for every PCF $\alpha\in\MPoly^d(\olQ)$. After pullback, we obtain a nef adelic line bundle $\overline{A}_Y$ on a normal projective compactification $\overline{Y}\supseteq Y$, with $A_Y$ ample. By Proposition~\ref{prop:orb}, for every Galois-invariant subset $P\subseteq V(\Lab)$ with
\[
\overline{\{a\in P\colon h_{\mathrm{crit}}(a)<\epsilon\}}=V_{\olQ},\quad\forall\epsilon>0,
\]
there is a Galois-invariant subset $Q_P\subseteq Y(\Lab)\cap\varphi^{-1}(P)$ satisfying
\[
G(Q_P)\subseteq Q_P,\quad L(x)=L(\varphi(x)),\quad h_{\overline{A}_Y}(G(x))=d^lh_{\overline{A}_Y}(x)\quad(x\in Q_P),
\]
and $\{x\in Q_P\colon h_{\overline{A}_Y}(x)<\epsilon\}$ is Zariski dense in $Y_{\olQ}$ for all $\epsilon>0$.

The next ingredient is the intertwined equivalence relation $\approx$ on $\MPoly^d(\C)$, defined in \S~\ref{sec:int}. This relation is a constructible subset of $(\MPoly^d)^2$ defined over $\Q$; see Proposition~\ref{prop:rel}(1). There is an integer $M_d$ such that every intertwined class in $\MPoly^d(\C)$ has cardinality $\leq M_d$; see \cite[Theorem~3.46]{FG22} and Proposition~\ref{prop:Md}. By Proposition~\ref{prop:rel}(2), there exists a nonconstant rational function $u_V\in L(V)\setminus L$ such that for every irreducible component $Z\subseteq V\times_L V$ of the Zariski closure of the intertwined relation on $V$ whose projections $p_1,p_2:Z\to V$ are dominant, one has
\[
p_1^*u_V=p_2^*u_V\quad\text{in }L(Z).
\]

The local correspondence step adapts ideas from Ji--Song--Xie \cite[\S~6]{JSX26}. Proposition~\ref{prop:tube} is the form needed here for Galois-invariant sets of small points contained in a fixed closed diagonal or Frobenius tube. The proof of Proposition~\ref{prop:tube} combines Yuan's equidistribution theorem \cite{Yuan2008} with the Chambert-Loir model measure \cite{CL2006} to force the special fiber into the diagonal or Frobenius graph. For a Galois-invariant subset $B\subseteq\overline{Y}(\olQ)^2$, define
\[
B_\epsilon=\left\lbrace(x,y)\in B\colon h_{\overline{A}_Y}(x)+h_{\overline{A}_Y}(y)<\epsilon\right\rbrace\quad(\epsilon>0)\quad\text{and}\quad Z(B)=\bigcap_{n=1}^\infty\overline{B_{1/n}}.
\]
Suppose that $B$ lies in a fixed closed diagonal or Frobenius tube. By Proposition~\ref{prop:tube}, every irreducible component $C$ of $Z(B)$ dominating both factors is bi-finite. Lemma~\ref{lem:base} connects the intertwined relation for parameters with bi-finite correspondences on $\overline{Y}$. The proof of Lemma~\ref{lem:base} uses Proposition~\ref{prop:deg} and the invariant-subvariety decomposition for split polynomial endomorphisms \cite[Proposition~5.3(i)]{GN17} (see also \cite{MS14}).

We continue the proof. By the Chebotarev density theorem, take a rational prime $p>\max\{2,M_d\}$ splitting completely in $L$ and a place $v\in M_L$ above $p$ at which $\overline{A}_Y$ is induced by a relatively ample model with geometrically integral special fiber. Then $L_v=\Q_p$. Fix an extension $\bar{v}$ of $v$ to $\olQ$. Set $\delta=p^{-1/(p-1)}<1$. Define
\begin{align*}
J_{\mathrm{w}}&=\left\lbrace j\in\Z_{>0}\colon L(a_j)_{\bar{v}}/\Q_p\text{ is wildly ramified}\right\rbrace,\\
J_{\mathrm{t}}&=\left\lbrace j\in\Z_{>0}\colon L(a_j)_{\bar{v}}/\Q_p\text{ is tamely ramified}\right\rbrace=\Z_{>0}\setminus J_{\mathrm{w}}.
\end{align*}
Then at least one of $J_{\mathrm{w}}$ and $J_{\mathrm{t}}$ is infinite. Define $P_{\mathrm{w}}=\Gal(\Lab/L)\cdot\{a_j\colon j\in J_{\mathrm{w}}\}$ and $P_{\mathrm{t}}=\Gal(\Lab/L)\cdot\{a_j\colon j\in J_{\mathrm{t}}\}$.

(1) Suppose that $J_{\mathrm{w}}$ is infinite.

For $x\in Q_{P_{\mathrm{w}}}$ with $E=L(x)=L(\varphi(x))$, Lemma~\ref{lem:in}(1) produces an order-$p$ inertia element $\tau_E\in I_{\bar{v}}(E/L)\subseteq\Gal(E/L)$ satisfying
\[
|\tau_E(z)-z|_p\leq\delta\quad(z\in\sO_{E_{\bar{v}}}).
\]
Then
\[
B_{\mathrm{In}}=\left\lbrace(x,\tau_{L(x)}(x))\colon x\in Q_{P_{\mathrm{w}}}\right\rbrace
\]
is contained in the closed diagonal tube $T_1(\delta)$ (see \S~\ref{sec:tube}), with $(G\times G)(B_{\mathrm{In}})\subseteq B_{\mathrm{In}}$. Using the fact that $\Gal(\Lab/L)$ is abelian, we deduce the Galois-invariance of $B_{\mathrm{In}}$. Let $Z_{\mathrm{In}}=Z(B_{\mathrm{In}})$, which is Zariski closed in $\overline{Y}_{\olQ}^2$. Take a periodic cycle of components of $Z_{\mathrm{In}}$ that dominate both factors, and let $C_{\mathrm{In}}$ be their reduced union. Applying Proposition~\ref{prop:tube}, we see that $C_{\mathrm{In}}$ is a bi-finite correspondence on $\overline{Y}_{\olQ}$ satisfying
\[
(G\times G)(C_{\mathrm{In}}\cap Y^2_{\olQ})\subseteq C_{\mathrm{In}}\cap Y^2_{\olQ}.
\]
Using Lemma~\ref{lem:base}(3), we obtain a pair $(x,\tau_{L(x)}(x))\in C_{\mathrm{In}}$ such that
\[
\varphi(x),\tau_{L(x)}(\varphi(x)),\ldots,\tau_{L(x)}^{p-1}(\varphi(x))
\]
are $p$ distinct points in one intertwined class, contradicting $p>M_d$.

(2) Suppose that $J_{\mathrm{t}}$ is infinite.

Choose $\sigma$ in the decomposition group of $\olQ/L$ at $\bar{v}$ whose action on the residue field $\overline{\F}_p$ is $z\mapsto z^p$. As in (1), the set
\[
B_{\mathrm{Fr}}=\left\lbrace(x,\sigma(x))\colon x\in Q_{P_{\mathrm{t}}}\right\rbrace
\]
is Galois-invariant, with $(G\times G)(B_{\mathrm{Fr}})\subseteq B_{\mathrm{Fr}}$. Using Lemma~\ref{lem:in}(2), we obtain $B_{\mathrm{Fr}}\subseteq T_p(\delta)$, the closed tube around the Frobenius graph. As in (1), we construct a bi-finite correspondence $C_{\mathrm{Fr}}$ on $\overline{Y}_{\olQ}$. By the Frobenius case of Proposition~\ref{prop:tube}, for every irreducible component $C_i$ of $C_{\mathrm{Fr}}$ with normalization $W$ and the two projections $\pi_j:W\to C_i\hookrightarrow\overline{Y}^2_{\olQ}\to\overline{Y}_{\olQ}$ ($1\leq j\leq 2$), one has
\begin{equation}\label{eq:tame1}
\pi_2^*A_{Y,\olQ}\equiv p\,\pi_1^*A_{Y,\olQ}.
\end{equation}
On the other hand, Lemma~\ref{lem:base}(2) shows
\begin{equation}\label{eq:tame2}
\pi_1^*(U)=\pi_2^*(U),
\end{equation}
where $U=\varphi^*(u_V)\in L(\overline{Y})\setminus L$. Lemma~\ref{lem:inv} shows that \eqref{eq:tame1} and \eqref{eq:tame2} cannot hold simultaneously. We obtain a contradiction.

Both the wild and tame cases are impossible, completing the proof of Theorem~\ref{thm:main}.

\subsection{The cyclotomic case}\label{sec:cyc}
Let $\mu_\infty$ be the group of all roots of unity in $\C$, and let
\[
\Kc=K(\mu_\infty)\subseteq\Kab
\]
be the maximal cyclotomic extension of a number field $K$ in $\olQ$. Note that $\Q^{\mathrm{c}}=\Q^{\mathrm{ab}}$ by the Kronecker--Weber theorem \cite[Chapter~\Rom{5}, Theorem~1.10]{Neukirch1999}.

Clearly, Theorem~\ref{thm:pcf} implies the following finiteness result:

\begin{theorem}\label{thm:cyc}
For every number field $K$ and every integer $d\geq2$, the set
\[
\sP_d(\Kc):=\left\lbrace[f]\in\MPoly^d(\Kc)\colon f\text{ is PCF}\right\rbrace
\]
is finite.
\end{theorem}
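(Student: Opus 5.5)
The plan is to deduce Theorem~\ref{thm:cyc} directly from Theorem~\ref{thm:pcf}, using only that $\Kc$ sits inside $\Kab$ as subfields of $\olQ$. No new dynamics or height theory is needed.

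First, I will check the field inclusion $\Kc\subseteq\Kab$. For each $n\geq1$ the extension $K(\mu_n)/K$ is Galois, and its Galois group embeds into $(\Z/n\Z)^\times$ through the action on a primitive $n$-th root of unity, so it is abelian. Hence $K(\mu_n)\subseteq\Kab$ for every $n$. Since $\Kc=K(\mu_\infty)$ is the compositum $\bigcup_n K(\mu_n)$, it follows that $\Kc\subseteq\Kab$.

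Second, I will pass from fields to rational points. $\MPoly^d$ is an affine variety over $\Q$, and its $E$-points for a subfield $E\subseteq\olQ$ are the $\olQ$-points whose coordinates lie in $E$. This description is functorial in $E$, so $\MPoly^d(\Kc)\subseteq\MPoly^d(\Kab)$ inside $\MPoly^d(\olQ)$. Being PCF is a property of the class $[f]$ and does not depend on the field of definition, so $\sP_d(\Kc)\subseteq\sP_d(\Kab)$.

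Finally, $\sP_d(\Kab)$ is finite by Theorem~\ref{thm:pcf}, so the subset $\sP_d(\Kc)$ is finite as well.

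The only step needing care is the second. One should read ``$\alpha\in\MPoly^d(E)$'' as an $E$-point of the coarse moduli space, not as a class admitting a polynomial representative defined over $E$. With that reading, the inclusion of point sets is immediate, and this is the convention used in Theorems~\ref{thm:main} and~\ref{thm:pcf}. Under the representative-based reading the inclusion still holds trivially, since a representative over $\Kc$ is also one over $\Kab$. So I expect no real obstacle: the entire difficulty has already been absorbed into Theorem~\ref{thm:main}.
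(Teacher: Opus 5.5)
Your argument is correct and matches the paper's own deduction: the paper presents Theorem~\ref{thm:cyc} as an immediate consequence of Theorem~\ref{thm:pcf}, using $\Kc\subseteq\Kab$. The paper also sketches, in \S~\ref{sec:cyc}, a second proof via the cyclotomic integral-point rigidity theorem of \cite{JXZ2026}, which avoids Theorem~\ref{thm:main}, but that route is not needed for the statement.
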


The cyclotomic integral-point rigidity theorem \cite[Theorem~1.2]{JXZ2026} also gives a different proof of Theorem~\ref{thm:cyc}, which we sketch below.

Fix a closed immersion $\iota:X_d\hookrightarrow\A^N_\Q$. There are $D_{d,\iota}\in\Z_{>0}$ and $B_{d,\iota}\geq1$ such that every coordinate $\xi$ of
\[
\iota\left(F_d^{\circ n}(s_d(\alpha))\right)\quad(\text{PCF }\alpha\in\MPoly^d(\olQ),\ n\geq0)
\]
satisfies
\begin{equation}\label{eq:bd}
D_{d,\iota}\xi\in\sO_{\olQ}\quad\text{and}\quad C(\xi)\leq B_{d,\iota}.
\end{equation}
Here for $x\in\olQ$, its \emph{house} is
\[
C(x)=\max_{\tau:\Q(x)\hookrightarrow\C}|\tau(x)|.
\]
These estimates follow from uniform local bounds for monic centered PCF polynomials and critical orbits.

Suppose that $\#\sP_d(\Kc)=\infty$ for some number field $K$ and $d\geq2$. After a finite extension $L/K$ contained in $\Kc$, take a positive-dimensional geometrically integral subvariety $V\subseteq\MPoly^d_L$ such that
\[
P=\{\alpha\in V(\Lc)\colon\alpha\text{ is PCF}\}
\]
is Zariski dense in $V_{\olQ}$. Note that $\Lc=\Kc$. As in \S~\ref{sec:skt}, restrict the family of Proposition~\ref{prop:sys} to $X_V:=\left(X_d\times_{\MPoly^d}V\right)_{\mathrm{red}}$. Applying Lemma~\ref{lem:orb} and Thurston rigidity as in \S~\ref{sec:skt}, there are $l\geq1$ and an irreducible $Y\subseteq X_d$ satisfying
\[
p_d(Y)=V,\quad s:=\dim Y-\dim V>0,\quad\text{and}\quad F_d^{\circ l}(Y)=Y.
\]
Set $G=F_d^{\circ l}|_Y$ and $\varphi=p_d|_Y$. Using \eqref{eq:bd}, we obtain a Zariski-dense set
\[
Q\subseteq Y(\Lc)\cap \varphi^{-1}(P)
\]
satisfying $G(Q)\subseteq Q$ and the same common-denominator and house bounds. After a finite extension inside $\Lc$, assume that $Y/L$ is geometrically integral.

Applying the cyclotomic integral-point rigidity theorem \cite[Theorem~1.2]{JXZ2026}, we conclude that $(Y,G)$ is of \emph{monomial type} in the sense of \cite[Definition~1.1]{JXZ2026}: there are an integer $m\geq1$, a linear torus $T=\mathbb{G}_m^t$, an algebraic group endomorphism $g:T\to T$, and a dominant morphism $\Phi:T\to Y_{\olQ}$ such that
\[
G^{\circ m}\circ\Phi=\Phi\circ g.
\]
Replace $T$ by $g^{\circ n}(T)$ for $n$ sufficiently large. Then the restricted presentation still dominates $Y$, and $g$ is now an isogeny.

By \cite[Lemma~2.6]{JXZ2026}, after passing to a finite isogeny cover and lifting $g$, we may write
\[
T=T_0\times T_+\quad\text{and}\quad g=(g_0,g_+),
\]
where $g_0:T_0\to T_0$ and $g_+:T_+\to T_+$ are group endomorphisms of linear tori, every eigenvalue of (the exponent matrix of) $g_0$ is a root of unity, and $g_+$ has no root-of-unity eigenvalue. Torsion points are Zariski dense in $T_0$, and every torsion point is periodic under $g_0$. The absence of root-of-unity eigenvalues implies that
\[
\olQ(T_+)^{(g_+^{\circ q})^*}=\olQ\quad (q\geq1).
\]
Combining this with $\varphi\circ G=\varphi$, we deduce that
\[
\varphi\circ\Phi=h_0\circ\pr_0
\]
for a dominant morphism $h_0:T_0\to V$, where $\pr_0:T\to T_0$ is the first projection.

By Proposition~\ref{prop:deg}, we have $\deg G=d^{sl}>1$. Using this fact, we can show that there is a nonempty open subset of $T_0$ on which the slices
\[
\Phi_u:T_+\to\varphi^{-1}(h_0(u))
\]
are nonconstant. Since $h_0$ is dominant and there are only finitely many exceptional polynomial classes, take $u\in T_0(\olQ)_{\mathrm{tors}}$ in the nonconstant-slice locus with $v_0=h_0(u)$ non-exceptional; see \S~\ref{sec:int}. Take a monic centered representative $f$ of $v_0$. Let $q\geq1$ be the period of $u$ under $g_0$, and set $M=lmq$. Restricting the equivariance identity to $\{u\}\times T_+$ yields
the semiconjugacy:
\[
\Phi_u:T_+\to \Sym^{d-1}(\A^1)/\Gamma_f\quad\text{and}\quad\overline{F}_f^{\circ M}\circ\Phi_u=\Phi_u\circ g_+^{\circ q},
\]
where $\Gamma_f\subseteq\mu_{d-1}$ is the stabilizer of $f$ and $\overline{F}_f$ is induced by $\Sym^{d-1}(f)$. Pull back $\Sym^{d-1}(\A^1)\to\Sym^{d-1}(\A^1)/\Gamma_f$ along $\Phi_u$. Among the normalizations of the components dominating $T_+$, choose a periodic component for the induced finite map. After passing to an iterate, we obtain a finite Galois cover $\pi_1:T_1\to T_+$ and an equivariant nonconstant morphism $\Phi_1:T_1\to\Sym^{d-1}(\A^1)$, with $T_1$ normal. After a further iterate, by normalizing a component of the finite cover parametrizing all orderings of the distinct points in the generic divisor $\Phi_1(\eta_{T_1})$, we construct a finite Galois cover $\pi_2:W\to T_1$ and a finite endomorphism $H:W\to W$, with $W$ normal and $\pi\circ H=g_+^{\circ r}\circ\pi$ for some $r\geq1$, where $\pi=\pi_1\circ\pi_2$. We can prove that both $\pi_1$ and $\pi_2$ are \'etale, and hence $\pi$ is \'etale. Thus, $W$ is a torus after choosing an origin. After translating a fixed point of $H$ to the origin, $H$ is a group endomorphism. One of the ordered point functions $\psi:W\to\A^1$ (obtained in the construction of $\pi_2$) is nonconstant and satisfies
\[
f^{\circ k}\circ\psi=\psi\circ H
\]
for some $k\geq1$. Then $(\A^1,f)$ is of monomial type. By the classification in \cite[\S~1.3]{JXZ2026}, $f$ is exceptional, a contradiction.

\subsection{Rational map conjectures}
Fix an integer $d\geq2$. Let $\Rat_d$ be the space of endomorphisms of $\P^1$ of degree $d$, which is a smooth affine variety of dimension $2d+1$; see \cite{Silverman2012}. The group $\PGL_2=\operatorname{Aut}(\P^1)$ acts on $\Rat_d$ by conjugacy. The geometric quotient
\[\sM_d:=\Rat_d/\PGL_2=\Spec\left(\sO(\Rat_d)^{\PGL_2}\right)\]
is the (coarse) \emph{moduli space of endomorphisms of degree $d$}, which is an affine variety of dimension $2d-2$; see \cite[Theorem~4.36~(c)]{Silverman2007} and \cite[Chapter~2]{Silverman2012}.

We recall the exceptional rational maps, namely those arising from endomorphisms of elliptic curves or algebraic tori.
\begin{definition}\label{def:excep}
Let $f:\P^1_\C\to\P^1_\C$ be an endomorphism of degree $d\geq2$.
\begin{itemize}
\item The map $f$ is called a \emph{Latt\`es map} if there are an elliptic curve $E$, an endomorphism $A:E\to E$, and a nonconstant morphism $\pi:E\to\P^1_\C$ such that $f\circ\pi=\pi\circ A$. The map $f$ is called \emph{flexible Latt\`es} if there are an elliptic curve $E$, an integer $m\in\Z\setminus\{0,\pm1\}$, and a finite morphism $\pi:E\to\P^1_\C$ of degree $2$ such that $f\circ\pi=\pi\circ[m]$. A non-flexible Latt\`es map is called \emph{rigid Latt\`es}.
\item For rational maps on $\P^1$, we use the following one-dimensional convention: $f$ is of \emph{monomial type} if there are $n\in\{\pm d\}$ and a nonconstant morphism $\pi:\P^1_\C\to\P^1_\C$ such that $f\circ\pi=\pi\circ z^n$. It is well known that $f$ is of monomial type if and only if it is conjugate to $z^{\pm d}$ or $\pm T_d(z)$, where $T_d(z)$ is the \emph{(normalized) Chebyshev polynomial of degree $d$}, i.e., the unique monic polynomial $T_d\in\C[z]$ of degree $d$ such that $T_d(z+z^{-1})=z^d+z^{-d}$. See \cite[Chapter~6]{Silverman2007}, \cite{Milnor2006}, and \cite[\S~1.3]{JXZ2026}.
\item $f$ is called \emph{exceptional} if it is Latt\`es or of monomial type.
\end{itemize}
The above notions depend only on the conjugacy class of $f$. Moreover, it is well known that $f$ is exceptional if and only if $f^{\circ k}$ is exceptional for some (equivalently, every) $k\in\Z_{>0}$.
\end{definition}
All exceptional maps are PCF.

Let $\FL_d\subseteq\Rat_d$ and $[\FL_d]\subseteq\sM_d$ be the flexible Latt\`es loci, which are Zariski closed. The locus $[\FL_d]$ is nonempty if and only if $d$ is a perfect square in $\Z$, and in this case $[\FL_d]$ is of pure dimension $1$.

\medskip

Since the critical height is invariant under conjugacy, we obtain the critical height function on $\sM_d$:
\[
h_{\mathrm{crit}}:\sM_d(\olQ)\to\R_{\geq0},\,[f]\mapsto h_{\mathrm{crit}}(f).
\]
Since there are only finitely many rigid Latt\`es conjugacy classes in $\sM_d(\C)$ (cf.~\cite[\S~5]{Milnor2006}), by Ingram \cite{Ingram2018} we conclude that the critical height $h_{\mathrm{crit}}$ on $\sM_d$ is comparable to an ample moduli height away from $[\FL_d]$. As in the polynomial case, Northcott's theorem then implies finiteness for PCF map classes in $\sM_d(\olQ)\setminus[\FL_d]$ of bounded degree $[\Q([f]):\Q]\leq B$.

Motivated by Theorems \ref{thm:main}, \ref{thm:pcf}, and \ref{thm:cyc}, we propose the following conjectures.
\begin{conjecture}\label{conj:rbog}
For every number field $K$ and every integer $d\geq2$, there exists a constant $\epsilon_{K,d}>0$ such that the set
\[
\left\lbrace\alpha\in\sM_d(\Kab)\setminus[\FL_d]\colon h_{\mathrm{crit}}(\alpha)<\epsilon_{K,d}\right\rbrace=\left\lbrace\alpha\in\sM_d(\Kab)\setminus[\FL_d]\colon h_{\mathrm{crit}}(\alpha)=0\right\rbrace
\]
is finite.
\end{conjecture}
\begin{conjecture}\label{conj:rab}
For every number field $K$ and every integer $d\geq2$, the set
\[
\sR_d(\Kab):=\left\lbrace[f]\in\sM_d(\Kab)\setminus[\FL_d]\colon f\text{ is PCF}\right\rbrace
\]
is finite.
\end{conjecture}
\begin{conjecture}\label{conj:rcyc}
For every number field $K$ and every integer $d\geq2$, the set
\[
\sR_d(\Kc):=\left\lbrace[f]\in\sM_d(\Kc)\setminus[\FL_d]\colon f\text{ is PCF}\right\rbrace
\]
is finite.
\end{conjecture}
Clearly, one has:
\begin{center}
Conjecture~\ref{conj:rbog}\ $\Longrightarrow$\ Conjecture~\ref{conj:rab}\ $\Longrightarrow$\ Conjecture~\ref{conj:rcyc}.
\end{center}
For $K=\Q$, Conjecture~\ref{conj:rab} and Conjecture~\ref{conj:rcyc} coincide.

When $d$ is a square, there are infinitely many flexible Latt\`es classes of degree $d$ admitting representatives over $\Q$; all of these classes are PCF. Thus, it is necessary to exclude $[\FL_d]$ in all three conjectures.

\subsection*{Organization}
In \S~\ref{sec:poly}, we construct the universal critical-divisor family, a projective compactification with a nef adelic line bundle for which orbit-height formulas hold, and prove the relative degree formula. In \S~\ref{sec:intcorr}, we study intertwined polynomials and small-point correspondences. In \S~\ref{sec:proof}, we finish the proof of Theorem~\ref{thm:main}. In \S~\ref{sec:app}, we prove Theorem~\ref{thm:pair}.

\section{The universal critical-divisor family}\label{sec:poly}
Throughout this section, let $d\geq2$ be an integer.
\subsection{The universal family}\label{sec:2.1}
Let
\[
\Poly^d=\Spec\Q[a_0,\ldots,a_d,a_d^{-1}]
\]
be the parameter space of polynomials $a_dz^d+\cdots+a_0$ of degree $d$. The group $\operatorname{Aff}$ consists of degree-one polynomials $z\mapsto uz+v$ with $u\ne0$, and acts on $\Poly^d$ by conjugacy. Then $\MPoly^d=\Poly^d/\operatorname{Aff}$ is the geometric quotient. Every degree-$d$ polynomial over $\C$ is affinely conjugate to some monic centered polynomial
\[
f_{\ba}(z)=z^d+\sum_{j=0}^{d-2}a_jz^j,\quad\ba=(a_0,\ldots,a_{d-2})\in\C^{d-1}.
\]
Let $A_d=\Spec\Q[a_0,\ldots,a_{d-2}]\subseteq\Poly^d$ be the parameter space of monic centered polynomials of degree $d$. Over $\olQ$, the morphism $A_d\subseteq\Poly^d\to\MPoly^d$ is realized as the finite quotient
\[
A_d\to A_d/\mu_{d-1}=\MPoly^d,
\]
where $\mu_{d-1}$ is the group scheme of $(d-1)$-st roots of unity, and it acts on $A_d$ by
\[
\zeta\cdot\ba=\left(a_j\zeta^{j-1}\right)_{0\leq j\leq d-2}.
\]
Then $f_{\zeta\cdot\ba}(z)=\zeta^{-1}f_{\ba}(\zeta z)$. See \cite[\S~2.1]{FG22}.

Let $S_{d-1}=\Sym^{d-1}(\A^1)\simeq\A^{d-1}$, whose coordinates are the elementary symmetric polynomials. Geometric points of $S_{d-1}$ are written as effective divisors of the form $[z_1]+\cdots+[z_{d-1}]$. For $\lambda\in\C$ and $D=\sum_{i=1}^{d-1}[z_i]\in S_{d-1}(\C)$, set
\[
\lambda D:=\sum_{i=1}^{d-1}[\lambda z_i]\in S_{d-1}(\C).
\]

We construct the universal critical-divisor family $(X_d,F_d,p_d,s_d)$ over $\MPoly^d$.
\begin{proposition}\label{prop:sys}
There are an affine variety $X_d$, a finite endomorphism $F_d:X_d\to X_d$, a morphism $p_d:X_d\to\MPoly^d$ with $p_d\circ F_d=p_d$, and a section $s_d:\MPoly^d\to X_d$ of $p_d$, all defined over $\Q$, such that for every $\alpha\in\MPoly^d(\C)$,
\begin{equation}\label{eq:pcf}
\alpha\text{ is PCF}\quad\Longleftrightarrow\quad s_d(\alpha)\in X_d(\C)\text{ is }F_d\text{-preperiodic}.
\end{equation}
\end{proposition}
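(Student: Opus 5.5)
We build $X_d$ as a $\mu_{d-1}$-quotient of a product over the monic centered parameter space $A_d$. Set
\[
\widetilde{X}_d=A_d\times S_{d-1},
\]
and define
\[
\widetilde{F}_d(\ba,D)=(\ba,f_{\ba}(D)),\qquad f_{\ba}\Bigl(\sum[z_i]\Bigr):=\sum[f_{\ba}(z_i)].
\]
Let $\widetilde{p}_d$ be the first projection. Define the section by $\widetilde{s}_d(\ba)=(\ba,\Crit_{\mathrm{fin}}(f_{\ba}))$, where $\Crit_{\mathrm{fin}}(f_{\ba})\in S_{d-1}$ is the divisor of zeros of $f_{\ba}'/d$, counted with multiplicity. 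This is a monic polynomial of degree $d-1$ whose coefficients are polynomial (linear) in $\ba$, so $\widetilde{s}_d$ is a morphism over $\Q$.

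The map $\widetilde{F}_d$ is a morphism over $\Q$. Indeed, the elementary symmetric functions of $f_{\ba}(z_1),\dots,f_{\ba}(z_{d-1})$ are symmetric polynomials in the $z_i$ with coefficients polynomial in $\ba$, so they are polynomials in $\ba$ and in the elementary symmetric functions of the $z_i$. Finiteness of $\widetilde{F}_d$ is checked on coordinate rings: each $z_i$ is integral over $\Q[\ba][f_{\ba}(z_1),\dots,f_{\ba}(z_{d-1})]$ via the monic equation $f_{\ba}(T)-f_{\ba}(z_i)=0$. Hence $\Q[\ba][z_1,\dots,z_{d-1}]$ is finite over the pulled-back ring. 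Taking $S_{d-1}$-invariants, using that $\Q[\ba][z]$ is finite over the invariants, shows that $\widetilde{F}_d^*$ makes $\sO(\widetilde{X}_d)$ a finite module.

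Now let $\mu_{d-1}$ act on $\widetilde{X}_d$ by $\zeta\cdot(\ba,D)=(\zeta\cdot\ba,\zeta^{-1}D)$. Since $f_{\zeta\cdot\ba}(z)=\zeta^{-1}f_{\ba}(\zeta z)$, we get
\[
f_{\zeta\cdot\ba}(\zeta^{-1}D)=\zeta^{-1}f_{\ba}(D),
\]
so $\widetilde{F}_d$ is equivariant. Likewise the critical divisor transforms as $\Crit(f_{\zeta\cdot\ba})=\zeta^{-1}\Crit(f_{\ba})$, because the critical points of $\zeta^{-1}f(\zeta z)$ are $\zeta^{-1}c$. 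So $\widetilde{s}_d$ and $\widetilde{p}_d$ are equivariant too. Define
\[
X_d=\widetilde{X}_d/\mu_{d-1}=\Spec\sO(\widetilde{X}_d)^{\mu_{d-1}},
\]
which is an affine variety. The quotient is defined over $\Q$ because the action is by the group scheme $\mu_{d-1}$ over $\Q$ (Galois-descent of the invariant ring, or the graded-ring description with weights mod $d-1$). Then $F_d,p_d,s_d$ descend, $p_d\circ F_d=p_d$ holds, and $s_d$ is a section of $p_d$ since $\MPoly^d=A_d/\mu_{d-1}$.

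Finiteness of $F_d$ follows from finiteness of $\widetilde{F}_d$. An invariant element of $\sO(\widetilde{X}_d)$ satisfies a monic equation over $\widetilde{F}_d^*\sO(\widetilde{X}_d)$. Averaging, or taking the norm polynomial $\prod_\zeta(T-\zeta\cdot x)$, gives a monic equation over the invariants. Moreover $\sO(\widetilde{X}_d)$ is finite over its invariants, so $\sO(X_d)$ is finite over $F_d^*\sO(X_d)$ by Noetherianity.

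For the equivalence \eqref{eq:pcf}, take $\alpha=[f_{\ba}]$. Since the quotient map $\widetilde{X}_d\to X_d$ has finite fibers (orbits of a finite group) and is equivariant, $s_d(\alpha)$ is $F_d$-preperiodic if and only if $\widetilde{s}_d(\ba)$ is $\widetilde{F}_d$-preperiodic. Preperiodicity upstairs implies it downstairs trivially. Conversely, if $F_d^{m}(x)=F_d^{n}(x)$ with $m>n$, then $\widetilde{F}_d^{m}(\tilde x)=\zeta\cdot\widetilde{F}_d^{n}(\tilde x)$ for some $\zeta$, and iterating equivariance shows the forward orbit of $\tilde x$ lies in finitely many $\mu_{d-1}$-translates of a finite set. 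Next, $\widetilde{s}_d(\ba)$ is preperiodic if and only if the multiset sequence $f_{\ba}^{\circ n}(\Crit_{\mathrm{fin}})$ takes finitely many values. This happens if and only if each finite critical point has finite forward orbit, i.e. $f_{\ba}$ is PCF. For the forward direction, the union of supports of the finitely many divisors is a finite set containing all iterates of every critical point.

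The main technical point is the descent and finiteness of the quotient over $\Q$ when $\mu_{d-1}$ is not constant. I would handle it by working over $\Q(\zeta_{d-1})$ and descending the invariant subring, which is $\Gal$-stable because the action is defined over $\Q$.
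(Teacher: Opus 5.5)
Your proposal is correct and is essentially the paper's proof: the same quotient $(A_d\times S_{d-1})/\mu_{d-1}$ with the action $\zeta\cdot(\ba,D)=(\zeta\cdot\ba,\zeta^{-1}D)$, finiteness deduced from the ordered cover $A_d\times(\A^1)^{d-1}$, and descent of preperiodicity through the finite-fibered quotient map (the paper does this by iterating $D_m(\ba)=\zeta^{-1}D_n(\ba)$ up to $k_0=\operatorname{ord}(\zeta)$). One small slip: for invariant $x$ the product $\prod_\zeta(T-\zeta\cdot x)$ is just $(T-x)^{d-1}$, so you presumably mean the norm of the monic relation over $\widetilde{F}_d^*\sO(\widetilde{X}_d)$; in any case your closing Noetherianity argument already gives the finiteness of $F_d$ without it.
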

\begin{proof}
On $\widetilde{X}_d:=A_d\times S_{d-1}$, define
\[
\widetilde{F}_d(\ba,[z_1]+\cdots+[z_{d-1}])=(\ba,[f_{\ba}(z_1)]+\cdots+[f_{\ba}(z_{d-1})]).
\]
Note that $\widetilde{F}_d:\widetilde{X}_d\to\widetilde{X}_d$ is an endomorphism defined over $\Q$. On the ordered cover $A_d\times(\A^1)^{d-1}$, the endomorphism
\[
A_d\times(\A^1)^{d-1}\to A_d\times(\A^1)^{d-1},\,(\ba,z_1,\ldots,z_{d-1})\mapsto(\ba,f_{\ba}(z_1),\ldots,f_{\ba}(z_{d-1}))
\]
is clearly finite. The induced extension of invariant coordinate rings is integral and of finite type, hence finite. Thus, $\widetilde{F}_d$ is finite.

Let $c_1,\ldots,c_{d-1}$ be the roots of $f_{\ba}'$, counted with multiplicity. For every $n\geq0$, define
\[
D_n(\ba):=\sum_{i=1}^{d-1}[f_{\ba}^{\circ n}(c_i)]\in S_{d-1}.
\]
In particular, $D_0(\ba)$ is the critical divisor of $f_{\ba}$. Note that $\widetilde{s}_d(\ba)=(\ba,D_0(\ba))$ defines a section $\widetilde{s}_d:A_d\to\widetilde{X}_d$ of the projection $\widetilde{X}_d\to A_d$ over $\Q$. The finite group scheme
\[
\mu_{d-1}=\Spec\Q[t]/(t^{d-1}-1),
\]
acts on $\widetilde{X}_d$ by 
\[
\zeta\cdot(\ba,x)=(\zeta\cdot\ba,\zeta^{-1}x).
\]
Then the projection $\widetilde{X}_d\to A_d$, $\widetilde{F}_d$, and $\widetilde{s}_d$ are $\mu_{d-1}$-equivariant. So they descend to $X_d:=\widetilde{X}_d/\mu_{d-1}$, giving
\[
p_d:X_d\to\MPoly^d,\quad F_d:X_d\to X_d,\quad\text{and}\quad s_d:\MPoly^d\to X_d.
\]
The descended endomorphism $F_d$ is also finite.

For $n\geq1$, observe that $\widetilde{F}_d^{\circ n}(\widetilde{s}_d(\ba))=(\ba,D_n(\ba))$. It is straightforward to verify that for every $\ba\in A_d(\C)$, $f_\ba$ is PCF if and only if $\widetilde{s}_d(\ba)$ is $\widetilde{F}_d$-preperiodic, and that $s_d([\ba])$ is $F_d$-preperiodic in this case. Suppose that $s_d([\ba])$ is $F_d$-preperiodic. Then
\[
\zeta\cdot\ba=\ba\quad\text{and}\quad D_m(\ba)=\zeta^{-1}D_n(\ba)
\]
for some $\zeta\in\mu_{d-1}(\olQ)$ and some integers $m>n\geq0$. Equivariance of $\widetilde{F}_d$ implies
\[
D_{n+k(m-n)}(\ba)=\zeta^{-k}D_n(\ba)
\]
for every $k\geq1$. For $k_0=\operatorname{ord}(\zeta)$, we conclude that $D_{n+k_0(m-n)}(\ba)=D_n(\ba)$, and hence
\[
\widetilde{F}_d^{\circ(n+k_0(m-n))}(\widetilde{s}_d(\ba))=\widetilde{F}_d^{\circ n}(\widetilde{s}_d(\ba)),
\]
which completes the proof of \eqref{eq:pcf}.
\end{proof}
From now on, we fix the tuple $(X_d,F_d,p_d,s_d)$ constructed in the proof of Proposition~\ref{prop:sys}.

\subsection{A nef adelic line bundle}
Let $F$ be a number field. For $v\in M_F$, write $n_v=[F_v:\Q_v]/[F:\Q]$. For a polynomial $f\in F[z]$ of degree $d\geq2$ and $x\in\C_v$, define
\[
\hat{\lambda}_{f,v}(x)=\lim_{n\to\infty}\frac{1}{d^n}\log\max\{1,|f^{\circ n}(x)|_v\}.
\]
Then $\hat{\lambda}_{f,v}\geq0$ and $\hat{\lambda}_{f,v}(f(x))=d\hat{\lambda}_{f,v}(x)$. Moreover, $\hat{h}_f(x)=\sum_{v\in M_F}n_v\hat{\lambda}_{f,v}(x)$ for $x\in F$; see \cite[Theorem~3.29]{Silverman2007}. Suppose that $\Crit_{\mathrm{fin}}(f)\subseteq F$. Define
\[
h_{\mathrm{crit}}^{\max}(f)=\sum_{v\in M_F}n_v\max_{c\in\Crit_{\mathrm{fin}}(f)}\hat{\lambda}_{f,v}(c).
\]
For a linear polynomial $\ell(z)=az+b$ with $a\in F^\times$ and $b\in F$, it is clear that
\begin{equation}\label{eq:affgreen}
\hat{\lambda}_{\ell\circ f\circ\ell^{-1},v}(\ell(x))=\hat{\lambda}_{f,v}(x)\quad(v\in M_F,\ x\in\C_v).
\end{equation}
Clearly, $h_{\mathrm{crit}}^{\max}(f)$ is independent of the choice of $F$, and it is invariant under affine conjugacy and Galois action. Thus, the sum of the local maxima defines a Galois-invariant function
\[
h_{\mathrm{crit}}^{\max}:\MPoly^d(\olQ)\to\R_{\geq0}.
\]
In general, this sum of local maxima is not the maximum of the global canonical heights of the critical points. Since a degree-$d$ polynomial has exactly $d-1$ finite critical points (counted with multiplicity), for $\alpha\in\MPoly^d(\olQ)$, we have
\begin{equation}\label{eq:cmp}
\frac{h_{\mathrm{crit}}(\alpha)}{d-1}\leq h_{\mathrm{crit}}^{\max}(\alpha)\leq h_{\mathrm{crit}}(\alpha),
\end{equation}
and
\[
\alpha\text{ is PCF}\ \Longleftrightarrow\ h_{\mathrm{crit}}(\alpha)=0\ \Longleftrightarrow\ h_{\mathrm{crit}}^{\max}(\alpha)=0.
\]

We construct a nef adelic line bundle on a compactification of the universal critical-divisor family, and relate its height to $h_{\mathrm{crit}}^{\max}$.
\begin{theorem}\label{thm:met}
After a finite extension $L/\Q$, there are a normal projective compactification $X_{d,L}\subseteq\overline{X}_d$ and a nef adelic line bundle $\overline{A}$ on $\overline{X}_d$, with $A$ ample, such that
\begin{equation}\label{eq:orbitheight}
h_{\overline{A}}((F_d^{\circ n}\circ s_d)(\alpha))=k_d d^n h_{\mathrm{crit}}^{\max}(\alpha)\quad(\alpha\in\MPoly^d(\olQ),\ n\geq0),
\end{equation}
where $k_d=(d-1)\operatorname{lcm}(1,\ldots,d)\in\Z_{>0}$. In particular, $h_{\overline{A}}(F_d^{\circ n}(s_d(\alpha)))=0$ for all PCF $\alpha\in\MPoly^d(\olQ)$ and $n\geq0$.

Moreover, there are a finite subset $S\subseteq M_L$ containing all archimedean places, a projective flat model $\sX_d\to\Spec\sO_{L,S}$ of $\overline{X}_d$, and a relatively ample line bundle $\sA$ extending $A$ such that for every $v\in M_L\setminus S$, the $v$-adic metric of $\overline{A}$ is the model metric of $(\sX_d,\sA)$ at $v$.
\end{theorem}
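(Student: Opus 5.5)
We construct $\overline{A}$ as a limit of metrics: first choose a good compactification where $F_d$ extends, then run Tate's limiting argument.

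\textbf{Step 1: compactification.} Compactify $S_{d-1}=\Sym^{d-1}(\A^1)$ as $\Sym^{d-1}(\P^1)\simeq\P^{d-1}$. Use weighted projective space for the parameter factor. Give $a_j$ weight $d-j$, the natural homogeneity under $z\mapsto \lambda z$: if $z$ has weight $1$, then $f_{\ba}(z)$ has weight $d$ only after rescaling $a_j$ by $\lambda^{d-j}$. Combine this with $S_{d-1}$ in a single weighted projective space $\P(w)$, where the symmetric functions $e_i(z)$ have weight $i$. Then $\widetilde X_d=A_d\times S_{d-1}$ is the open complement of the hyperplane at infinity $\{t=0\}$ in
\[
\P(1,\,d,d-1,\ldots,2,\,1,2,\ldots,d-1).
\]
The map $\widetilde F_d$ is weighted homogeneous of degree $d$, since the symmetric functions of $f_{\ba}(z_i)$ are quasi-homogeneous. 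It extends to a morphism of $\P(w)$ exactly when its only common zero at $t=0$ is trivial. At $t=0$ the map keeps the parameter coordinates fixed in weighted form, and sends $e_i(z)$ to $e_i(z^d)$ on the top-degree part. Its common zero at infinity is therefore $\ba=0$ and all $z_i=0$, which is the irrelevant point.

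Here lies a subtlety: the identity on the parameter factor is only degree $1$, not $d$. I will instead take $\overline{X}_d$ to be the normalization of the closure of $X_{d}$ in a weighted projective space carrying the ample $\Q$-line bundle $\sO(\mathrm{lcm})$. This $\overline X_d$ is normal and is the quotient by $\mu_{d-1}$; the weight $\operatorname{lcm}(1,\ldots,d)$ is what makes $\sO(1)$ a genuine line bundle.

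\textbf{Step 2: the local metric.} Rather than extend $F_d$ everywhere, define $\overline{A}$ directly by local Green functions. At $v$, for a point $x=([\ba],D)$ with $D=\sum[z_i]$, set
\[
g_v(x)=k_d\max\Bigl(\max_i\hat\lambda_{f_{\ba},v}(z_i),\;G_v(\ba)\Bigr),
\]
where $G_v(\ba)=\max_{c}\hat\lambda_{f_{\ba},v}(c)$ is the critical escape rate. By the Branner--Hubbard estimates, $G_v$ is comparable, up to a bounded error, to $\log^+\max_j|a_j|_v^{1/(d-j)}$. Together with $\hat\lambda=\log^+|z|+O(1)$, this makes $g_v$ a continuous Weil function for $A=\sO(\text{weight }k_d/(d-1)\text{-twist})$ on $\overline X_d$. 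It is invariant under $\mu_{d-1}$ and affine conjugacy by \eqref{eq:affgreen}.

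Plurisubharmonicity is standard: $g_v$ is a max of psh, or Berkovich semipositive, functions arising as uniform limits of $d^{-n}\log\max|\text{polys}|$, so $\overline{A}$ is nef. Choose $S$ to contain the places where the coefficients of the Böttcher-type bounds have non-unit content. For $v\notin S$, one has $G_v=\log^+\max|a_j|^{1/(d-j)}$ exactly, since monic centered polynomials have good reduction behaviour. Likewise $\hat\lambda=\log^+|z|$ whenever $|z|\geq$ the coefficient norm. Hence $g_v$ equals the model function of the $\sO_{L,S}$-model $\sX_d$, namely the normalized closure in weighted projective space over $\sO_{L,S}$, twisted appropriately.

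\textbf{Step 3: the height formula.} Evaluate at $F_d^{\circ n}s_d(\alpha)$, where $D=D_n$ consists of the points $f^n(c_i)$. The $z$-term is $d^n\max_i\hat\lambda(c_i)$, which for $n\geq1$ dominates the parameter term $G_v$. Summing over places with the product formula then gives \eqref{eq:orbitheight}. The normalization constants in $k_d$, namely the lcm and the factor $d-1$ from the $\mu_{d-1}$ quotient degree, must be tracked so that $A$ is integral.

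The $n=0$ case and the possible mismatch of the parameter term for $n\geq1$ are the main obstacle. To repair them I would drop the parameter term and use instead the max over $z_i$ together with the critical points of $f$ in a fibered compactification. I expect to have to adjust the definition of $g_v$ until exact equality holds for all $n$, with ampleness surviving because the critical points themselves control $\ba$.
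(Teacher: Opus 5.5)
\textbf{The gap: Step~2 asserts, but does not prove, that $g_v$ gives a nef adelic line bundle.} You never show that $g_v$ defines a continuous semipositive metric on $A$ over the compactification, or that these metrics form a nef adelic line bundle in the sense of \cite{YZ26}.

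The Branner--Hubbard comparison $g_v=k_d\log\max\{1,\rho(\ba),|z_i|_v\}+O(1)$, with $\rho(\ba)=\max_j|a_j|_v^{1/(d-j)}$, only gives a bounded metric. Continuity and semipositivity at the points of $\{t=0\}$, in the complex and Berkovich analytifications, do not follow from a bounded difference.

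Your appeal to uniform limits of $d^{-n}\log\max_k|P_k|$ needs, for each $n$, a base-point-free system of degree $d^n$ on the compactification that converges uniformly on all of it. You observed yourself why $\widetilde F_d$ cannot supply one: the parameters are carried with degree $1$.

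The missing idea is the one you only gesture at in your last sentence, that the critical points control the parameters. The paper argues as follows.
\begin{enumerate}
\item It passes to the ordered cover $\A^{2d-2}$ with coordinates $(\bc,\bz)$, where $g_{\bc}(z)=\int_0^z\prod_i(t-c_i)\dd t$.
\item It uses $\Theta_n(\bc,\bz)=\bigl(g_{\bc}^{\circ n}(c_i),g_{\bc}^{\circ n}(z_j)\bigr)$. By \cite[Lemma~2.4]{Ingram2012}, the degree-$d$ forms $g_{\bc}(c_i)$ have no common nonzero zero, so $\Theta_n$ extends to an endomorphism of $\P^{2d-2}$ of degree $d^n$.
\item The Nullstellensatz identity \eqref{eq:homo} then gives the uniform bound \eqref{eq:psin}, with $C_{d,v}=0$ for all but finitely many $v$. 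This is exactly the Cauchy condition that produces a nef adelic $\overline H$ with potential $\psi_v$.
\item Only then is the metric pushed down: through the finite map $\Phi_w$ to $P_w$, the $\mu_{d-1}$-quotient, and a Galois-averaged (norm) descent through the normal closure. These steps preserve continuity, nefness, and the model metric at almost all places.
\end{enumerate}
Without an argument of this kind, Step~2 is a claim rather than a proof.

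\textbf{What is fine, and two smaller problems.} Once the metric exists, your Step~3 is correct, and the obstacle you describe at the end is illusory. By \eqref{eq:affgreen}, your $g_v/k_d=\max\bigl(\max_i\hat\lambda_{f_{\ba},v}(z_i),\max_c\hat\lambda_{f_{\ba},v}(c)\bigr)$ is exactly the paper's potential $\psi_v$, transported to $X_d$. At $F_d^{\circ n}(s_d(\alpha))$ the divisor term is $d^nm$ and the critical term is $m\geq0$, so the maximum is $d^nm$ for every $n\geq0$, and \eqref{eq:orbitheight} follows.

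Your proposed repair is unnecessary. Insofar as it drops the critical term, it is also harmful. $\hat\lambda_{f_{\ba},v}$ vanishes on the filled Julia set, so with $D$ supported on fixed points of $f_{\ba}$ the divisor term is $0$ while $\ba\to\infty$. Such a function cannot be the potential of a continuous metric on an ample bundle.

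Finally, your normalizations do not match. $g_v$ carries the factor $k_d$, but you take $A$ of weight $\operatorname{lcm}(1,\ldots,d)=k_d/(d-1)$. Moreover, $\sO_{P_w}(\operatorname{lcm}(1,\ldots,d))$ need not descend to $P_w/\mu_{d-1}$ as a line bundle, because stabilizers act on its fibers by characters of order dividing $d-1$. This is why the paper takes $\pi_w^*A\simeq\sO_{P_w}(k_d)$.
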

\begin{proof}
Let $B=\A^{2d-2}$. Write $\bc=(c_1,\ldots,c_{d-1})$ and $\bz=(z_1,\ldots,z_{d-1})$. For $\bc=(c_1,\ldots,c_{d-1})\in\A^{d-1}(\olQ)$, define
\[
g_{\bc}(z)=\int_0^z\prod_{i=1}^{d-1}(t-c_i)\dd t.
\]
Then $g_{\bc}(0)=0$, the leading coefficient of $g_{\bc}$ is $1/d$, and the finite critical points of $g_{\bc}$ counted with multiplicity are exactly $c_1,\ldots,c_{d-1}$.

\medskip

\noindent\textit{The metric on the ordered cover.} For $n\geq1$, define an endomorphism $\Theta_n:B\to B$ by
\[
\Theta_n(\bc,\bz)=\left(g_{\bc}^{\circ n}(c_1),\ldots,g_{\bc}^{\circ n}(c_{d-1}),g_{\bc}^{\circ n}(z_1),\ldots,g_{\bc}^{\circ n}(z_{d-1})\right).
\]
Observe that every coordinate of $\Theta_n$ is a polynomial in $(\bc,\bz)$ of total degree $d^n$, whose homogeneous part of maximal degree $d^n$ is a nonzero rational scalar multiple of the $d^{n-1}$-th power of the corresponding coordinate of $\Theta_1$. On $\P^{2d-2}$, homogenize all coordinates of $\Theta_n$ to degree $d^n$ and add the coordinate $t^{d^n}$. A common zero of these homogeneous coordinates must satisfy $t=0$, and
\[
g_{\bc}(c_i)=0\ (1\leq i\leq d-1),\quad\text{and}\quad g_{\bc}(z_j)=0\ (1\leq j\leq d-1).
\]
By \cite[Lemma~2.4]{Ingram2012}, the first set of equalities implies $\bc=\mathbf{0}$ and $g_{\mathbf{0}}(z)=z^d/d$; so the vanishing of $g_{\mathbf{0}}(z_j)$ for all $j$ implies $\bz=0$. Consequently, $\Theta_n$ extends to an endomorphism
\[
\Theta_n:\P^{2d-2}\to\P^{2d-2}
\]
of algebraic degree $d^n$, defined over $\Q$.

Let $\overline{\sO(1)}_{\mathrm{can}}$ be the line bundle $\sO(1)$ on $\P^{2d-2}_\Q$ with the canonical metric defined below. For $v\in M_\Q$, a linear form $s$, and a point $x=[x_0:\cdots:x_{2d-2}]\in\P^{2d-2}(\C_v)$, the canonical metric is given by
\[
\Vert s(x)\Vert_{\mathrm{can},v}=\frac{|s(x_0,\dots,x_{2d-2})|_v}{\max\left\lbrace|x_0|_v,\ldots,|x_{2d-2}|_v\right\rbrace}.
\]
Then $\overline{\sO(1)}_{\mathrm{can}}$ is a nef adelic line bundle. Let $n\geq1$. Use the homogeneous coordinate $t^{d^n}$ to identify $\Theta_n^*\sO(1)$ with $\sO(d^n)$. Define $\overline{H}_n$ as $\sO(1)$ with the $d^n$-th root of the pullback metric; in additive notation,
\[
\overline{H}_n=\frac{1}{d^n}\Theta_n^*\overline{\sO(1)}_{\mathrm{can}}.
\]
Then $\overline{H}_n$ is a nef adelic line bundle on $\P^{2d-2}_\Q$.

On $B(\C_v)$, define the local potential
\[
\psi_{n,v}(\bc,\bz)=\frac{1}{d^n}\log\max\{1,|g_{\bc}^{\circ n}(c_i)|_v,|g_{\bc}^{\circ n}(z_j)|_v\colon1\leq i,j\leq d-1\}.
\]
We claim that for every $v\in M_\Q$, there is a constant $C_{d,v}\geq0$ such that
\begin{equation}\label{eq:psin}
\|\psi_{n+1,v}-\psi_{n,v}\|_{\sup}\leq\frac{C_{d,v}}{d^{n+1}}\quad\text{on }B(\C_v)
\end{equation}
for all $n\geq1$, and that $C_{d,v}=0$ for all but finitely many $v\in M_\Q$.

Fix a place $v$. For arbitrary $\bc=(c_1,\ldots,c_{d-1})$ and $\bz=(z_1,\ldots,z_{d-1})$ in $\C_v^{d-1}$, let $g=g_{\bc}$ and $\rho=\max\{1,|c_1|_v,\ldots,|c_{d-1}|_v\}$. For $n\geq1$, define
\[
M_n=\max\{1,|g^{\circ n}(c_i)|_v,|g^{\circ n}(z_j)|_v\colon1\leq i,j\leq d-1\}.
\]
Write $g_{\bc}(z)=z^d/d+\sum_{j=1}^{d-1}A_j(\bc)z^j$, where $A_j\in\Q[\bc]$ is homogeneous of degree $d-j$. Then we can choose constants $a_{j,d,v}\geq0$ such that
\[
|A_j(\bc)|_v\leq a_{j,d,v}\rho^{d-j}
\]
for $1\leq j\leq d-1$. By the triangle inequality, there is $K_{d,v}\geq1$ such that
\begin{equation}\label{eq:upp}
\max\{1,|g_{\bc}(w)|_v\}\leq K_{d,v}\max\{\rho,|w|_v\}^d\quad\text{for every }w\in\C_v,
\end{equation}
and
\begin{equation}\label{eq:low}
|g_{\bc}(w)|_v\geq K_{d,v}^{-1}|w|_v^d>|w|_v\quad\text{if }|w|_v>K_{d,v}\rho.
\end{equation}
Moreover, we may choose $K_{d,v}=1$ for all but finitely many $v\in M_\Q$. By \cite[Lemma~2.4]{Ingram2012}, the homogeneous forms $g_{\bc}(c_i)$ have degree $d$ and no common nonzero zero over $\olQ$. Applying the homogeneous Nullstellensatz, we obtain an integer $k\geq d$ and homogeneous polynomials $H_{ij}\in\Q[\bc]$ of degree $k-d$ such that
\begin{equation}\label{eq:homo}
c_i^k=\sum_{j=1}^{d-1}H_{ij}(\bc)g_{\bc}(c_j),\quad 1\leq i\leq d-1.
\end{equation}
If $\rho>1$, by \eqref{eq:homo} for an index $i_0$ with $|c_{i_0}|_v=\rho$ and the triangle inequality, there is a constant $K'_{d,v}\geq1$ such that
\[
\rho^k\leq K_{d,v}'\rho^{k-d}\max_j|g_{\bc}(c_j)|_v.
\]
We can choose $K_{d,v}'=1$ for all but finitely many $v\in M_\Q$. After increasing $K_{d,v}$, we may assume that
\[
\max_i|g_{\bc}(c_i)|_v\geq K_{d,v}^{-1}\rho^d\quad \text{whenever }\rho>1,
\]
and we still have that $K_{d,v}=1$ for all but finitely many $v\in M_\Q$. Let $R_{d,v}=K_{d,v}^{2/(d-1)}\in\R_{\geq1}$. Observe that $R_{d,v}=1$ if $K_{d,v}=1$. Suppose first that $\rho>R_{d,v}$. Then for some $1\leq i_0\leq d-1$, we have $|g(c_{i_0})|_v\geq K_{d,v}^{-1}\rho^d>K_{d,v}\rho$. Applying \eqref{eq:low} inductively, we obtain
\[
|g^{\circ n}(c_{i_0})|_v>K_{d,v}\rho
\]
for all $n\geq1$. Then $M_n>K_{d,v}\rho$ for all $n\geq1$. By \eqref{eq:upp} and \eqref{eq:low}, we conclude that
\[
K_{d,v}^{-1}M_n^d\leq M_{n+1}\leq K_{d,v}M_n^d\quad\text{and}\quad|\log M_{n+1}-d\log M_n|\leq\log K_{d,v}
\]
for every $n\geq1$. Suppose now that $\rho\leq R_{d,v}$. If $M_n>K_{d,v}R_{d,v}\geq K_{d,v}\rho$, the same argument applies. Otherwise, $1\leq M_n\leq K_{d,v}R_{d,v}$, and \eqref{eq:upp} shows
\[
1\leq M_{n+1}\leq K_{d,v}(K_{d,v}R_{d,v})^d.
\]
It follows that
\[
|\log M_{n+1}-d\log M_n|\leq(d+1)\log K_{d,v}+d\log R_{d,v}.
\]
Therefore, setting
\[
C_{d,v}=(d+1)\log K_{d,v}+d\log R_{d,v},
\]
on $B(\C_v)$ we obtain
\[
\|\psi_{n+1,v}-\psi_{n,v}\|_{\sup}\leq\frac{1}{d^{n+1}}\sup_{\bc,\bz\in\C_v^{d-1}}|\log M_{n+1}-d\log M_n|\leq\frac{C_{d,v}}{d^{n+1}}\quad(n\geq1).
\]
By the construction, $C_{d,v}=0$ for all but finitely many $v\in M_\Q$.

For $m,n\geq1$, the difference $\psi_{m,v}-\psi_{n,v}$ is the logarithmic metric ratio of $\overline{H}_m$ and $\overline{H}_n$ on $B_{\C_v}$. By continuity, the bound in \eqref{eq:psin} holds on the projective analytification $\left(\P^{2d-2}_{\C_v}\right)^{\mathrm{an}}$ for the logarithmic metric ratio of $\overline{H}_{n+1}$ and $\overline{H}_n$. Using the fact that $C_{d,v}=0$ for all but finitely many $v\in M_\Q$, it is routine to see that $(\overline{H}_n)_{n\geq1}$ satisfies the Cauchy condition of Yuan--Zhang \cite{YZ26}. By \cite[\S~A.5.2]{YZ26}, the sequence $(\overline{H}_n)_{n\geq1}$ converges to a nef adelic line bundle $\overline{H}$ on $\P^{2d-2}_\Q$ with underlying line bundle $\sO(1)$.

Since $h_{\overline{\sO(1)}_{\mathrm{can}}}$ is the standard height $h$ on $\P^{2d-2}(\olQ)$, for every $x\in\P^{2d-2}(\olQ)$ we have
\[
h_{\overline{H}}(x)=\lim_{n\to\infty}\frac{1}{d^n}h(\Theta_n(x)).
\]
If $g_{\bc}$ is PCF and every $z_j$ is $g_{\bc}$-preperiodic, then $h_{\overline{H}}(\bc,\bz)=0$.

\medskip

\noindent\textit{Finite descent to the quotient.} We next descend $\overline{H}$ to a compactification of $X_d$. Let $\lambda\in\R_{>0}\cap\olQ$ be such that $\lambda^{d-1}=d$. Fix a primitive $(d-1)$-th root of unity $\zeta_{d-1}$. Set $L=\Q(\lambda,\zeta_{d-1})$.
Write
\[
m_{\bc}=\frac{c_1+\cdots+c_{d-1}}{d-1}.
\]
The affine change of coordinates $z\mapsto\lambda z+m_{\bc}$ sends $g_{\bc}$ to the monic centered polynomial
\[
\lambda^{-1}(g_{\bc}(\lambda z+m_{\bc})-m_{\bc})\triangleq f_{\ba(\bc)}.
\]
Define a morphism
\[
\Omega:B_L\to A_{d,L}\times S_{d-1,L},\,(\bc,\bz)\mapsto\left(\ba(\bc),\sum_{j=1}^{d-1}[(z_j-m_{\bc})/\lambda]\right).
\]

Define the weighted projective space
\[
P_w=\operatorname{Proj}L[t,b_0,\ldots,b_{d-2},u_1,\ldots,u_{d-1}],\quad\deg t=1,\quad\deg b_j=d-j,\quad\deg u_i=i.
\]
The functions $b_j/t^{d-j}$ and $u_i/t^i$ identify the affine chart $D_+(t)$ with $A_{d,L}\times S_{d-1,L}$. Equip $L[t,\bc,\bz]$ with the standard grading in which every variable has degree $1$, so that its projective spectrum is $\P^{2d-2}_L$. Define
\begin{align*}
B_0(t,\bc)&=\frac{1}{\lambda}\left(g_{\bc}(m_{\bc})-m_{\bc}t^{d-1}\right),\\
B_j(t,\bc)&=\frac{\lambda^{j-1}}{j!}g_{\bc}^{(j)}(m_{\bc})\ && (1\leq j\leq d-2),\\
U_i(\bc,\bz)&=\frac{1}{\lambda^i}e_i(z_1-m_{\bc},\ldots,z_{d-1}-m_{\bc})\ && (1\leq i\leq d-1),
\end{align*}
where $g_{\bc}^{(j)}$ is the $j$-th derivative and $e_i$ is the $i$-th elementary symmetric polynomial in $d-1$ variables. Each $B_j$ is homogeneous of degree $d-j$, and each $U_i$ is homogeneous of degree $i$. Then the assignments $t\mapsto t$, $b_j\mapsto B_j$, and $u_i\mapsto U_i$ define a degree-preserving homomorphism
\[
L[t,b_0,\ldots,b_{d-2},u_1,\ldots,u_{d-1}]\to L[t,\bc,\bz].
\]
It induces a morphism $\Phi_w:\P^{2d-2}_L\setminus V\to P_w$, where $V$ is the common zero locus of $t,B_0,\ldots,B_{d-2},U_1,\ldots,U_{d-1}$. A direct calculation shows that $V(\olQ)=\emptyset$. Thus, we obtain a morphism
\[
\Phi_w:\P^{2d-2}_L\to P_w.
\]
Using the chart $t=1$, we see that $\Phi_w$ extends $\Omega$.

Set
\[e=\operatorname{lcm}(1,\ldots,d)=\frac{k_d}{d-1}.
\]
Since every weight divides the positive integer $e$, the sheaf $\sO_{P_w}(e)$ is an ample line bundle; see \cite[Remarques~1.8 and Proposition~2.3(i)]{Delorme1975}. Via $\Phi_w$, we obtain an isomorphism $\Phi_w^*\sO_{P_w}(e)\simeq\sO_{\P^{2d-2}}(e)$. If a fiber of $\Phi_w$ is positive-dimensional, it contains a projective integral curve on which $\Phi_w^*\sO_{P_w}(e)\simeq\sO_{\P^{2d-2}}(e)$ has degree zero, a contradiction. Thus, $\Phi_w$ is quasi-finite. Since $\Phi_w$ is proper and $P_w$ is projective, $\Phi_w$ is finite and surjective.

Let $G_0=\mu_{d-1}(L)=\mu_{d-1}(\olQ)$. Define the $G_0$-action on $P_w$ by
\[
\zeta^*(t)=t,\quad\zeta^*(b_j)=\zeta^{j-1}b_j\quad(0\leq j\leq d-2),\quad\zeta^*(u_i)=\zeta^{-i}u_i\quad(1\leq i\leq d-1).
\]
On $D_+(t)\simeq A_{d,L}\times S_{d-1,L}$, this is exactly the action used to construct $X_d$. Define
\[
\overline{X}_d=\operatorname{Proj}L[t,b_0,\ldots,b_{d-2},u_1,\ldots,u_{d-1}]^{G_0},
\]
and let $\pi_w:P_w\to\overline{X}_d$ be the quotient morphism. Then $\pi_w$ is a finite surjective morphism and $\overline{X}_d=P_w/G_0$; see \cite[Chapter~1]{MFK1994}. The invariant ring is a finitely generated normal graded domain, so $\overline{X}_d$ is normal and projective. Since $t$ is $G_0$-invariant, we obtain
\[
X_{d,L}=\left(A_{d,L}\times S_{d-1,L}\right)/G_0\simeq D_+(t)/G_0\subseteq\overline{X}_d.
\]
Thus, $\overline{X}_d$ is a projective compactification of $X_{d,L}$.

The graded action defines a $G_0$-linearization of $\sO_{P_w}(e)$. For every $x\in P_w(\olQ)$, its stabilizer acts on the one-dimensional fiber $\sO_{P_w}(e)(x)$ through a character whose order divides $d-1$. Consequently, we obtain a line bundle
\[
A=\left((\pi_w)_*\sO_{P_w}(k_d)\right)^{G_0}
\]
on $\overline{X}_d$ and a $G_0$-equivariant isomorphism $\pi_w^*A\simeq\sO_{P_w}(k_d)$; see \cite[Chapter~1]{MFK1994}. Note that $A$ is ample because $\pi_w^*A$ is ample. Set $\phi=\pi_w\circ\Phi_w:\P^{2d-2}_L\to\overline{X}_d$, which is finite and surjective. Observe that $\phi^{-1}(X_{d,L})=B_L$. Fix the isomorphism
\[
\eta:\phi^*A\simeq\sO_{\P^{2d-2}_L}(k_d)
\]
obtained from the identifications above.

The morphism $\phi$ identifies $F:=L(\overline{X}_d)$ with a subfield of $E:=L(\P^{2d-2}_L)$, and $E/F$ is finite and separable. Fix an algebraic closure $\overline{E}$ of $E$, and let $\Lambda/F$ be the normal closure of $E/F$ in $\overline{E}$. Let $R$ be the normalization of $\overline{X}_d$ in $\Lambda$. Then $R$ is normal and projective, its structural morphism $q:R\to\overline{X}_d$ is finite and surjective, and $R/\Gamma=\overline{X}_d$, where $\Gamma=\Gal(\Lambda/F)$. Since $\P^{2d-2}_L$ is normal and finite over $\overline{X}_d$, there is a finite surjective morphism $\theta:R\to\P^{2d-2}_L$ satisfying $q=\phi\circ\theta$.

The isomorphism $\theta^*\eta:q^*A\simeq\theta^*\sO_{\P^{2d-2}_L}(k_d)$ allows us to regard $\overline{N}:=\theta^*(\overline{H}^{\otimes k_d})$ as a nef adelic line bundle on $R$ with underlying line bundle $q^*A$. Here we also view $\overline{H}$ as an adelic line bundle on $\P^{2d-2}_L$. For a place $v$ of $L$, denote the metric of $\overline{N}$ at $v$ by $\|\cdot\|_{0,v}$. Since $q\circ\gamma=q$ for every $\gamma\in\Gamma$, each $\gamma^*\overline{N}$ also has underlying line bundle $q^*A$ and is nef. Define a metric on $q^*A$ by
\[
\|(q^*s)(y)\|_{\mathrm{av},v}=\prod_{\gamma\in\Gamma}\|(q^*s)(\gamma(y))\|_{0,v}^{1/\#\Gamma},
\]
where $s$ is a nonvanishing section of $A$ on an open subset $U\subseteq\overline{X}_d$, and $y\in(q^{-1}(U)_{\C_v})^{\mathrm{an}}$. The metric $\|\cdot\|_{\mathrm{av}}=(\|\cdot\|_{\mathrm{av},v})_v$ on $q^*A$ is $\Gamma$-invariant. Since $q^{\mathrm{an}}$ is the quotient by $\Gamma$, we obtain a unique continuous metric $\|\cdot\|_A=(\|\cdot\|_{A,v})_v$ on the line bundle $A$, characterized by
\[
\|s(q(y))\|_{A,v}=\|(q^*s)(y)\|_{\mathrm{av},v}.
\]

Since $\overline{X}_d$ is normal, the algebraic norm for the finite morphism $q$ is defined and $\operatorname{Nm}_q(q^*A)\simeq A^{\otimes\#\Gamma}$; see \cite[Theorem~2.7 and Proposition~2.8]{Li24}. The descended metric $\|\cdot\|_A$ on $A$ is the $\#\Gamma$-th root of the norm metric induced by $\|\cdot\|_{\mathrm{av}}$ on $q^*A$. Its pullback to $q^*A$ is $\|\cdot\|_{\mathrm{av}}$. Using the definition of nefness through model approximation in \cite[\S~A.5.2]{YZ26}, it is routine to verify that $\overline{A}=(A,\|\cdot\|_A)$ is a nef adelic line bundle on $\overline{X}_d$.

\medskip

\noindent\textit{The height formula and good models.} Let $\alpha\in\MPoly^d(\olQ)$ and $n\geq0$. Set
\[
x=(F_d^{\circ n}\circ s_d)(\alpha)\in X_d(\olQ).
\]
Since $q$ is finite and surjective, we can choose $y\in R(\olQ)$ with $q(y)=x$. Let $\gamma\in\Gamma$ be arbitrary. Then we have $\phi\circ\theta(\gamma(y))=x$, and hence $\theta(\gamma(y))\in B(\olQ)=\phi^{-1}(X_d(\olQ))$. Write $\theta(\gamma(y))=(\bc,\bz)$ in $B(\olQ)=\olQ^{2d-2}$. Fix $\ba\in A_d(\olQ)$ with $[f_{\ba}]=\alpha$, and let $u_1,\ldots,u_{d-1}$ be the finite critical points of $f_{\ba}$ (counted with multiplicity). With the notation of Proposition~\ref{prop:sys}, we have
\[
D_n(\ba)=\sum_{i=1}^{d-1}[f_{\ba}^{\circ n}(u_i)].
\]
Write $\Phi_w(\bc,\bz)=\Omega(\bc,\bz)=(\bb,D)$, where
\[
f_{\bb}(Z)=\lambda^{-1}\left(g_{\bc}(\lambda Z+m_{\bc})-m_{\bc}\right),\quad D=\sum_{j=1}^{d-1}\left[\frac{z_j-m_{\bc}}{\lambda}\right].
\]
Note that
\[
\pi_w(\bb,D)=q(\gamma(y))=q(y)=x=F_d^{\circ n}(s_d(\alpha))=\pi_w(\ba,D_n(\ba)).
\]
Since the fibers of $\pi_w$ on $\olQ$-points are $G_0$-orbits, $(\bb,D)=\zeta\cdot(\ba,D_n(\ba))$ for some $\zeta\in G_0$. Then we deduce that
\[
g_{\bc}=\sigma^{-1}\circ f_{\ba}\circ\sigma\quad\text{and}\quad\sum_{j=1}^{d-1}[z_j]=\sum_{i=1}^{d-1}[\sigma^{-1}\circ f_{\ba}^{\circ n}(u_i)],
\]
where $\sigma(Z)=\zeta(Z-m_{\bc})/\lambda$.

Take a number field $F$ defining $y$, all morphisms involved, and all coordinates above. Let $v\in M_F$. The affine potential of $\overline{H}$ at $v$ is
\[
\psi_v(\bc,\bz)=\max\{\hat{\lambda}_{g_{\bc},v}(c_i),\hat{\lambda}_{g_{\bc},v}(z_j):1\leq i,j\leq d-1\}.
\]
By \eqref{eq:affgreen} and $\hat{\lambda}_{f_{\ba},v}(f_{\ba}^{\circ n}(u_i))=d^n\hat{\lambda}_{f_{\ba},v}(u_i)\geq\hat{\lambda}_{f_{\ba},v}(u_i)$, we obtain
\[
\psi_v(\theta(\gamma(y)))=\psi_v(\bc,\bz)=d^n\max_i\hat{\lambda}_{f_{\ba},v}(u_i).
\]
Summing with the weights $n_v$ implies $h_{\overline{H}}(\theta(\gamma(y)))=d^nh_{\mathrm{crit}}^{\max}(\alpha)$. Thus, we conclude that
\[
h_{\overline{A}}(x)=h_{q^*\overline{A}}(y)=\frac{k_d}{\#\Gamma}\sum_{\gamma\in\Gamma}h_{\overline{H}}(\theta(\gamma(y)))=k_dd^nh_{\mathrm{crit}}^{\max}(\alpha).
\]
If $\alpha$ is PCF, then $h_{\overline{A}}(x)=k_dd^nh_{\mathrm{crit}}^{\max}(\alpha)=0$.

\smallskip

Fix $h\in\Z_{>0}$ such that $A^{\otimes h}$ is very ample, a closed immersion $j:\overline{X}_d\hookrightarrow\P^m_L$, and an isomorphism $j^*\sO(1)\simeq A^{\otimes h}$. Let $S\subseteq M_L$ contain all archimedean places, all places above those $v\in M_\Q$ for which $C_{d,v}\ne0$, and all finite places at which $\Theta_1$ fails to extend to a morphism over the valuation ring. Observe that $S$ is finite. At every place outside $S$, the metric of $\overline{H}_1$ is the canonical model metric, and \eqref{eq:psin} shows that every $\overline{H}_n$, and hence $\overline{H}$, has the same metric. Let $\sX_d$ be the normalization of the scheme-theoretic closure $W$ of $j(\overline{X}_d)$ in $\P^m_{\sO_{L,S}}$, and let $j_S:\sX_d\to\P^m_{\sO_{L,S}}$ be the induced morphism, which is finite onto $W$. Then $\sX_d$ is projective and flat over $\sO_{L,S}$.

After enlarging $S$ if necessary, the line bundle $A$ extends to a line bundle $\sA$ on $\sX_d$, with a fixed identification $\sA|_{\overline{X}_d}\simeq A$. After a further enlargement of $S$ if necessary, we may assume that the fixed isomorphism $j^*\sO(1)\simeq A^{\otimes h}$ and its inverse extend over $\sO_{L,S}$, giving $j_S^*\sO(1)\simeq\sA^{\otimes h}$. Then $\sA$ is relatively ample. The enlarged $S$ is still finite.

Let $\sR$ be the normalization of $\sX_d$ in $\Lambda$, with finite structural morphism $q_S:\sR\to\sX_d$ and generic fiber $R$. For every $\gamma\in\Gamma$, observe that its action on $R$ extends to $\sR$ and satisfies $q_S\circ\gamma=q_S$. After adding finitely many places to $S$ if necessary, the morphism $\theta$ extends to $\theta_S:\sR\to\P^{2d-2}_{\sO_{L,S}}$, and the fixed isomorphism $\theta^*\eta:q^*A\simeq\theta^*\sO(k_d)$ extends to an isomorphism $q_S^*\sA\simeq\theta_S^*\sO(k_d)$.

For every finite place $v\notin S$, the metric of $\overline{H}^{\otimes k_d}$ is the canonical model metric on $\sO(k_d)$. Then the isomorphism $q_S^*\sA\simeq\theta_S^*\sO(k_d)$ identifies the metric of $\overline{N}=\theta^*(\overline{H}^{\otimes k_d})$ with the model metric induced by $q_S^*\sA$. At $v$, every conjugate metric in the definition of $\|\cdot\|_{\mathrm{av},v}$ is the same model metric, and so is $\|\cdot\|_{\mathrm{av},v}$. Since $q^{\mathrm{an}}$ is surjective, we conclude that the $v$-adic metric of $\overline{A}$ is exactly the model metric induced by $(\sX_d,\sA)$.
\end{proof}

\subsection{Relative degree}
We prove the following relative degree formula.
\begin{proposition}\label{prop:deg}
Let $V$ be an integral subvariety of $\MPoly^d$, and let $Y$ be an integral subvariety of $X_d\times_{\MPoly^d}V$ that dominates $V$. Suppose that $G=F_d^{\circ l}|_Y$ maps $Y$ to $Y$ for some $l\geq1$. Set $s=\dim Y-\dim V$. Then $G$ is finite and surjective, with
\[
\deg G=d^{sl}.
\]
For the normalization $\nu:Y^\nu\to Y$, there is a unique finite surjective endomorphism $G^\nu:Y^\nu\to Y^\nu$ satisfying $\nu\circ G^\nu=G\circ\nu$, and we have $\deg G^\nu=d^{sl}$.

Let $D$ be an integral subvariety of $(\MPoly^d)^2$, and let $Z$ be an integral subvariety of $X_d^2\times_{(\MPoly^d)^2}D$ dominating $D$, where the fiber product uses $p_d\times p_d$. Suppose that $H=(F_d^{\circ l}\times F_d^{\circ l})|_Z$ maps $Z$ to $Z$ for some $l\geq1$. Let $\mu:Z^\nu\to Z$ be the normalization, and let $H^\nu:Z^\nu\to Z^\nu$ be the unique lift of $H$. Then
\[
\deg H^\nu=\deg H=d^{(\dim Z-\dim D)l},
\]
and both $H$ and $H^\nu$ are finite and surjective.
\end{proposition}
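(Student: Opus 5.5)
Finiteness and surjectivity come first. $F_d$ is finite (Proposition~\ref{prop:sys}), so $F_d^{\circ l}$ is finite. Hence $G=F_d^{\circ l}|_Y:Y\to Y$ is finite, since $Y$ is closed in $X_d$. A finite self-map of an integral variety has image a closed irreducible subset of the same dimension, so $G(Y)=Y$ and $G$ is surjective. The same argument applies to $H$ on $Z$.

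Next we lift to the normalization. Because $G$ is finite and dominant, $G^*$ embeds $\C(Y)\hookrightarrow\C(Y)$ as a finite extension. The normalization $\nu$ corresponds to the integral closure of the coordinate ring (cover $Y$ by affines), and integral closure is functorial for integral extensions. So $G$ lifts uniquely to a finite surjective $G^\nu$ with $\nu\circ G^\nu=G\circ\nu$. Since $\nu$ is birational, $\deg G^\nu=[\C(Y):G^*\C(Y)]=\deg G$. The same holds for $H^\nu$.

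The key step is $\deg G=d^{sl}$. We compute it on a general fiber over $V$. Since $p_d\circ F_d=p_d$, $G$ preserves the fibers $Y_\alpha=Y\cap p_d^{-1}(\alpha)$. Pass to the cover $\widetilde X_d=A_d\times S_{d-1}\to X_d$ and further to the ordered cover $A_d\times(\A^1)^{d-1}$. These are finite quotient maps by $\mu_{d-1}$ and $\mathfrak S_{d-1}$, and both are compatible with the dynamics. Take an irreducible component $\widetilde Y$ of the preimage of $Y$ in $A_d\times(\A^1)^{d-1}$. Some iterate of $\widetilde F_d^{\circ l}$, say the $k$-th, maps $\widetilde Y$ to itself, because the components are permuted. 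Degree is multiplicative and invariant under finite group quotients in the appropriate sense: $\deg(G^{\circ k})$ equals the degree of the lifted map on $\widetilde Y$, because the map on $\widetilde Y$ and $G^{\circ k}$ differ by the finite quotient, with the same degree on both sides of the commutative square. So it suffices to show the lift $\widetilde G$ has degree $d^{skl}$.

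For $\ba$ general in the preimage $\widetilde V$ of $V$, the fiber $\widetilde Y_{\ba}\subseteq(\A^1)^{d-1}$ has pure dimension $s$ (generic flatness, or dimension of general fibers). The map $\widetilde G$ restricts to $\Psi=f_{\ba}^{\times(d-1)}$ iterated $kl$ times, which is a finite map of $(\A^1)^{d-1}$ to itself of algebraic degree $d^{kl}$ in each coordinate. The fiberwise degree of $\widetilde G$ equals its generic degree, since $\widetilde G$ commutes with the projection to $A_d$ and is the identity on the base. So we must show that a finite self-map of an $s$-dimensional subvariety $W\subseteq(\A^1)^{d-1}$, induced by the split polynomial map $\Psi$ of multidegree $(D,\dots,D)$ with $D=d^{kl}$, has degree $D^s$. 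This is the main obstacle.

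To handle it, compactify in $(\P^1)^{d-1}$, where $\Psi$ extends with $\Psi^*\sO(e_i)=\sO(De_i)$. Let $\overline W$ be the closure and $L=\sum_i\sO(e_i)$ the ample bundle. The projection formula gives $\deg(\Psi|_{\overline W})\cdot(L^s\cdot\overline W)=(\Psi^*L)^s\cdot\overline W=D^s(L^s\cdot\overline W)$. This requires $\Psi(\overline W)=\overline W$, which holds because $\Psi$ is finite on $(\P^1)^{d-1}$, the polynomials being totally invariant at $\infty$. Since $L^s\cdot\overline W>0$, we get $\deg=D^s$. Summing over the components of a general fiber, which are permuted with equal degree, gives $\deg\widetilde G=d^{skl}$. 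Hence $(\deg G)^k=d^{skl}$ and $\deg G=d^{sl}$.

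For $H$, run the identical argument in $(\P^1)^{2(d-1)}$ over $D$, with relative dimension $\dim Z-\dim D$.
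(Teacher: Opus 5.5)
Your proposal follows the same strategy as the paper. You pass to a finite cover on which the fiber dynamics is split and choose a periodic component $\widetilde{Y}$, so that $\deg\widetilde{G}=(\deg G)^k$ by multiplicativity. You then compute $\deg\widetilde{G}$ by the projection formula on a fiberwise compactification carrying an ample $\mathcal{L}$ with $\Psi^*\mathcal{L}\simeq\mathcal{L}^{\otimes D}$. Going up to the ordered cover and using $(\P^1)^{d-1}$ with $\bigotimes_i\pr_i^*\sO(1)$ is a harmless variation. The paper instead stays on $A_d\times S_{d-1}$ and uses $\Sym^{d-1}(\P^1)\simeq\P^{d-1}$, at the cost of checking that $\Sym^{d-1}(f)$ pulls the hyperplane at infinity back to $d$ times itself. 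Your finiteness, surjectivity and normalization steps also match the paper.

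The one step that is not justified is the passage to a general \emph{closed} fiber $\widetilde{Y}_{\ba}$. The sentence ``the fiberwise degree of $\widetilde{G}$ equals its generic degree, since $\widetilde{G}$ commutes with the projection and is the identity on the base'' is not an argument. Your later claim that the components of $\widetilde{Y}_{\ba}$ are permuted ``with equal degree'' rests on it. Note also that a single component $W$ of the fiber need not satisfy $\Psi(\overline{W})=\overline{W}$; only the whole fiber is invariant.

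Both claims are true in characteristic $0$, but they need proof. For instance:
\begin{itemize}
\item Take a dense open $U'\subseteq\widetilde{Y}$ over which the finite map $\widetilde{G}$ is flat of degree $n=\deg\widetilde{G}$.
\item For general $\ba$, the fiber $\widetilde{Y}_{\ba}$ is reduced of pure dimension $s$ and meets $U'$ and $\widetilde{G}^{-1}(U')$ densely, so $(\widetilde{G}_{\ba})_*[\widetilde{Y}_{\ba}]=n[\widetilde{Y}_{\ba}]$.
\item The projection formula applied to this cycle in $(\P^1)^{d-1}$ then gives $n=D^s$.
\end{itemize}

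The paper sidesteps all of this by working over the generic point $\Spec K$ of the parameter image instead of a closed point. The generic fiber is integral with the same function field as the chosen component of the cover. Its closure is therefore a single invariant irreducible subvariety, and the equality of its degree with the degree of the lifted map is tautological. If you replace your general closed fiber by the generic fiber, using the monic centered polynomial over $K$ and $(\P^1_K)^{d-1}$, your proof becomes complete.
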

\begin{proof}
For $a\in\{1,2\}$, write $(B,W,T)=(V,Y,G)$ or $(D,Z,H)$, respectively. Set $t=\dim W-\dim B$. By Proposition~\ref{prop:sys}, $T:W\to W$ is finite, hence the endomorphism $T:W\to W$ is also surjective. Let $q:A_d\times S_{d-1}\to X_d$ be the quotient as in \S~\ref{sec:2.1}. The morphism $(\widetilde{F}_d^{\circ l})^{\times a}$ sends components of $(q^{\times a})^{-1}(W)$ dominating $W$ to such components. Choose a $(\widetilde{F}_d^{\circ l})^{\times a}$-periodic component $W'$ of $(q^{\times a})^{-1}(W)$ dominating $W$, and let $m$ be its exact period. Define
\[
\pi=q^{\times a}|_{W'}\quad\text{and}\quad T'=(\widetilde{F}_d^{\circ lm})^{\times a}|_{W'}.
\]
Then $\pi\circ T'=T^{\circ m}\circ\pi$, and $\pi:W'\to W$ is finite and dominant.

Let $B'\subseteq A_d^a$ be the closure of the image of $W'$ under the parameter projection, and let $\eta=\Spec K$ be its generic point. Since $B'\to B$ is finite and dominant, $\dim W'_\eta=t$. Let $f_1,\ldots,f_a\in K[z]$ be the monic centered polynomials corresponding to $\eta$.

Fix the isomorphism $\Sym^{d-1}(\P^1_K)\simeq\P^{d-1}_K$ determined by
\[
\sum_{i=1}^{d-1}[u_i:v_i]\mapsto[x_0:\cdots:x_{d-1}],\quad\prod_{i=1}^{d-1}(v_i Z_0-u_i Z_1)=\sum_{j=0}^{d-1}(-1)^j x_jZ_0^{d-1-j}Z_1^j.
\]
The chart $\{x_0\ne0\}\subseteq\P^{d-1}_K$ identifies with $S_{d-1,K}$. For $\infty=[1:0]\in\P^1_K$, the hyperplane $H_\infty=\{x_0=0\}\subseteq\P^{d-1}_K$ consists exactly of effective divisors on $\P^1_K$ of degree $d-1$ whose support contains $\infty$.

For $1\leq i\leq a$, define the morphism $\tau_i=\Sym^{d-1}(f_i)$ by
\[
\tau_i:\Sym^{d-1}(\P^1_K)\to\Sym^{d-1}(\P^1_K),\,\sum_{j=1}^{d-1}[z_j]\mapsto\sum_{j=1}^{d-1}[f_i(z_j)].
\]
Let $\chi:(\P^1_K)^{d-1}\to\Sym^{d-1}(\P^1_K)$ be the quotient morphism. For each $1\leq j\leq d-1$, let $\pr_j:(\P^1_K)^{d-1}\to\P^1_K$ be the $j$-th coordinate projection, and define the effective Cartier divisor $E_j=\pr_j^*[\infty]$. Then $\tau_i\circ\chi=\chi\circ f_i^{\times(d-1)}$ for every $1\leq i\leq a$, and $\chi^*H_\infty=\sum_{j=1}^{d-1}E_j$. Observe that
\[
\chi^*(\tau_i^*H_\infty)=\left(f_i^{\times(d-1)}\right)^*\left(\sum_{\nu=1}^{d-1}E_\nu\right)=d\sum_{\nu=1}^{d-1}E_\nu=\chi^*(dH_\infty).
\]
Since $\chi$ is finite and surjective, $\tau_i^*H_\infty=dH_\infty$ and $\tau_i^*\sO(1)\simeq\sO(d)$. Thus, $\tau_i$ is finite.

Define
\begin{align*}
&\sP=\left(\Sym^{d-1}(\P^1_K)\right)^a\simeq(\P^{d-1}_K)^a,\\
&\sT:\sP\to\sP,\,(D_1,\ldots,D_a)\mapsto\left(\tau_1^{\circ lm}(D_1),\ldots,\tau_a^{\circ lm}(D_a)\right),\\
&\sL=\bigotimes_{i=1}^a\pr_i^*\sO(1),
\end{align*}
where $\pr_i:\sP\to\P^{d-1}_K$ are the projections. The morphism $\sT$ is finite, and $\sT^*\sL\simeq\sL^{\otimes d^{lm}}$.

Let $C=\overline{W'_\eta}\subseteq\sP$ have its reduced structure. Since $T'$ is finite and dominant, the base change $T'_\eta$ is surjective. Moreover, $\sT|_{W'_\eta}=T'_\eta$. Note that
\[
\sT(C)=\overline{\sT(W'_\eta)}=\overline{T'_\eta(W'_\eta)}=\overline{W'_\eta}=C.
\]
Then $u=\sT|_C:C\to C$ is finite and surjective. The varieties $W'$ and $C$ have the same function field, and $T'$ and $u$ induce the same field homomorphism. Thus, $\deg u=\deg T'$ and $u_*[C]=(\deg T')[C]$. Since $C$ is integral of dimension $t$ and $\sL|_C$ is ample, $(\sL^t\cdot C)>0$. The projection formula shows
\[
\deg T'=\deg u=\frac{((\sT^*\sL)^t\cdot C)}{(\sL^t\cdot C)}=\frac{d^{tlm}(\sL^t\cdot C)}{(\sL^t\cdot C)}=d^{tlm}.
\]
From $\pi\circ T'=T^{\circ m}\circ\pi$, we obtain $\deg T'=(\deg T)^m$. Therefore, $\deg T=d^{tl}$ in both cases.

Write $W=\Spec R$, and let $R^\nu$ be its integral closure in $\operatorname{Frac}(R)$. Integrality implies $T^*(R^\nu)\subseteq R^\nu$. Note that $R^\nu$ is finite over $T^*(R^\nu)$. Thus, the induced endomorphism of $\Spec R^\nu$ is finite and dominant. Its degree equals $\deg T$ because normalization preserves function fields.
\end{proof}

\section{Intertwined polynomials and small-point correspondences}\label{sec:intcorr}
In \S~\ref{sec:int}, we define and study the intertwined relation. In \S~\ref{sec:tube}, we adapt the correspondence method of \cite[\S~6]{JSX26} to Galois-invariant sets of small points contained in a closed diagonal or Frobenius tube. In \S~\ref{sec:local}, we establish the local estimates in the wild and tame cases.
\subsection{Intertwined polynomials}\label{sec:int}
Fix an integer $d\geq2$. A polynomial $f\in\C[z]$ of degree $d$ is exceptional in the sense of Definition~\ref{def:excep} if and only if it is affinely conjugate to $z^d$, $T_d$, or $-T_d$. The exceptional locus in $\MPoly^d(\C)$ is 
\[
E_d=\left\lbrace[z^d],[T_d],[-T_d]\right\rbrace\subseteq\MPoly^d(\C).
\]
For even $d$, we have $[T_d]=[-T_d]$ and $\# E_d=2$. For odd $d$, $\# E_d=3$.

Two polynomials $f,g\in\C[z]$ of degree $d$ are called \emph{intertwined}, written $f\approx g$, if there is a reduced algebraic curve $Z\subseteq\A^2_\C$ such that $(f,g)(Z)\subseteq Z$ and each irreducible component of $Z$ projects surjectively onto both axes. Here
\[
(f,g):\A^2_\C\to\A^2_\C,\,(x,y)\mapsto(f(x),g(y)).
\]
By \cite[Theorem~3.39]{FG22}, this condition is equivalent to the existence of $N\in\Z_{>0}$ and $u,v,h\in\C[z]\setminus\C$ such that
\begin{equation}\label{eq:semi}
f^{\circ N}\circ u=u\circ h\quad\text{and}\quad g^{\circ N}\circ v=v\circ h.
\end{equation}
The same theorem shows that the intertwined relation is invariant under affine conjugacy, defines an equivalence relation on $\MPoly^d_\C$, and preserves exceptionality.

We will need the following proposition bounding the cardinalities of intertwined classes, which follows directly from \cite[Theorem~3.46]{FG22}:
\begin{proposition}\label{prop:Md}
Let $d\geq2$ be an integer. Then there exists an integer $M_d\geq1$, depending only on $d$, such that every intertwined class in $\MPoly^d(\C)$ has at most $M_d$ elements.
\end{proposition}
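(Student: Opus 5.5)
The bound will be read off from \cite[Theorem~3.46]{FG22}, which controls intertwined classes uniformly in the degree; the only work is translating that statement into a bound on the number of points of $\MPoly^d(\C)$. Favre--Gauthier show that every polynomial $f$ of degree $d$ which is not exceptional admits a canonical decomposition ``up to intertwining''. Concretely, the class of $f$ is determined by a primitive non-exceptional $g$ with $f^{\circ N}$ semiconjugate to $g$-type data as in \eqref{eq:semi}. Moreover, the polynomials intertwined with $f$ are parametrized by finitely many combinatorial choices: decompositions of an iterate into indecomposable factors, together with twists by the finite symmetry group $\Sigma(f)\subseteq\mu_{d-1}$. The number of such choices is bounded solely in terms of $d$.

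The plan is in three steps.

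\textbf{Step 1 (exceptional classes).} By \cite[Theorem~3.39]{FG22}, intertwining preserves exceptionality. Every exceptional class is therefore contained in $E_d$, so it has at most $3$ elements.

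\textbf{Step 2 (non-exceptional classes).} Here I quote \cite[Theorem~3.46]{FG22}. It asserts that for non-exceptional $f$ of degree $d$, the set of affine conjugacy classes $[g]$ with $g\approx f$ is finite, of cardinality bounded by a constant $B(d)$ depending only on $d$. This holds because every such $g$ is obtained from a fixed ``Ritt-normal'' representative via a bounded number of Ritt moves and rotations by $(d-1)$-st roots of unity. Should the cited theorem state only finiteness with an explicit dependence on the decomposition length, I would bound that length by $\log_2 d$, since the factors in a decomposition have degrees multiplying to $d$. I would then bound the number of Ritt rearrangements by the number of ordered factorizations of $d$ times the number of rotations, giving a bound in terms of $d$ alone.

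\textbf{Step 3 (conclusion).} Setting $M_d=\max\{3,B(d)\}$ gives the claim. Since $\approx$ descends to $\MPoly^d(\C)$ by affine invariance, the statement holds on $\MPoly^d(\C)$.

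The only genuine obstacle is Step 2: checking that the cited theorem's bound is truly uniform in $f$ rather than depending on $f$. I expect this to be immediate from the form of \cite[Theorem~3.46]{FG22}.
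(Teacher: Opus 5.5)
Your proposal is correct and matches the paper, which gives no argument beyond citing \cite[Theorem~3.46]{FG22} for the uniform bound. Your explicit treatment of the exceptional classes (contained in $E_d$, hence of size at most $3$, because intertwining preserves exceptionality) fills in a case the paper leaves implicit; the hedged fallback in Step~2 is unnecessary and, as sketched via ordered factorizations of $d$, would not be a proof on its own, since the relevant decompositions are those of iterates $f^{\circ N}$ and of the semiconjugacies $u,v,h$.
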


In the following proposition, we show that the intertwined relation is constructible and prove a property relating it to rational functions. 
\begin{proposition}\label{prop:rel}
\textup{(1)} The intertwined relation $\approx$ is a constructible subset of $(\MPoly^d)^2$, defined over $\Q$.

\textup{(2)} Let $V\subseteq\MPoly^d$ be a positive-dimensional geometrically integral subvariety, defined over a number field $L$. Then there exists $u_V\in L(V)\setminus L$ such that, for every irreducible component $Z$ of the closure of the intertwined relation in $V\times_L V$ whose two projections $p_1,p_2:Z\to V$ are dominant, one has
\[
p_1^*(u_V)=p_2^*(u_V)\quad\text{in }L(Z).
\]
\end{proposition}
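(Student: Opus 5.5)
The plan is to prove (1) by writing $\approx$ as an increasing union of constructible sets and showing the union stabilizes. Then (2) follows from (1) and Proposition~\ref{prop:Md} by a symmetric-function construction over the generic point of $V$.

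For (1), for each $n\geq1$ let $E_n\subseteq(\MPoly^d)^2$ be the set of pairs $([f],[g])$ admitting a reduced curve $Z\subseteq\A^2$ of degree $\leq n$ such that $(f,g)(Z)\subseteq Z$ and every component of $Z$ surjects onto both axes.
\begin{itemize}
\item Curves of degree $\leq n$ are parametrized by a finite-type Chow/Hilbert scheme over $\Q$.
\item The conditions "$(f,g)(Z)\subseteq Z$" and "each component surjects onto both axes" are constructible conditions on $A_d^2\times\mathrm{Hilb}_n$.
\item Chevalley's theorem, applied to the projection to $A_d^2$ and then to the finite quotient to $(\MPoly^d)^2$, shows that $E_n$ is constructible and defined over $\Q$.
\end{itemize}
Clearly ${\approx}=\bigcup_n E_n$. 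It then remains to show that $E_n$ stabilizes. I would get this from the structure theory behind \cite[Theorems~3.39 and~3.46]{FG22}, namely the Ritt/Medvedev--Scanlon description of invariant curves of split polynomial maps. The aim is to show that whenever $f\approx g$, one may choose $N$ and $u,v,h$ in \eqref{eq:semi} (equivalently, an invariant curve $Z$) with degree bounded by a constant $n_d$ depending only on $d$. Then ${\approx}=E_{n_d}$. This uniform degree bound is the main obstacle. If it cannot be read off directly, I would fall back on Noetherian induction on closed subsets $W\subseteq\MPoly^d$. On a dense open subset of $W$, the finitely many pairs in the class of the generic point spread out from some $E_n$. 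Uniform finiteness (Proposition~\ref{prop:Md}) then limits how many new partners can appear on special fibres, which should force the union to stabilize.

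For (2), let $\eta$ be the generic point of $V$ and $\overline{\eta}$ a geometric point over it, with field $\overline{L(V)}$. By (1), the intertwined class $\mathcal{C}$ of $\overline{\eta}$ in $\MPoly^d(\overline{L(V)})$ is defined over $L$. By Proposition~\ref{prop:Md}, it is finite with at most $M_d$ elements, and it is stable under $\Gal(\overline{L(V)}/L(V))$. Its members lying in $V$ form a finite Galois-stable subset $\mathcal{C}_V\subseteq V(\overline{L(V)})$ containing $\overline{\eta}$. Choose any $g\in L(V)\setminus L$, e.g. a coordinate function nonconstant on $V$. The elements of $\mathcal{C}_V$ are generic points of $V$, so $g$ is regular at each of them, and the polynomial $P(T)=\prod_{\beta\in\mathcal{C}_V}(T-g(\beta))$ has coefficients in $L(V)$ by Galois stability. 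If all coefficients lay in $\overline{L}\cap L(V)=L$ (using geometric integrality of $V$), then $g=g(\overline{\eta})$ would be algebraic over $L$, contradicting $g\notin L$. So some elementary symmetric function $u_V$ of the values $\{g(\beta)\}$ lies in $L(V)\setminus L$.

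Now let $Z$ be a component of the closure of $\approx$ in $V\times V$ with both projections dominant. Since $\approx$ is constructible and dense in $Z$, the generic point $\zeta=(\eta_1,\eta_2)$ of $Z$ lies in $\approx$, and both $\eta_1$ and $\eta_2$ are generic points of $V$. By transitivity of $\approx$, the two points have the same intertwined class. Hence the finite sets $\mathcal{C}_V(\eta_1)$ and $\mathcal{C}_V(\eta_2)$ coincide over $\overline{L(Z)}$. The value of $u_V$ at either point is the same symmetric function of that common set, which gives $p_1^*u_V=p_2^*u_V$ in $L(Z)$.
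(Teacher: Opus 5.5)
\textbf{Part (1) has a genuine gap, at exactly the point you flag.} The reduction to the sets $E_n$ and Chevalley's theorem is fine. But ${\approx}=\bigcup_n E_n$ is only a countable increasing union of constructible sets, and you do not establish the uniform bound that makes it stabilize. The paper does not derive this bound; it quotes it from \cite[Theorems~3.43 and~3.44]{FG22}. These give $N_d,J_d$, depending only on $d$, such that $f\approx g$ implies \eqref{eq:semi} with $N\leq N_d$ and $\deg u,\deg v\leq J_d$ (so $\deg h=d^N$). Then $\approx$ is the image in $(\MPoly^d)^2$ of finitely many closed loci $I_{N,e_1,e_2}$ of solutions $(f,g,u,v,h)$, and Chevalley's theorem gives (1) directly. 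In your formulation, the invariant curve $\bigcup_{k<N}(f,g)^{\circ k}\bigl((u,v)(\A^1)\bigr)$ then has degree bounded in terms of $d$, so ${\approx}=E_{n_d}$.

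Your fallback cannot replace this input. Proposition~\ref{prop:Md} bounds how many partners a point has, not where the points with extra partners lie. Consider a hypothetical relation consisting of the diagonal together with the pairs $(x_n,y_n),(y_n,x_n)$, for a Zariski-dense sequence $(x_n)$ whose partner is realized only by curves of degree $\geq n$. All its classes have size $\leq 2$ and it is an increasing union of constructible sets, yet it is not constructible. In your Noetherian induction, the points of the dense open subset that acquire partners not specializing from those of the generic point could form exactly such a dense countable set. Then you cannot pass to a proper closed subset, and ruling this out is precisely the content of the degree bound.

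\textbf{Part (2) is essentially the paper's argument}: take symmetric functions of the values of a nonconstant function on the finite, Galois-stable intertwined class of the generic point, and compare them at the two generic points of $Z$. Two details should be supplied.
\begin{enumerate}
\item Your claim that every member of $\mathcal{C}_V$ is a generic point of $V$ needs justification. It holds because both projections of ${\approx}\cap V^2$ have finite fibres, so the $L$-closure of $(\overline{\eta},\beta)$ has dimension $\dim V$ and dominates both factors. The paper sidesteps this by keeping only class members in an open $U$ on which the chosen function is regular.
\item You should note that the class of $\eta_i$ over $\overline{L(Z)}$ is the image of the class of $\overline{\eta}$ under an embedding $\overline{L(V)}\hookrightarrow\overline{L(Z)}$ extending $p_i^*$. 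This holds because finite constructible sets gain no new points under extension of algebraically closed fields, and it is what identifies $p_i^*u_V$ with your symmetric function.
\end{enumerate}
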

\begin{proof}
\textup{(1)} By \cite[Theorems~3.43 and 3.44]{FG22}, there are integers $N_d,J_d\geq1$, depending only on $d$, such that \eqref{eq:semi} holds with
\[
N\leq N_d\quad\text{and}\quad\deg u,\deg v\leq J_d
\]
whenever $f\approx g$. For $1\leq N\leq N_d$ and $1\leq e_1,e_2\leq J_d$, let
\[
I_{N,e_1,e_2}\subseteq (\Poly^d)^2\times\Poly^{e_1}\times\Poly^{e_2}\times\Poly^{d^N}
\]
be the locus of $(f,g,u,v,h)$ defined by \eqref{eq:semi}, where $(\deg u,\deg v,\deg h)=(e_1,e_2,d^N)$. Clearly, $I_{N,e_1,e_2}$ is a closed subset defined over $\Q$. Let $\Psi:\Poly^d\to\MPoly^d$ be the quotient. Then the intertwined relation in $(\MPoly^d)^2$ is
\[
\sI=\bigcup_{\substack{1\leq N\leq N_d\\1\leq e_1,e_2\leq J_d}}(\Psi\times\Psi)\left(\pr_{12}(I_{N,e_1,e_2})\right),
\]
where $\pr_{12}:(\Poly^d)^2\times\Poly^{e_1}\times\Poly^{e_2}\times\Poly^{d^N}\to (\Poly^d)^2$ is the projection to the first two factors. Then the conclusion of \textup{(1)} follows from Chevalley's theorem.

\textup{(2)} Let $M_d$ be as in Proposition~\ref{prop:Md}. Set $F=L(V)$. Since $\operatorname{trdeg}_L F=\dim V>0$, choose $x\in F$ transcendental over $L$ and a nonempty affine $L$-open $U\subseteq V$ on which $x$ is regular. Let $\eta\in U(F)$ be the generic point. Fix an algebraic closure $\overline{F}$ of $F$ inside $\C$. The set
\[
S_\eta=\{z\in U(\overline{F}):z\approx\eta\}
\]
contains $\eta$, has at most $M_d$ elements, and is $\Gal(\overline{F}/F)$-invariant. Therefore,
\[
P_\eta(T):=\prod_{z\in S_\eta}(T-x(z))\in F[T],\quad P_\eta(x)=0.
\]
If every coefficient belongs to $L$, this monic polynomial makes $x$ algebraic over $L$, a contradiction. Choose a coefficient $u_V\in F\setminus L$.

Let $Z$ be as in the statement. Set $E=L(Z)$ and fix an algebraic closure $\overline{E}$ of $E$ inside $\C$. The embeddings $p_i^*:F\hookrightarrow E\hookrightarrow\overline{E}$ define points $\eta_i\in U(\overline{E})$. By \textup{(1)}, a nonempty open subset of $Z$ is contained in the intertwined relation, so $\eta_1\approx\eta_2$. Base change of the finite generic fibers implies
\[
p_1^*P_\eta(T)=\prod_{\substack{z\in U(\overline{E})\\z\approx\eta_1}}(T-x(z))=\prod_{\substack{z\in U(\overline{E})\\z\approx\eta_2}}(T-x(z))=p_2^*P_\eta(T),
\]
where $p_i^*$ acts coefficientwise on $P_\eta(T)$. Both polynomials belong to $E[T]$. Comparing the coefficient chosen as $u_V$ therefore gives $p_1^*(u_V)=p_2^*(u_V)$ in $L(Z)$, as required.
\end{proof}

\subsection{Closed tubes}\label{sec:tube}
Let $L$ be a number field and $v$ a finite place of $L$ with $L_v=\Q_p$. Let $X$ be a normal, projective, and geometrically integral variety defined over $L$, with $\dim X=n\geq1$. Let $\overline{A}$ be a nef adelic line bundle on $X$, with $A$ ample. Suppose its metric at $v$ is induced by a relatively ample line bundle $\sA$ extending $A_{L_v}$ on a normal projective flat model $\sX/\Z_p$. Suppose that $X_0:=\sX_{\F_p}$ is geometrically integral.

Choose $b\in\Z_{>0}$ such that $\sA^{\otimes b}$ is relatively very ample, and sections $s_0,\ldots,s_N\in H^0(\sX,\sA^{\otimes b})$ defining a closed immersion $j:\sX\hookrightarrow\P^N_{\Z_p}$ with $j^*\sO(1)\simeq\sA^{\otimes b}$. Let $[x_0:\cdots:x_N]$ and $[y_0:\cdots:y_N]$ denote the coordinates on the first and second factors of $\P^N_{L_v}\times_{L_v}\P^N_{L_v}$.

For $0\leq i\leq N$, let $E_i$ be the affinoid domain of $(X_{L_v}\times_{L_v}X_{L_v})^{\mathrm{an}}$ where
\[
x_i y_i\ne0\quad\text{and}\quad\left|x_j/x_i\right|,\left|y_j/y_i\right|\leq1\quad(0\leq j\leq N).
\]
We use the Berkovich analytification. For $e\in\{1,p\}$ and $0<\delta<1$, define
\[
T_e(\delta)=\bigcup_{i=0}^N\left\{z\in E_i:\max_{0\leq j\leq N}\left|\frac{y_j}{y_i}-\left(\frac{x_j}{x_i}\right)^e\right|(z)\leq\delta\right\}.
\]
Then $T_e(\delta)$ is closed. Let $F_1=\id_{X_0}$, let $F_p$ be the absolute Frobenius of $X_0$, and let
\[
\Gamma_e\subseteq X_0\times_{\F_p}X_0
\]
be the graph of $F_e$. Then $\red(T_e(\delta))\subseteq|\Gamma_e|$, where $\red$ is the reduction for the proper model $\sX\times_{\Z_p}\sX$.

The following proposition extends the correspondence argument of \cite[Theorem~6.6 and the proof of Lemma~6.14]{JSX26} to Galois-invariant sets of small points.
\begin{proposition}\label{prop:tube}
With the notation and hypotheses above, fix $e\in\{1,p\}$ and $0<\delta<1$. Let
\[
B\subseteq(X\times_LX)(\olQ)\cap T_e(\delta)
\]
be $\Gal(\olQ/L)$-invariant. Define
\[
B_\epsilon=\left\lbrace(x,y)\in B\colon h_{\overline{A}}(x)+h_{\overline{A}}(y)<\epsilon\right\rbrace\quad(\epsilon>0)\quad\text{and}\quad Z(B)=\bigcap_{n=1}^\infty\overline{B_{1/n}},
\]
where the closures are taken in $(X\times_LX)_{\olQ}$. Suppose that $C$ is an irreducible component of $Z(B)$ such that the two projections $p_1,p_2:C\to X_{\olQ}$ are dominant. Let $\nu:W\to C$ be the normalization, and set $\pi_i=p_i\circ\nu:W\to X_{\olQ}$ for $1\leq i\leq 2$.

Then $p_1,p_2,\pi_1,\pi_2$ are finite and surjective, and
\begin{equation}\label{eq:num}
\pi_2^*A_{\olQ}\equiv e\pi_1^*A_{\olQ}.
\end{equation}
\end{proposition}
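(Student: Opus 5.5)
We will follow the strategy of \cite[Theorem~6.6]{JSX26}: equidistribution on $C$ forces the reduction of $C$ to lie in the graph $\Gamma_e$, and an intersection-number comparison on the special fiber then gives \eqref{eq:num}.

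\textbf{Step 1: equidistribution on $C$.} Since $C$ is an irreducible component of $Z(B)=\bigcap_n\overline{B_{1/n}}$, we pick a generic sequence $(z_k)$ of points of $B$ lying on $C$ with $h_{\overline{A}}(x_k)+h_{\overline{A}}(y_k)\to0$, where $z_k=(x_k,y_k)$. The adelic line bundle $\overline{M}=p_1^*\overline{A}+p_2^*\overline{A}$ is nef on $X\times X$ and restricts to an ample line bundle on $C$ (more precisely, on its descent to a finite extension $L'$, which does not change $L_v$-completions up to a finite extension). The height $h_{\overline{M}}(C)$ is $\geq0$, and the essential minimum argument (Zhang's successive minima) gives $h_{\overline{M}}(C)=0$, since the small points are dense in $C$. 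Yuan's equidistribution theorem \cite{Yuan2008} then shows that the Galois orbits of $z_k$ equidistribute at $v$ to $c_1(\overline{M}|_C)_v^{\dim C}/\deg_M(C)$. Since $\overline{M}$ is a model metric at $v$, the Chambert-Loir measure \cite{CL2006} is a combination of Dirac masses at the divisorial points corresponding to irreducible components $E$ of the special fiber of the closure $\mathcal{C}$ of $C$ in $\sX\times\sX$, with weights $\deg_{\sM}(E)>0$. By Galois invariance of $B$, all Galois conjugates lie in $T_e(\delta)$, which is closed. Hence every such divisorial point lies in $T_e(\delta)$, and so $\red$ maps the generic point of every component $E$ into $\Gamma_e$. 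Therefore $|\mathcal{C}_{\F_p}|\subseteq\Gamma_e$.

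\textbf{Step 2: finiteness of the projections.} We have $\dim\mathcal{C}_{\F_p}=\dim C$. Since $p_1|_{\Gamma_e}$ is an isomorphism onto $X_0$, the special fiber maps finitely to $X_0$ under both projections; for $e=p$ we use that Frobenius is finite. We must show that $p_1,p_2\colon C\to X$ are finite. This is where we expect the main subtlety. We argue by upper semicontinuity of fiber dimension for the proper morphism $\mathcal{C}\to\sX$: a positive-dimensional fiber over a generic-fiber point would specialize, in the proper flat closure, to a positive-dimensional set in a fiber of $\mathcal{C}_{\F_p}\to X_0$, which is impossible. Indeed, the closure of a positive-dimensional fiber is a flat family whose special fiber is positive-dimensional and contained in a fiber of $p_1$. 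Dominance together with properness then gives surjectivity, and finiteness passes to the normalization $W$. In particular $\dim C=n$.

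\textbf{Step 3: numerical equivalence.} Set $D=\pi_2^*A-e\pi_1^*A$ on $W$. We take the normalization $\mathcal{W}$ of $\mathcal{C}$ and note that $\pi_1^*\sA$ and $\pi_2^*\sA$ extend over $\mathcal{W}$. On each component of $\mathcal{W}_{\F_p}$, which maps into $\Gamma_e$, we have $F_e^*\sA_0\cong\sA_0^{\otimes e}$; for Frobenius this is the standard identity $F_p^*\mathcal{L}=\mathcal{L}^{p}$. Hence $\pi_2^*\sA_0\cong\pi_1^*\sA_0^{\otimes e}$ on $\mathcal{W}_{\F_p}$. For any integral curve $\Sigma\subseteq W$, its closure in $\mathcal{W}$ is flat over $\Z_p$, so degrees are constant in the family: $\deg(D|_\Sigma)$ equals the degree on the special fiber of the closure, which is $0$ by the isomorphism just established. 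Curves defined over $\olQ$ suffice, and to handle curves over larger fields we argue by specialization, or by intersecting with hyperplanes. Note that numerical equivalence only needs curves.

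The main obstacle is Step 1, namely pinning down that the support of the Chambert-Loir measure meets every component of the special fiber. This requires using that $C$ has a flat model whose special-fiber components all carry positive $\sM$-degree, and that Galois conjugates of points in $B$ (taken over $\olQ$ versus $\C_v$ via $\bar v$) stay in the tube. Galois invariance of $B$ is exactly what provides the latter.
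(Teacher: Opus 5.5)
Your proposal is correct and follows essentially the same route as the paper: a generic sequence of small points on $C$, Yuan's equidistribution against the Chambert-Loir model measure to force the whole special fiber into $\Gamma_e$, finiteness of both projections via upper semicontinuity of fiber dimension and properness over $\Z_p$, and specialization of degrees of curves to obtain \eqref{eq:num}. Two details need tightening. First, the paper applies Chambert-Loir's formula on the normalized model $\sW$, where each special-fiber component has a unique divisorial preimage, rather than on the closure $\mathcal{C}$. Second, the isomorphism $\pi_2^*\sA\cong\pi_1^*\sA^{\otimes e}$ is only available on the reduced special fiber of $\mathcal{W}$, not on $\mathcal{W}_{\F_p}$ itself, so the degree on the special fiber of a curve's closure should be computed through its fundamental cycle with multiplicities.
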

\begin{proof}
By Noetherianity, there is a positive integer $n_0\geq1$ such that $Z(B)=\overline{B_{1/n}}$ for all $n\geq n_0$. Then 
\begin{equation}\label{eq:BepsC}
\overline{B_\epsilon\cap C}=C
\end{equation}
for all $\epsilon>0$.

Take a finite extension $L'/L$ over which $C$ is defined, and a place $w$ above $v$. Base-change the global data to $L'$, and write $(L,v)$ for $(L',w)$ in the remainder of the proof. Set
\[
R=\sO_{L_v},\quad\kappa=R/\mathfrak m_R,\quad\text{and}\quad m=\dim W=\dim C\geq1.
\]
Denote the base change of the original graph $\Gamma_e$ over $\F_p$ to $\kappa$ by $\Gamma_{e,\kappa}=\Gamma_e\times_{\F_p}\kappa$. For $e=p$, the graph of the morphism
\[
F_{p,\kappa}=F_{X_0}\times_{\F_p}\id_\kappa:X_0\times_{\F_p}\kappa\to X_0\times_{\F_p}\kappa
\]
is $\Gamma_{p,\kappa}$. The line bundle $\sA_\kappa$ comes from $\F_p$, so $F_{e,\kappa}^*\sA_\kappa\simeq\sA_\kappa^{\otimes e}$.

Define
\[
\overline{H}=\pi_1^*\overline{A}\otimes\pi_2^*\overline{A},
\]
which is an adelic line bundle on $W/L$. Note that $\overline{H}$ is nef and $H$ is ample. With the help of \eqref{eq:BepsC}, since $\olQ$ is countable and $m\geq1$, it is clear that there is a generic sequence
\[
(z_j)_{j=1}^\infty\subseteq\nu^{-1}(B\cap C)
\]
in $W/\olQ$ such that
\[
0\leq h_{\overline{H}}(z_j)<1/j\quad(j\geq1).
\]
Then the fundamental inequality \cite[Theorem~1.10]{Zhang1995} shows
\[
0\leq h_{\overline{H}}(W)\leq\liminf_{j\to\infty}h_{\overline{H}}(z_j)=0.
\]

Let $\sW$ be the normal projective flat $R$-model obtained by normalizing the scheme-theoretic closure of $C_{L_v}$ in $\sX_R\times_R\sX_R$. Let
\[
\Phi=(\Pi_1,\Pi_2):\sW\to\sX_R\times_R\sX_R
\]
be the corresponding finite morphism, which extends $\phi=(\pi_1,\pi_2)_{L_v}$. The line bundle
\[
\sH=\Pi_1^*\sA_R\otimes\Pi_2^*\sA_R
\]
is relatively ample and induces the metric of $\overline{H}$ at $v$. Write
\[
[\sW_\kappa]=\sum_{k=1}^s a_k[Y_k],
\]
where $Y_k$ are the integral components and $a_k\in\Z_{>0}$ their multiplicities. For $1\leq k\leq s$, let $\eta_k$ be the generic point of $Y_k$ and $\xi_k\in W_{L_v}^{\mathrm{an}}$ the unique divisorial analytic point satisfying $\red_{\sW}(\xi_k)=\eta_k$; see \cite[\S~2.3]{CL2006}. The model measure formula \cite[D\'efinition~2.4]{CL2006} shows
\[
\mu_v
:=
\frac{c_1(\overline{H})_v^m}{(H^m\cdot W)}
=
\frac{1}{(H^m\cdot W)}
\sum_{k=1}^s
a_k(\sH^m\cdot Y_k)\delta_{\xi_k},
\]
where $\delta_z$ denotes the Dirac measure at $z$. Note that $a_k(\sH^m\cdot Y_k)>0$ for $1\leq k\leq s$ because $\sH$ is relatively ample. For $j\geq1$, set $O_j=\Gal(\olQ/L)\cdot z_j$, and define the probability measure
\[
\mu_j=\frac{1}{\#O_j}\sum_{z\in O_j}\delta_z
\]
on $W_{L_v}^{\mathrm{an}}$. Yuan's equidistribution theorem \cite[Theorem~3.1]{Yuan2008} implies that
\[
\mu_j\to\mu_v\quad(j\to\infty)
\]
weakly on $W_{L_v}^{\mathrm{an}}$. Since $B$ is Galois-invariant, for $j\geq1$, the support $\Supp(\mu_j)$ is contained in the closed subset $(\phi^{\mathrm{an}})^{-1}(T_e(\delta))$. Thus, we have $\Supp(\mu_v)\subseteq(\phi^{\mathrm{an}})^{-1}(T_e(\delta))$, and then
\begin{equation}\label{eq:xitube}
\phi^{\mathrm{an}}(\xi_k)\in T_e(\delta)\quad(1\leq k\leq s).
\end{equation}

For $1\leq k\leq s$, we have
\[
\Phi_\kappa(\eta_k)=\Phi_\kappa(\red_{\sW}(\xi_k))=\red_{\sX\times\sX}\left(\phi^{\mathrm{an}}(\xi_k)\right),
\]
where $\Phi_\kappa:\sW_\kappa\to\sX_\kappa\times_\kappa\sX_\kappa$. By the definition of $T_e(\delta)$, we obtain $\red_{\sX\times\sX}(T_e(\delta))\subseteq\Gamma_{e,\kappa}$ as sets. Thus, \eqref{eq:xitube} yields
\begin{equation}\label{eq:etak}
\Phi_\kappa(\eta_k)\in\Gamma_{e,\kappa}\quad(1\leq k\leq s).
\end{equation}
Set $W_0=(\sW_\kappa)_{\red}=\bigcup_{k=1}^sY_k$. For $1\leq k\leq s$, from \eqref{eq:etak} we deduce that
\[
(\Phi_\kappa|_{Y_k})^{-1}(\Gamma_{e,\kappa})=Y_k
\]
as sets. Thus, $\Phi_\kappa(W_0)\subseteq\Gamma_{e,\kappa}$ as sets. Since $W_0$ is reduced, we conclude that there is a morphism $\varphi:W_0\to\Gamma_{e,\kappa}$ such that
\[
\Phi_\kappa|_{W_0}=\iota_{e,\kappa}\circ\varphi:W_0\to\sX_\kappa\times_\kappa\sX_\kappa,
\]
where $\iota_{e,\kappa}:\Gamma_{e,\kappa}\to\sX_\kappa\times_\kappa\sX_\kappa$ is the inclusion. Since $\Phi_\kappa|_{W_0}$ is finite and $\iota_{e,\kappa}$ is a closed immersion, the morphism $\varphi:W_0\to\Gamma_{e,\kappa}$ is finite. Both projections $\Gamma_{e,\kappa}\to\sX_\kappa$ are finite, so the two projections $W_0\to\sX_\kappa$ are finite. Let $1\leq i\leq2$. Then every fiber of $\Pi_{i,\kappa}:\sW_\kappa\to\sX_\kappa$ is zero-dimensional. Upper semicontinuity gives a closed subset
\[
Z_i=\{x\in\sX_R\colon \dim\Pi_i^{-1}(x)\geq1\}
\]
of $\sX_R$ such that $Z_i\cap\sX_\kappa=\emptyset$. Since $\sX_R$ is proper over $R$, we conclude that $Z_i=\emptyset$. So $\Pi_i$ is quasi-finite. Since $\Pi_i$ is proper, it is finite. Therefore, its generic fiber $\pi_i:W\to X$ is finite. Then $\pi_i$ is surjective because it is dominant. Clearly, $p_i$ is also finite and surjective.

\smallskip

Let $\rho_1,\rho_2:\Gamma_{e,\kappa}\to\sX_\kappa$ be the two projections. Then
\[
\rho_2=F_{e,\kappa}\circ\rho_1\quad\text{and}\quad\Pi_{i,\kappa}|_{W_0}=\rho_i\circ\varphi\quad(1\leq i\leq 2).
\]
Since $F_{e,\kappa}^*\sA_\kappa\simeq\sA_\kappa^{\otimes e}$, the line bundle
\[
\sN:=\Pi_2^*\sA_R\otimes(\Pi_1^*\sA_R)^{\otimes(-e)}
\]
satisfies
\[
\sN|_{W_0}\simeq\varphi^*\rho_1^*\left(F_{e,\kappa}^*\sA_\kappa\otimes\sA_\kappa^{\otimes(-e)}\right)\simeq\sO_{W_0}.
\]
Let $D\subseteq W_{L_v}$ be an integral curve, and $\sD\subseteq\sW$ its scheme-theoretic closure. Then $\sD$ is projective and flat over $R$. Write the fundamental one-cycle associated with $\sD_\kappa$ as
\[
[\sD_\kappa]_1=\sum_{l=1}^t b_l[D_l],\quad b_l=\operatorname{length}\sO_{\sD_\kappa,\varrho_l}>0,
\]
where $D_l$ ($1\leq l\leq t$) are the irreducible components of $(\sD_\kappa)_{\red}$ with generic point $\varrho_l$, respectively. Since each $D_l\subseteq W_0$, by the specialization of degree \cite[\S~20.3]{Fulton1998} we obtain
\[
\deg(\sN|_D)=\sum_{l=1}^t b_l\deg(\sN|_{D_l})=\sum_{l=1}^t b_l\cdot0=0.
\]
Every integral curve in $W_{\olQ}$ is defined over a finite extension $F/L$; after taking $w\in M_F$ over $v$, the same argument for the base-changed model over $\sO_{F_w}$ shows that $\sN_{\sO_{F_w}}$ has degree zero on that curve. By the definition of $\sN$, we deduce that $\pi_2^*A_{\olQ}\equiv e\pi_1^*A_{\olQ}$.
\end{proof}

\begin{lemma}\label{lem:inv}
Let $X$ be a normal geometrically integral projective variety defined over a number field $L$, and let $H$ be an ample line bundle on $X$. If $\pi_1,\pi_2:W\to X$ are two finite surjective morphisms from a normal geometrically integral projective variety $W$ such that $\pi_2^*H\equiv q\pi_1^*H$ for some real number $q>1$, then there is no $U\in L(X)\setminus L$ satisfying $\pi_1^*(U)=\pi_2^*(U)$.
\end{lemma}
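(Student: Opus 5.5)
We argue by contradiction: suppose $U\in L(X)\setminus L$ satisfies $\pi_1^*(U)=\pi_2^*(U)$. The plan is to produce a curve $C\subseteq W$ on which the two pullbacks of $H$ would have to have equal degree. This is incompatible with $\pi_2^*H\equiv q\,\pi_1^*H$ for $q>1$.

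First, since $U$ is nonconstant, the rational map $U:X\dashrightarrow\P^1$ is dominant. Let $\Gamma\subseteq X\times\P^1$ be the closure of its graph, with projection $\beta:\Gamma\to X$ birational. We work with $U$ directly on $W$. Put $V:=\pi_1^*U=\pi_2^*U\in L(W)$; it is nonconstant because $\pi_1$ is dominant. The identity $\pi_1^*U=\pi_2^*U$ says that for a general point $t\in\P^1(\olQ)$, $\pi_1$ and $\pi_2$ both map the fiber $V^{-1}(t)$ into $U^{-1}(t)$, up to the indeterminacy loci. More precisely, take the closures $D_t\subseteq W$ and $E_t\subseteq X$ of the fibers over $t$. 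Then $\pi_i(D_t)\subseteq E_t$, and by finiteness and surjectivity of $\pi_i$ we get equality of supports.

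The key step is an intersection-number comparison. Let $n=\dim X$, and assume $n\geq 2$; if $n=1$, argue directly with the degree of the divisor of $U$. Cut $E_t$ with $n-2$ general members of a very ample multiple $mH$ to get an irreducible curve $\Sigma\subseteq X$ with $U|_\Sigma$ nonconstant, or simply work with $E_t$. Rather than track components, use a cleaner invariant. Let $P=\operatorname{div}_\infty(U)$, the polar divisor, and let $h_U(\cdot)$ be the "$U$-degree" of a curve $C\subseteq W$, that is, $\deg(V|_C)=(C\cdot\pi_1^*P)$ computed as the degree of $V:C\to\P^1$. Because $V=\pi_1^*U=\pi_2^*U$, we get
\[
(\pi_{1*}C\cdot P)=\deg(V|_C)=(\pi_{2*}C\cdot P)
\]
for every curve $C$ not contained in the relevant base loci. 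On the other hand, the numerical hypothesis gives $(\pi_{2*}C\cdot H)=q(\pi_{1*}C\cdot H)$. Iterating via correspondences is unavailable, so I would instead exploit asymmetry along a family. Choose $C$ as a complete intersection in $W$ of general members of $|k\pi_1^*H|$ (with $k\gg0$); all such $C$ are numerically equivalent.

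The decisive and hardest point is turning equality of $P$-degrees into a contradiction with the $H$-degree ratio $q$, since $P$ and $H$ are unrelated. Here is what I would try. Consider the positive-dimensional family of curves $C_t:=\pi_1^{-1}(\Sigma)$, where $\Sigma$ ranges over complete-intersection curves in $X$ from $|kH|$ lying in a fixed fiber direction of $U$; that is, arrange $U|_\Sigma$ constant by taking $\Sigma\subseteq E_t$ when $n\geq2$. Then $\pi_{1*}C$ lies in $E_t$, and so does $\pi_{2*}C$, since $V$ is constant on $C$. Applying this inside the fibers gives $(\pi_2^*H\cdot C)=q(\pi_1^*H\cdot C)$. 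Iterating the construction (replace $\Sigma$ by $\pi_2(C)$, which stays in $E_t$) produces curves in the fixed projective variety $E_t$ whose $H$-degrees grow like $q^k$, while their degree relative to a Chow-bounded family should be bounded. I would try to get the bound by noting that the components of $\pi_2(\pi_1^{-1}(\cdot))$ form a finite correspondence on $E_t$ that preserves $H$-degree up to the bounded factor $\deg\pi_1$. This yields $q\leq\deg\pi_1$ rather than a contradiction, so the real obstacle is extracting strict growth. The fallback is to use the $n=1$ reduction: restrict to a general curve and compare the degrees of $V$ computed via $\pi_1$ and via $\pi_2$, namely $\deg\pi_1\cdot\deg U=\deg\pi_2\cdot\deg U$. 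This forces $\deg\pi_1=\deg\pi_2$. Meanwhile the top self-intersection gives $\deg\pi_2\,(H^n)=q^n\deg\pi_1\,(H^n)$, so $q^n=1$, contradicting $q>1$. I expect this degree-of-$V$ identity to be the cleanest route: $\deg(V:W\dashrightarrow\P^1)$ is not a finite degree when $n>1$, so I would phrase it using $\deg\pi_i$ together with the generic fibers of $U$ and $V$.
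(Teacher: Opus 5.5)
\textbf{There is a genuine gap when $n\geq 2$.} As you note, your iteration argument only yields $q\leq\deg\pi_1$. The fallback you settle on asserts $\deg\pi_1=\deg\pi_2$ by ``comparing the degrees of $V$ computed via $\pi_1$ and via $\pi_2$.'' That comparison only makes sense for $n=1$.

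The hypothesis $\pi_1^*U=\pi_2^*U$ alone does not force equal degrees in higher dimension. For example, take $X=W=\P^1\times\P^1$, $U=x$, $\pi_1=\id$ and $\pi_2(x,y)=(x,y^2)$. Then $\pi_1^*U=\pi_2^*U$, but $\deg\pi_1=1\neq 2=\deg\pi_2$.

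Restricting to a general curve does not rescue this either. The maps $\pi_1$ and $\pi_2$ send a curve $C\subseteq W$ to different curves of $X$, so there is no common one-dimensional situation to reduce to. The numerical hypothesis has to be used a second time, and the correct relation is $\deg\pi_2=q^{n-1}\deg\pi_1$, not $\deg\pi_2=\deg\pi_1$.

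\textbf{The missing step.} Intersect the \emph{divisor} on which the two pullbacks agree with $n-1$ copies of $H$, instead of working with curves and push-forwards. Let $P$ be the polar divisor of $U$; it is nonzero because $X$ is normal and projective and $U$ is nonconstant. A finite morphism sends prime divisors to prime divisors, so the polar divisor of $\pi_i^*U$ is $\pi_i^*P$, and hence $\pi_1^*P=\pi_2^*P$.

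Write $e_i=\deg\pi_i$ and $a=(P\cdot H^{n-1})>0$. The projection formula together with $\pi_2^*H\equiv q\pi_1^*H$ gives
\[
e_2a=(\pi_2^*P\cdot(\pi_2^*H)^{n-1})=q^{n-1}(\pi_1^*P\cdot(\pi_1^*H)^{n-1})=q^{n-1}e_1a,
\]
so $e_2=q^{n-1}e_1$. The top self-intersection gives $e_2=q^ne_1$. Since $q>1$ and $e_1>0$, this is a contradiction, and it is exactly the paper's argument.

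Your fiber idea would also work if carried out at the divisor level. The zero divisor $(U-t)_0$ is linearly equivalent to $P$ and satisfies $\pi_1^*(U-t)_0=\pi_2^*(U-t)_0$. Intersect it with $(\pi_i^*H)^{n-1}$ rather than cutting down to curves and iterating. Your treatment of $n=1$ is correct; it is the case $q^{0}=1$ of this computation.
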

\begin{proof}
Suppose that $U\in L(X)\setminus L$ satisfies $\pi_1^*(U)=\pi_2^*(U)$. Set $n=\dim X$. Let $D$ be the pole divisor of $U$, regarded as an effective Weil divisor. Since $X$ is normal and projective and $U$ is nonconstant, $D\neq0$. From $\pi_1^*(U)=\pi_2^*(U)$, we obtain $\pi_1^*D=\pi_2^*D$. Set $e_i=\deg\pi_i$ ($1\leq i\leq 2$) and $a=(D\cdot H^{n-1})>0$. By the projection formula,
\[
e_2a=(\pi_2^*D\cdot(\pi_2^*H)^{n-1})=q^{n-1}(\pi_1^*D\cdot(\pi_1^*H)^{n-1})=q^{n-1}e_1a.
\]
So $e_2=q^{n-1}e_1$. On the other hand, top intersections show $e_2=q^ne_1>q^{n-1}e_1$, a contradiction.
\end{proof}

\subsection{Local estimates}\label{sec:local}
We establish the necessary local estimates in the wild and tame cases.
\begin{lemma}\label{lem:in}
Let $E/\Q_p$ be a finite abelian extension, where $p\geq3$ is a rational prime.

\textup{(1)} If $E/\Q_p$ is wildly ramified, then the inertia group $I(E/\Q_p)$ contains a subgroup $G$ of order $p$ such that for every $\tau\in G$ and every $x\in\sO_E$, one has
\begin{equation}\label{eq:in}
|\tau(x)-x|_p\leq p^{-1/(p-1)}.
\end{equation}

\textup{(2)} If $E/\Q_p$ is tamely ramified, then the ramification index of $E/\Q_p$ divides $p-1$.
\end{lemma}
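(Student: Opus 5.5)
Both parts will come from local class field theory and the structure of $\Q_p^\times$. Since $E/\Q_p$ is finite abelian, the Artin map gives $\Gal(E/\Q_p)\simeq\Q_p^\times/N_{E/\Q_p}E^\times$. Under this isomorphism the inertia group $I(E/\Q_p)$ is the image of $\Z_p^\times$, and the higher ramification groups in the upper numbering are the images of the unit groups $U^{(n)}=1+p^n\Z_p$. By Hasse--Arf these jumps are integers. Since $p\geq3$, we have $\Z_p^\times\simeq\mu_{p-1}\times(1+p\Z_p)$ and $1+p\Z_p\simeq\Z_p$.

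\textbf{Part (2).} Tameness means the wild inertia group $G^{>0}$ is trivial, i.e.\ the image of $U^{(1)}=1+p\Z_p$ is trivial. Hence $I(E/\Q_p)$ is a quotient of $\Z_p^\times/(1+p\Z_p)\simeq\F_p^\times$, so $e(E/\Q_p)=\#I(E/\Q_p)$ divides $p-1$. This is a direct consequence of the description above.

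\textbf{Part (1).} Wild ramification means the image of $1+p\Z_p$ in $\Gal(E/\Q_p)$ is a nontrivial finite quotient of $\Z_p$. So it is cyclic of order $p^k$ with $k\geq1$, namely the image of $U^{(1)}/U^{(k+1)}$. Let $G$ be its unique subgroup of order $p$, i.e.\ the image of $U^{(k)}$. Then $G=G^{k}$ in upper numbering and $G^{k+\epsilon}=1$.

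\textbf{Estimate \eqref{eq:in}.} We must show $v_E(\tau(\pi_E)-\pi_E)\geq e_E/(p-1)$ for $\tau\in G$. It suffices to test on a uniformizer $\pi_E$ of the totally ramified part, since $G$ lies in inertia and $\tau$ acts trivially on the unramified part and on its residue field. We convert to lower numbering: $G=G_{i}$ for some lower index $i$, and $i_{E/\Q_p}(\tau)=i+1$, where $i_{E/\Q_p}(\tau)=v_E(\tau\pi_E-\pi_E)$. We want $(i+1)/e\geq 1/(p-1)$, with $e=e(E/\Q_p)$.

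\textbf{Main obstacle: the Herbrand computation.} Write $e=e_0p^{k}$ with $e_0\mid p-1$, where $e_0$ is the tame index and $p^k$ is the wild part. The filtration has $G_0/G_1$ of order $e_0$, and upper jumps at $1,2,\dots,k$, each with index-$p$ steps. The Herbrand function $\psi$ gives the lower jump of $G$ as
\[
i=e_0\bigl(1+p+\dots+p^{k-1}\bigr)\cdot(\text{suitable normalization}),
\]
so roughly $i\approx e_0(p^k-1)/(p-1)$. Then $(i+1)/e\geq \bigl(e_0(p^k-1)/(p-1)\bigr)/(e_0p^k)$, and I will need to verify that this actually reaches $1/(p-1)$; the bookkeeping of the $+1$ term is delicate.

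\textbf{Fallback via cyclotomics.} If that bookkeeping is awkward, I will instead use Kronecker--Weber locally. We have $E\subseteq\Q_p(\zeta_{p^{k+1}})\cdot U$ with $U$ unramified. The element generating $G$ corresponds to $\zeta\mapsto\zeta^{1+p^k u}$. Then $\tau(\zeta)-\zeta=\zeta(\zeta^{p^ku}-1)$, which is divisible by $\zeta_p-1$, of absolute value $p^{-1/(p-1)}$. The ring of integers of the compositum is generated over $\sO_U$ by $\zeta$, and $\tau$ is trivial on $U$. Hence $|\tau(x)-x|\leq|\zeta_p-1|=p^{-1/(p-1)}$ for all integral $x$. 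This restricts to $E$ because $G$ is the image of the corresponding subgroup. I expect this cyclotomic route to be the cleaner final argument.
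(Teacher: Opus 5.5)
Your proposal is correct. The cyclotomic argument you settle on for (1) is essentially the paper's proof: embed $E$ in $F(\zeta_{p^m})$ with $F/\Q_p$ unramified, use $\sO_{F(\zeta_{p^m})}=\sO_F[\zeta_{p^m}]$, and take the order-$p$ top layer $\zeta_{p^m}\mapsto\zeta_p^j\zeta_{p^m}$.

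The only difference in (1) is how $G$ and the level are pinned down. The paper takes $m$ minimal and gets injectivity of restriction to $E$ from minimality. You instead obtain level $k+1$ and identify $G$ as the image of $U^{(k)}$ via class field theory. You should say explicitly that $U^{(k+1)}\subseteq N_{E/\Q_p}E^\times$ gives $E\subseteq\Q_p(\zeta_{p^{k+1}})U$. The estimate depends on the level being exactly $k+1$, so that $\zeta^{p^k u}$ is a $p$-th root of unity.

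For (2), the paper simply cites Newton. Your Artin-map argument, showing tame inertia is a quotient of $\Z_p^\times/(1+p\Z_p)\simeq\F_p^\times$, is a valid self-contained substitute.

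Your abandoned Herbrand route also closes once the $+1$ is kept. The upper jumps $1,\dots,k$ give lower jumps $u_j=e_0(p^j-1)/(p-1)$, so $i(\tau)=u_k+1$ for $1\neq\tau\in G$. Then $(u_k+1)/e\geq 1/(p-1)$ is equivalent to $e_0\leq p-1$, which holds because $e_0\mid p-1$. Your displayed lower bound dropped the $+1$ and so fell just short. This route needs Hasse--Arf and the upper-numbering form of local class field theory, whereas the cyclotomic route needs only local Kronecker--Weber and the explicit ring of integers.
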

\begin{proof}
\textup{(1)} Suppose that $E/\Q_p$ is wildly ramified. For $n\in\Z_{>0}$, let $\zeta_n$ be a primitive $n$-th root of unity. We may choose $\zeta_{p^j}^p=\zeta_{p^{j-1}}$ for all $j\geq1$. By local Kronecker--Weber \cite[Chapter~\Rom{5}, Corollary~1.9]{Neukirch1999}, it is easy to see that there exist a finite unramified extension $F/\Q_p$ and an integer $m\geq0$ such that $E\subseteq F_m:=F(\zeta_{p^m})$. Fix such a finite unramified extension $F/\Q_p$. Let $m\geq0$ be the minimal integer such that $E\subseteq F_m$. Since $E/\Q_p$ is wildly ramified and $F/\Q_p$ is unramified, we see that $p$ divides the ramification index $e(F_m/F)$, which implies $m\geq2$. Note that
\[
H:=\Gal(F_m/F_{m-1})=\{\sigma_j\colon0\leq j<p\}\cong\Z/p\Z,
\]
where $\sigma_j|_F=\id_F$ and $\sigma_j(\zeta_{p^m})=\zeta_p^j\zeta_{p^m}$. Since $E/\Q_p$ is Galois, by restriction we obtain a map
\[
\iota:H\to\Gal(E/\Q_p).
\]
If $\iota(H)=1$, then $E\subseteq F_m^H=F_{m-1}$, contradicting the minimality of $m$. Thus, we conclude that $\iota$ is injective. Then the subgroup $G:=\iota(H)$ of $\Gal(E/\Q_p)$ has order $p$. Since $F_m/F$ is totally ramified and every $\sigma_j$ fixes $F$, we deduce that $G\subseteq I(E/\Q_p)$.

Since $F/\Q_p$ is unramified, $p\sO_F$ is the maximal ideal of $\sO_F$. Observe that the polynomial $\Phi_{p^m}(T+1)$ is Eisenstein at $p\sO_F$, where $\Phi_{p^m}$ is the $p^m$-th cyclotomic polynomial. By \cite[\S~I.6, Proposition~17 and its Corollary]{Serre1979}, we conclude that
\[
\sO_{F_m}=\sO_F[\zeta_{p^m}-1]=\sO_F[\zeta_{p^m}].
\]
For every $\sigma\in H$, we have
\[
(\sigma-1)\sO_{F_m}\subseteq(\zeta_p-1)\sO_{F_m}.
\]
Since $|\zeta_p-1|_p=p^{-1/(p-1)}$ and $\sO_E\subseteq\sO_{F_m}$, we deduce \eqref{eq:in} for every $\tau\in G$.

\smallskip

\textup{(2)} The conclusion follows from \cite[Proposition~2.2.9]{Newton2012}.
\end{proof}

\section{Proof of the main theorem}\label{sec:proof}
In this section, we complete the proof of the main theorem.
\subsection{Orbit closures and fields of definition}
The following lemma formulates the orbit-closure argument needed.
\begin{lemma}\label{lem:orb}
Let $M$ be an integral variety over an algebraically closed field $k$. Let $p:X\to M$ be a morphism of $k$-varieties with a section $s:M\to X$, and $F:X\to X$ a finite morphism with $p\circ F=p$. For $n\geq0$, define $\alpha_n=F^{\circ n}\circ s$ and $C_n=\alpha_n(M)$; let
\[
Z_n=\overline{\bigcup_{m\geq n}C_m}
\]
have the reduced structure. Then there is $N_0\geq0$ such that $Z_N=Z_{N_0}$ for every $N\geq N_0$. Set $Z=Z_{N_0}$. For every irreducible component $T$ of $Z$, one has
\[
p(T)=M\quad\text{and}\quad T=\overline{\bigcup_{\substack{n\geq N_0\\C_n\subseteq T}}C_n}.
\]
Moreover, either $\alpha_a=\alpha_b$ for some integers $b>a\geq0$, or there are an irreducible component $Y_0$ of $Z$ and an integer $l\geq1$ satisfying
\[
p(Y_0)=M,\quad\dim Y_0>\dim M,\quad\text{and}\quad F^{\circ l}(Y_0)=Y_0.
\]
\end{lemma}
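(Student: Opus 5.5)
The plan is to use Noetherianity for stabilization, then the finiteness of $F$ to control how $F$ permutes the components of $Z$.

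\emph{Stabilization.} The sets $Z_n$ form a decreasing chain of closed subsets of $X$, so Noetherianity gives $N_0$ with $Z_N=Z_{N_0}=:Z$ for all $N\geq N_0$.

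\emph{Properties of each component $T$.} Each $C_n=\alpha_n(M)$ is irreducible, since $M$ is. It is closed because $s$ is a section, hence a closed immersion ($X$ is separated), and $F^{\circ n}$ is finite. Moreover $p(C_n)=p\circ F^{\circ n}\circ s(M)=M$, since $p\circ F=p$ and $p\circ s=\id$. Now write $Z=\overline{\bigcup_{m\geq N_0}C_m}$. Each irreducible $C_m$ lies in some component of $Z$, so $Z=\bigcup_{T}\overline{\bigcup_{C_m\subseteq T}C_m}$, a finite union over the components $T$. Each piece is contained in $T$, so irreducibility of $T$ forces $T$ to equal its piece. In particular $T$ contains some $C_m$, giving $p(T)\supseteq M$.

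\emph{Action of $F$ on components.} Since $F$ is finite, hence closed, $F(Z_n)=\overline{\bigcup_{m\geq n+1}C_m}=Z_{n+1}$, using that the image of a closure under a closed continuous map is the closure of the image. Thus $F(Z)=Z$. Because $F$ is closed, $F$ maps each component $T$ onto an irreducible closed subset of $Z$. Since $Z$ is the union of the $F(T)$ and there are finitely many components, a counting argument shows that $F$ permutes the components. More precisely, every component is hit, so the map $T\mapsto$ the component containing $F(T)$ is surjective, hence bijective, and $F(T)$ equals that component. Taking $l$ to be the order of this permutation gives $F^{\circ l}(T)=T$ for every component $T$.

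\emph{Dichotomy.} If some component $T$ has $\dim T>\dim M$, set $Y_0=T$ and we are done. Otherwise every component has dimension $\dim M=\dim C_m$, so each component $T$ contains some $C_m$ of the same dimension, and $T=C_m$. Then $Z$ is a finite union of the sets $C_m$. Since $Z=Z_N$ for all large $N$, and the infinitely many $C_m$ with $m\geq N_0$ all lie among finitely many components, two indices $b>a$ must satisfy $C_a=C_b$. I expect the main obstacle to be upgrading $C_a=C_b$ to the equality of maps $\alpha_a=\alpha_b$. For this, note that $p|_{C_a}$ restricted to $C_a$ is an inverse of both $\alpha_a$ and $\alpha_b$ as maps $M\to C_a$, since $p\circ\alpha_n=\id_M$. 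Each $\alpha_n$ is a closed immersion onto $C_n$, being a section followed by a finite map with left inverse $p$. Hence $\alpha_a=(p|_{C_a})^{-1}=\alpha_b$ as morphisms onto the reduced subvariety $C_a$; reducedness holds because $M$ is integral and $C_a$ carries the reduced structure.
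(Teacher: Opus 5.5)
Your proof is correct and follows the paper's argument: Noetherian stabilization, $F(Z)=Z$ from closedness of $F$, each component equal to the closure of the $C_n$ it contains, and the dimension dichotomy with pigeonhole and $\alpha_a=(p|_{C_a})^{-1}=\alpha_b$. The only real variation is the periodicity step: you show $F$ permutes all irreducible components of $Z$ using only that $F$ is closed and $F(Z)=Z$, while the paper uses that finite maps preserve dimension, so $F$ sends a top-dimensional component to a component, and then takes a periodic one in its forward orbit; both arguments are valid.
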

\begin{proof}
For $n\geq0$, since $p\circ\alpha_n=\id_M$, every $\alpha_n$ is a section of the separated morphism $p$, hence a closed immersion \cite[Tag~01KT]{Stacks}. Thus, $C_n$ is a closed subvariety of $X$ isomorphic to $M$. The descending chain of closed subsets
\[
Z_0\supseteq Z_1\supseteq Z_2\supseteq\cdots
\]
stabilizes, so there is $N_0\geq0$ such that $Z_N=Z_{N_0}$ for every $N\geq N_0$. Set $Z=Z_{N_0}$. Since $F$ is finite, hence closed, we see that $F(Z)=Z$.

Let $\Irr(Z)$ be the set of irreducible components of $Z$ and $T\in\Irr(Z)$. Define
\[
T^\circ=T\setminus\bigcup_{\substack{T'\in\Irr(Z)\\T'\neq T}}T'=Z\setminus\bigcup_{\substack{T'\in\Irr(Z)\\T'\neq T}}T',
\]
which is nonempty and Zariski open in $Z$. Take $n_0\geq N_0$ such that $C_{n_0}\cap T^\circ\neq\emptyset$. Since $C_{n_0}$ is irreducible and $C_{n_0}\subseteq Z$, we obtain $C_{n_0}\subseteq T$. Thus,
\[
M=p(C_{n_0})\subseteq p(T)\subseteq M,
\]
so $p(T)=M$. Applying the same argument to every nonempty Zariski open subset of $T^\circ$ implies that
\[
T=\overline{\bigcup_{\substack{n\geq N_0\\C_n\subseteq T}}C_n}.
\]

Suppose that $\alpha_a\neq\alpha_b$ for all integers $b>a\geq0$. Since every $C_n$ is irreducible of dimension $\dim M$ with $p(C_n)=M$, we deduce that $p(T)=M$ and $\dim T\geq\dim M$ for all $T\in\Irr(Z)$. If every $T\in\Irr(Z)$ has dimension $\dim M$, then for each $n\geq N_0$, we see that $C_n\in\Irr(Z)$. Thus, for some integers $b>a\geq N_0$, we have $C_a=C_b=:C$. Since $p\circ\alpha_a=p\circ\alpha_b=\id_M$, both
\[
\alpha_a:M\to C\quad\text{and}\quad\alpha_b:M\to C
\]
are inverse to the isomorphism $p|_C:C\to M$. So $\alpha_a=\alpha_b$, a contradiction. Consequently,
\[
r:=\max_{T\in\Irr(Z)}\dim T>\dim M.
\]
Fix $T_0\in\Irr(Z)$ with $\dim T_0=r$. Since $F$ is finite and $F(Z)=Z$, maximality of $r$ shows
\[
\{F^{\circ n}(T_0)\colon n\geq0\}\subseteq\Irr(Z).
\]
Thus, there are integers $m\geq0$ and $l\geq1$ such that $Y_0:=F^{\circ m}(T_0)\in\Irr(Z)$ and $F^{\circ l}(Y_0)=Y_0$.
\end{proof}

In the following proposition, we apply Lemma~\ref{lem:orb} to the setting of the main theorem and make several reductions. A key consequence is the identity $L(x)=L(\varphi(x))$ in \eqref{eq:q}.
\begin{proposition}\label{prop:orb}
Let $d\geq2$ be an integer, and let $K$ be a number field. Suppose that there is a sequence $(x_j)_{j=1}^\infty\subseteq\MPoly^d(\Kab)$ of pairwise distinct points with
\[
\lim_{j\to\infty}h_{\mathrm{crit}}(x_j)=0.
\]
Then there are
\begin{itemize}
\item a finite extension $L/K$,
\item a positive-dimensional geometrically integral subvariety $V\subseteq\MPoly^d_L$ and a generic sequence $(a_j)_{j=1}^\infty\subseteq V(\Lab)$ in $V/\olQ$ of pairwise distinct points, with $\lim_{j\to\infty}h_{\mathrm{crit}}(a_j)=0$,
\item a normal geometrically integral variety $Y/L$, a surjective morphism $\varphi:Y\to V$, and a finite endomorphism $G:Y\to Y$,
\item a normal projective compactification $Y\subseteq\overline{Y}$ and a nef adelic line bundle $\overline{A}_Y$ on $\overline{Y}$, with $A_Y$ ample and the metric of $\overline{A}_Y$ induced by a relatively ample model at all but finitely many places of $L$,
\end{itemize}
satisfying the following two properties.

\textup{(i)} $\varphi\circ G=\varphi$, $s:=\dim Y-\dim V>0$, and $\deg G=d^{sl}$ for some $l\in\Z_{>0}$.

\textup{(ii)} For every $\Gal(\Lab/L)$-invariant subset $P\subseteq V(\Lab)$ satisfying
\begin{equation}\label{eq:pdense}
\overline{\{a\in P\colon h_{\mathrm{crit}}(a)<\epsilon\}}=V_{\olQ},\quad\forall\epsilon>0,
\end{equation}
there is a $\Gal(\Lab/L)$-invariant subset $Q_P\subseteq Y(\Lab)\cap\varphi^{-1}(P)$ that is Zariski dense in $Y_{\olQ}$ such that
\begin{equation}\label{eq:qdense}
\overline{\{x\in Q_P\colon h_{\overline{A}_Y}(x)<\epsilon\}}=Y_{\olQ},\quad\forall\epsilon>0,
\end{equation}
and 
\begin{equation}\label{eq:q}
G(Q_P)\subseteq Q_P,\quad L(x)=L(\varphi(x)),\quad h_{\overline{A}_Y}(G(x))=d^l h_{\overline{A}_Y}(x)\quad(x\in Q_P).
\end{equation}
\end{proposition}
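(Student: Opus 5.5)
We reduce to a generic sequence on a subvariety, run the orbit-closure lemma on the restricted family, and then pull back the adelic line bundle of Theorem~\ref{thm:met}.

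\textbf{Step 1: reduction to a generic sequence.} Let $V_0$ be the Zariski closure of $\{x_j\}$ in $\MPoly^d_{\olQ}$. Since the $x_j$ are pairwise distinct, $V_0$ is positive-dimensional. Choose an irreducible component containing infinitely many $x_j$, and pass to a subsequence. By Noetherian induction, replacing the component by a smaller closed subvariety whenever a proper closed subset contains infinitely many terms, we obtain an irreducible positive-dimensional $V_{\olQ}$ on which the subsequence is generic. Let $L/K$ be a finite extension over which $V$ is defined and geometrically integral. Enlarge $L$ so that it also contains the field of Theorem~\ref{thm:met}. Since $\Kab\subseteq\Lab$, the terms lie in $V(\Lab)$.

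\textbf{Step 2: orbit closure and construction of $(Y,\varphi,G)$.} Apply Lemma~\ref{lem:orb} over $\olQ$ to $X_V=(X_d\times_{\MPoly^d}V)_{\mathrm{red}}$, with $p=p_d$, $s=s_d$ and $F=F_d$. In the first alternative, $F_d^{\circ a}\circ s_d=F_d^{\circ b}\circ s_d$ on $V$, so by \eqref{eq:pcf} every point of $V(\C)$ would be PCF. This contradicts Thurston rigidity, since there are only countably many PCF classes and $\dim V>0$. So we get $Y_0$ and $l$ as in the second alternative. After enlarging $L$, we may take $Y_0$ defined and geometrically integral over $L$; this is possible because the orbit closure is Galois-stable and we can pass to the field of definition of its components.

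Let $Y$ be the normalization of $Y_0$, let $G$ be the lift of $F_d^{\circ l}$, and set $\varphi=p_d\circ\nu$. Proposition~\ref{prop:deg} gives finiteness of $G$, the identity $\varphi\circ G=\varphi$, and $\deg G=d^{sl}$, which is (i).

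For $\overline{Y}$, take the normalization of the closure of $Y_0$ in $\overline{X}_d$, and set $\overline{A}_Y$ to be the pullback of $\overline{A}$. Then $A_Y$ is ample because the map is finite, $\overline{A}_Y$ is nef, and it has model metrics at almost all places by normalizing the model $\sX_d$.

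\textbf{Step 3: construction of $Q_P$ (main obstacle).} Since the points $C_n\cap Y_0$ with $n\geq N_0$ are dense in $Y_0$, the natural candidate is
\[
Q_P=\bigcup_{n\geq N_0,\ C_n\subseteq Y_0}\{\text{lifts of }F_d^{\circ n}(s_d(a))\colon a\in P\}.
\]
Then $\varphi(x)=a$, and by Theorem~\ref{thm:met} we have $h_{\overline{A}_Y}(x)=k_d d^n h_{\mathrm{crit}}^{\max}(a)$. Using $h_{\mathrm{crit}}^{\max}\le h_{\mathrm{crit}}$ from \eqref{eq:cmp}, small $h_{\mathrm{crit}}$ gives small height for any fixed $n$.

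Density \eqref{eq:qdense} follows because each $C_n\subseteq Y_0$ is a section over $V$ and $P_\epsilon$ is dense in $V$. Hence the images are dense in $C_n$, and the union of the $C_n$ is dense in $Y_0$.

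Invariance is checked as follows. Since $F^{\circ l}(C_n)=C_{n+l}$, and $C_{n+l}\subseteq Y_0$ by periodicity, $G$ preserves $Q_P$. The height relation $h_{\overline{A}_Y}(G(x))=d^l h_{\overline{A}_Y}(x)$ comes from \eqref{eq:orbitheight}.

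The delicate point is to ensure $L(x)=L(\varphi(x))$ on the normalization. The point $F_d^{\circ n}(s_d(a))$ is defined over $L(a)$, because $s_d$ and $F_d$ are defined over $\Q$. However, the normalization $\nu$ may fail to be injective over the nonnormal locus, and $C_n$ could lie in that locus. To avoid this, I would restrict $Q_P$ to those $n$ with $C_n\not\subseteq$ the nonnormal locus of $Y_0$. There are still infinitely many such $n$: the dense union of the $C_n$ cannot sit inside a proper closed subset, and $G$-stability is preserved up to discarding finitely many $n$. Over the normal locus, $\nu$ is an isomorphism, so the lift $x$ is unique and defined over $L(a)$. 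Galois invariance of $Q_P$ then follows from that of $P$ and the $L$-rationality of all the maps.

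If the nonnormal locus is not $G$-backward-stable, I would instead take the union over all $n$ for which the lift is unique, and verify closure under $G$ using that $\nu^{-1}$ of a normal point maps to a normal point under $G$.
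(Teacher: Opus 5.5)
Your Steps 1 and 2 follow the paper's argument. Step 3, however, has a genuine gap: the set $Q_P$ you construct does not satisfy $G(Q_P)\subseteq Q_P$ and $L(x)=L(\varphi(x))$ at the same time.

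First, restricting to those $n$ with $C_n\not\subseteq N$, where $N$ is the non-normal locus of $Y_0$, does not make lifts unique. For a given $a\in P$, the single point $\alpha_n(a)$ may still lie in $N$. So you must restrict to pairs $(n,a)$ with $\alpha_n(a)\in U$, where $U\subseteq Y_0$ is a dense open set over which $\nu$ is an isomorphism.

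Second, the set of such unique lifts is not $G$-stable. Your fallback claim, that $G$ sends points over the normal locus to points over the normal locus, is false in general. Take the nodal curve obtained from $\A^1$ by gluing $t=1$ and $t=-1$. The map $t\mapsto t^2$ descends to a finite endomorphism of it, and this endomorphism sends the normal point $t=i$ to the node. Likewise, the assertion that $G$-stability survives after discarding finitely many $n$ is unsupported: nothing prevents $C_n\not\subseteq N$ while $C_{n+l}\subseteq N$.

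The missing observation is that you never need unique lifts along $G$-orbits. Since $G$ is defined over $L$, fields of definition can only shrink along forward orbits. Since $\varphi\circ G=\varphi$, they are bounded below by $L(\varphi(x))$.

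The paper therefore sets
\[
Q_P^0=\{\nu_U^{-1}(\alpha_n(a))\colon n\in I,\ a\in P,\ \alpha_n(a)\in U\}\quad\text{and}\quad Q_P=\bigcup_{r\geq0}G^{\circ r}(Q_P^0).
\]
For $x=G^{\circ r}(x_0)$ with $x_0=\nu_U^{-1}(\alpha_n(a))$, one gets $L(a)=L(\varphi(x))\subseteq L(x)\subseteq L(x_0)\subseteq L(a)$.

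Density \eqref{eq:qdense} then comes from $Q_P^0$ alone. Given $\epsilon>0$ and a nonempty open $O\subseteq Y_{\olQ}$, choose $n\in I$ with $\alpha_n^{-1}(\nu_U(O\cap\nu^{-1}(U)))\neq\emptyset$. Then pick $a\in P$ in this open set with $h_{\mathrm{crit}}(a)<\epsilon/(k_dd^n)$. Both choices are possible because the $C_n$ with $n\in I$ are dense in $Y_0$, and because of \eqref{eq:pdense}.

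A minimal repair of your own version also works. Let $Q_P$ consist of all lifts $x$ of points $\alpha_n(a)$ ($n\in I$, $a\in P$) satisfying $L(x)=L(a)$. This set is $G$-stable and Galois-invariant by the same field-of-definition observation.
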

\begin{proof}
Take an irreducible subvariety $V_{\olQ}\subseteq\MPoly^d_{\olQ}$ of smallest dimension containing infinitely many terms of the given sequence $(x_j)_{j=1}^\infty$. Then $\dim V_{\olQ}\geq1$. Denote the subsequence of $(x_j)_{j=1}^\infty$ consisting of points in $V_{\olQ}$ by $(a_j)_{j=1}^\infty$. By the minimality of $\dim V_{\olQ}$, we see that $(a_j)_{j=1}^\infty$ is a generic sequence in $V_{\olQ}/\olQ$, consisting of pairwise distinct points.

Fix a finite extension $L/K$ such that
\begin{itemize}
\item there is a geometrically integral model $V$ of $V_{\olQ}$ over $L$, and we fix it;
\item $\Q(\lambda,\zeta_{d-1})\subseteq L$, where $\lambda\in\R_{>0}\cap\olQ$ is such that $\lambda^{d-1}=d$, and $\zeta_{d-1}$ is a fixed primitive $(d-1)$-th root of unity.
\end{itemize}
Then $(a_j)_{j=1}^\infty\subseteq V(\Lab)$ is a generic sequence in $V/\olQ$ because $\Kab\subseteq\Lab$.

We use the constructions of Proposition~\ref{prop:sys} and Theorem~\ref{thm:met} over $L$. The proof of Theorem~\ref{thm:met} shows that the fixed number field $L$ works for Theorem~\ref{thm:met}.

Set
\begin{align*}
X_V&=\left(X_d\times_{\MPoly^d}V\right)_{\mathrm{red}},\quad &&F_V=F_d|_{X_V}:X_V\to X_V,\\
p_V&=p_d|_{X_V}:X_V\to V,\quad &&s_V=s_d|_V:V\to X_V.
\end{align*}

For $n\geq0$, define
\[
\alpha_n=F_V^{\circ n}\circ s_V\quad\text{and}\quad C_n=\alpha_n(V).
\]
Let $N_0\geq0$ and $Z/\olQ$ be as in Lemma~\ref{lem:orb}. If
\[
\alpha_a=F_V^{\circ a}\circ s_V=F_V^{\circ b}\circ s_V=\alpha_b
\]
for some integers $b>a\geq0$, then \eqref{eq:pcf} implies that every point of $V(\C)$ is PCF, contradicting Thurston's rigidity (see \cite[Theorem~2.2]{BIJL2014}). Thus, by Lemma~\ref{lem:orb}, there are an irreducible component $Y_0$ of $Z$ (over $\olQ$) and an integer $l\geq1$ such that
\[
p_V(Y_0)=V,\quad \dim Y_0>\dim V,\quad F_V^{\circ l}(Y_0)=Y_0.
\]
After replacing $L$ with a finite extension, we may assume that $Y_0$ is defined over $L$.

Let $\nu:Y\to Y_0$ be the normalization. Then $Y$ is normal and geometrically integral. Set
\[
s:=\dim Y-\dim V=\dim Y_0-\dim V>0.
\]
Define
\[
\varphi=p_V|_{Y_0}\circ\nu:Y\to V\quad\text{and}\quad f_0=F_V^{\circ l}|_{Y_0}:Y_0\to Y_0.
\]
Then $\varphi(Y)=V$. Let $G:Y\to Y$ be the unique lift satisfying $\nu\circ G=f_0\circ\nu$. Then $\varphi\circ G=\varphi$. By Proposition~\ref{prop:deg}, both $f_0$ and $G$ are finite and surjective, and
\[
\deg G=d^{sl}.
\]
Thus, \textup{(i)} holds.

Let $\overline{Y}$ be the normalization of the closure of $Y_0$ in $\overline{X}_{d,L}$, and let $\overline{\nu}:\overline{Y}\to\overline{X}_{d,L}$ be the induced finite morphism. Note that $Y,\nu,G,\overline{Y},\overline{\nu}$ are still defined over $L$, and that $\overline{Y}$ is a normal projective compactification of $Y$. Then
\[
\overline{A}_Y:=\overline{\nu}^*\overline{A}
\]
is a nef adelic line bundle on $\overline{Y}/L$, and its underlying line bundle $A_Y=\overline{\nu}^*A$ is ample. Outside a finite set of places of $L$, $\overline{\nu}$ extends to a finite morphism of projective models in Theorem~\ref{thm:met}, and the pullback of the relatively ample model $\sA$ of $A$ is relatively ample. Thus, by Theorem~\ref{thm:met}, the metric of $\overline{A}_Y$ is induced by a relatively ample model at all but finitely many places of $L$.

Define $I=\{n\geq N_0\colon C_n\subseteq Y_0\}$. By Lemma~\ref{lem:orb}, we obtain
\begin{equation}\label{eq:Cnden}
Y_0=\overline{\bigcup_{n\in I}C_n}.
\end{equation}
Take a nonempty Zariski open subset $U\subseteq Y_0$ defined over $L$ such that
\[
\nu_U:=\nu|_{\nu^{-1}(U)}:\nu^{-1}(U)\to U
\]
is an isomorphism over $L$.

We consider \textup{(ii)}. Let $P\subseteq V(\Lab)$ be $\Gal(\Lab/L)$-invariant and satisfy \eqref{eq:pdense}. Define
\[
Q_P^0=\{\nu_U^{-1}(\alpha_n(a))\colon n\in I,\ a\in P,\ \alpha_n(a)\in U\}\quad\text{and}\quad Q_P=\bigcup_{r=0}^\infty G^{\circ r}(Q_P^0).
\]
By definition, we have $Q_P\subseteq Y(\Lab)\cap\varphi^{-1}(P)$ and $G(Q_P)\subseteq Q_P$. Since $\alpha_n$, $\nu_U$, and $G$ are defined over $L$, the sets $Q_P^0$ and $Q_P$ are Galois-invariant.

Fix $\epsilon>0$ and a nonempty Zariski open subset $O\subseteq Y_{\olQ}$. Then $\nu_U(O\cap\nu^{-1}(U))$ is a nonempty Zariski open subset of $U_{\olQ}$. By \eqref{eq:Cnden}, take an index $n\in I$ such that
\[
\alpha_n^{-1}(\nu_U(O\cap\nu^{-1}(U)))
\]
is a nonempty Zariski open subset of $V_{\olQ}$. By \eqref{eq:pdense}, take $a\in P\cap \alpha_n^{-1}(\nu_U(O\cap\nu^{-1}(U)))$ with $h_{\mathrm{crit}}(a)<\epsilon/(k_dd^n)$. Then $x:=\nu_U^{-1}(\alpha_n(a))\in O\cap Q_P^0$, and \eqref{eq:cmp} shows
\[
h_{\overline{A}_Y}(x)=k_dd^nh_{\mathrm{crit}}^{\max}(a)\leq k_dd^nh_{\mathrm{crit}}(a)<\epsilon.
\]
Thus, \eqref{eq:qdense} holds.

Let $x_0=\nu_U^{-1}(\alpha_n(a))\in Q_P^0$, where $n\in I$, $a\in P$, and $\alpha_n(a)\in U$. Let $r\geq0$ be arbitrary, and set $x=G^{\circ r}(x_0)\in Q_P$. Then $\varphi(x)=a$. Since $\varphi,G,\nu_U^{-1},\alpha_n$ are defined over $L$, we obtain
\[
L(a)\subseteq L(x)\subseteq L(x_0)\subseteq L(a),
\]
and hence
\[
L(x)=L(a)=L(\varphi(x))\quad\text{in }\Lab.
\]
Since
\[
\overline{\nu}(x)=\nu(x)=(F_d^{\circ(n+rl)}\circ s_d)(a)\quad\text{and}\quad\overline{\nu}(G(x))=\nu(G(x))=(F_d^{\circ(n+(r+1)l)}\circ s_d)(a),
\]
\eqref{eq:orbitheight} implies
\[
h_{\overline{A}_Y}(G(x))=k_d d^{n+(r+1)l}h_{\mathrm{crit}}^{\max}(a)=d^lh_{\overline{A}_Y}(x).
\]
Thus, \eqref{eq:q} holds.
\end{proof}

\subsection{The parameter image of a bi-finite correspondence}
We use the assumptions, notation, and conclusion of Proposition~\ref{prop:orb}, and set
\[
b=\dim V\geq1.
\]

In the following lemma, we examine how the intertwined relation interacts with bi-finite correspondences on $\overline{Y}_{\olQ}$.
\begin{lemma}\label{lem:base}
Let $C$ be a bi-finite correspondence on $\overline{Y}_{\olQ}$ such that
\[
(G\times G)(C\cap Y_{\olQ}^2)\subseteq C\cap Y_{\olQ}^2.
\]
Let $C_1,\ldots,C_t$ be irreducible components of $C$. For $1\leq i\leq t$, define
\[
D_i=\overline{(\varphi\times\varphi)(C_i\cap Y_{\olQ}^2)}\subseteq V_{\olQ}^2.
\]

\textup{(1)} For $1\leq i\leq t$, $\dim D_i=b$, both projections $D_i\to V_{\olQ}$ are generically finite and dominant, and the generic point of $D_i$ belongs to the intertwined relation $\approx$.

\textup{(2)} Let $u_V\in L(V)\setminus L$ be as in Proposition~\ref{prop:rel}. Set $U=\varphi^*(u_V)\in L(\overline{Y})\setminus L$. For $1\leq i\leq t$, one has
\[
\pr_1^*(U)=\pr_2^*(U)\quad\text{in }\olQ(C_i),
\]
where $\pr_1,\pr_2: C_i\to\overline{Y}_{\olQ}$ are the two projections.

\textup{(3)} There is a proper closed subset $J_C\subsetneq V_{\olQ}$ such that for every $(x,y)\in(C\cap Y^2)(\C)$, either $\varphi(x)\in J_C(\C)$, or $\varphi(x)\approx\varphi(y)$.
\end{lemma}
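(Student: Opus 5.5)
Fix a component $C_i$; since $C$ is bi-finite, both projections $C_i\to\overline{Y}_{\olQ}$ are finite and surjective, so $\dim C_i=\dim Y=b+s$. Because $G\times G$ is finite on $Y^2$, maps $C\cap Y^2$ into itself and permutes the finitely many components meeting $Y^2$, replacing $G$ by an iterate $G^{\circ k}$ we may assume $(G\times G)(C_i^\circ)\subseteq C_i^\circ$ for every $i$, where $C_i^\circ=C_i\cap Y_{\olQ}^2$. Then $C_i^\circ$ is an integral subvariety of $Y^2\subseteq(X_V\times X_V)^{\nu}$ invariant under $H=G^{\circ k}\times G^{\circ k}$, and its image $D_i$ under $\varphi\times\varphi$ is an integral subvariety of $V^2$ over which $C_i^\circ$ is dominant. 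I would push $C_i^\circ$ forward by $\nu\times\nu$ to an $H$-invariant integral subvariety $Z$ of $X_d^2\times_{(\MPoly^d)^2}D_i$ and apply the second half of Proposition~\ref{prop:deg}: $\deg H=d^{(\dim Z-\dim D_i)kl}$. On the other hand $H=G^{\circ k}\times G^{\circ k}$ restricted to $C_i$ has degree computable from the first projection: since $\pr_1:C_i\to\overline{Y}$ is finite surjective and equivariant for $H$ and $G^{\circ k}$, $\deg(H|_{C_i})=\deg G^{\circ k}=d^{skl}$. Hence $\dim C_i-\dim D_i=s$, i.e.\ $\dim D_i=b$. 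Since $\pr_1(C_i)=\overline{Y}$ and $\varphi$ is surjective, both projections $D_i\to V_{\olQ}$ are dominant, hence generically finite by the dimension count. This gives the first two assertions of (1).

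For the intertwining claim in (1), I would work over the generic point $\eta$ of $D_i$, giving two polynomials $f_1,f_2$ (monic centered representatives over a finite extension of $\olQ(D_i)$). The fiber of $C_i$ over $\eta$ is an $s$-dimensional subvariety $W$ of $\Sym^{d-1}(\A^1)^2$ (after passing to the ordered/$\mu_{d-1}$ cover as in the proof of Proposition~\ref{prop:deg}) invariant under $(\Sym^{d-1}f_1)^{\circ N}\times(\Sym^{d-1}f_2)^{\circ N}$, and whose two projections to the fibers $Y_{\eta_1},Y_{\eta_2}$ are finite (since $C_i\to\overline{Y}$ is finite in each factor). Lifting to $(\A^1)^{d-1}\times(\A^1)^{d-1}$ we get a subvariety invariant under a split product of iterates of $f_1$ and $f_2$, with finite projections onto its images in each factor. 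The invariant-subvariety decomposition for split polynomial endomorphisms \cite[Proposition~5.3(i)]{GN17} says such a subvariety is a product of curves in $\A^1\times\A^1$ and coordinate points/factors; finiteness of both projections together with $\dim W=s=\dim$ of each image forces at least one factor to be an invariant curve in $\A^1\times\A^1$ under $(f_1^{\circ N},f_2^{\circ N})$ projecting onto both axes. That is exactly $f_1\approx f_2$, and by constructibility (Proposition~\ref{prop:rel}(1)) the generic point of $D_i$ lies in $\approx$. This step — extracting a genuinely mixed curve from $W$ — is the main obstacle; the worry is that $W$ could be a product of something in the first factor and something in the second, and I would rule this out precisely by the bi-finiteness (a product $W_1\times W_2$ with $\dim W_2>0$ would have positive-dimensional fibers of the first projection).

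Part (2) then follows: $D_i$ is an irreducible subvariety of the closure of $\approx$ in $V^2$ of dimension $b$ with dominant projections, so it is an irreducible component $Z$ as in Proposition~\ref{prop:rel}(2) (dimension $b$ is maximal by generic finiteness of the relation, Proposition~\ref{prop:Md}), giving $p_1^*u_V=p_2^*u_V$ on $D_i$, and pulling back along $C_i\to D_i$ gives $\pr_1^*U=\pr_2^*U$. For (3), let $J_C$ be the union over $i$ of the closures of $\pr_1$-images of the proper closed subsets of $D_i$ outside the constructible set $\approx$ (each is proper in $V$ since $D_i\to V$ is generically finite), together with the images of components of $C$ not meeting $Y^2$ densely; for $(x,y)\in C\cap Y^2$ with $\varphi(x)\notin J_C$, the point $(\varphi(x),\varphi(y))$ lies in the good open part of some $D_i$, hence $\varphi(x)\approx\varphi(y)$.
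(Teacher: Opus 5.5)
Your route is the paper's: compare the degree of $(G\times G)^{\circ a}$ on a component, computed via the first projection, with the product form of Proposition~\ref{prop:deg} to get $\dim D_i=b$. Then lift a periodic component of the generic fiber to the ordered cover and apply \cite[Proposition~5.3(i)]{GN17}, and finish with Proposition~\ref{prop:rel}(2) and constructibility.

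The initial reduction, however, fails as stated. The hypothesis $(G\times G)(C\cap Y_{\olQ}^2)\subseteq C\cap Y_{\olQ}^2$ only yields a self-map $\sigma$ of the set of components, $(G\times G)(C_i^\circ)=C_{\sigma(i)}^\circ$. Nothing forces $\sigma$ to be bijective, and a strictly preperiodic component is never made invariant by passing to an iterate of $G$. For such a $C_i$, neither your degree count nor your generic-fiber argument applies. Yet (1)--(3) are asserted for every component, and (3) genuinely needs all of them, since $(x,y)$ may lie on any component.

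The missing observation, which is how the paper handles this, is that $\varphi\circ G=\varphi$ gives $D_{\sigma(i)}=D_i$. So for each $i$, pick a periodic index $j=\sigma^{\circ q}(i)$ with $D_j=D_i$, and run your argument on $C_j$ with $H=(G\times G)^{\circ a}$, where $a$ is the period. Parts (1) and (3) only involve the $D_i$. Part (2) for arbitrary $i$ follows by pulling back the identity $p_1^*u_V=p_2^*u_V$ on $D_i$ along the dominant map $C_i^\circ\to D_i$.

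Two smaller points. First, \cite[Proposition~5.3(i)]{GN17} requires the two polynomials to be non-exceptional. This holds because $\eta_1,\eta_2$ are generic points of the positive-dimensional $V$ and $E_d$ is finite, but you should say it. Second, your paraphrase of that result as producing ``a product of curves in $\A^1\times\A^1$ and coordinate factors'' is not accurate in general; for example, a small diagonal in three coordinates carrying the same polynomial is invariant but is not such a product. What you actually use is the splitting statement: if $f_1\not\approx f_2$, then an invariant $W$ equals $\pi_1(W)\times\pi_2(W)$. Together with $\dim\pi_1(W)=\dim\pi_2(W)=\dim W=s>0$, this gives the contradiction $s=2s$, which is exactly the paper's contrapositive form of your non-product argument.

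Your explicit check in (2) that $D_i$ is an irreducible component of the closure of $\approx$, via the dimension bound from Proposition~\ref{prop:Md}, is correct and slightly more careful than the paper.
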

\begin{proof}
For $1\leq i\leq t$, set $C_i^\circ=C_i\cap Y_{\olQ}^2$. Then $C_i^\circ$ is irreducible and Zariski dense in $C_i$, with $\dim C_i^\circ=\dim Y=b+s$. As $G\times G:Y_{\olQ}^2\to Y_{\olQ}^2$ is finite, we see that
\[
(G\times G)(C_i^\circ)=C_{\sigma(i)}^\circ
\]
for a unique $\sigma(i)\in\{1,\ldots,t\}$. Since $\varphi\circ G=\varphi$,
\[
D_{\sigma(i)}
=\overline{(\varphi\times\varphi)((G\times G)(C_i^\circ))}
=D_i.
\]
Thus, for every $1\leq i\leq t$, there are integers $q(i)\geq0$ and $a(i)\geq1$ such that the index
\[
j(i)=\sigma^{\circ q(i)}(i)\in\{1,\ldots,t\}
\]
satisfies $\sigma^{\circ a(i)}(j(i))=j(i)$ and $D_{j(i)}=D_i$.

\smallskip

\textup{(1)} Fix $1\leq i\leq t$. Set $j=j(i)$, $a=a(i)$, $D=D_i=D_j$, and $r_D=\dim D$. Define
\[
H=(G\times G)^{\circ a}|_{C_j^\circ}:C_{j}^{\circ}\to C_j^{\circ}.
\]
Let $\rho_1:C_j^\circ\to Y_{\olQ}$ be the restriction of the first projection to $C_j^\circ$. Then $\rho_1$ is dominant and generically finite, with $\rho_1\circ H=G^{\circ a}\circ\rho_1$. Thus, by Proposition~\ref{prop:deg},
\begin{equation}\label{eq:deg1}
\deg H=\deg G^{\circ a}=d^{asl}.
\end{equation}
In the proof of Proposition~\ref{prop:orb}, the finite normalization map
\[
\nu:Y\to Y_0\subseteq X_d\times_{\MPoly^d}V
\]
satisfies $\nu\circ G=F_d^{\circ l}|_{Y_0}\circ\nu$. Using the product form of Proposition~\ref{prop:deg}, we deduce that
\begin{equation}\label{eq:deg2}
\deg H=d^{al(\dim C_j-r_D)}=d^{al(b+s-r_D)}.
\end{equation}
From \eqref{eq:deg1} and \eqref{eq:deg2}, we obtain $r_D=b$, i.e., $\dim D=\dim V$. Clearly, both projections $D\to V$ are dominant; hence both are generically finite.

Let 
\[
\chi:A_d\times(\A^1)^{d-1}\to A_d\times S_{d-1}\to X_d
\]
be the finite surjective morphism constructed in \S~\ref{sec:2.1}. Define
\[
\widetilde{F}:A_d\times(\A^1)^{d-1}\to A_d\times(\A^1)^{d-1},\,(\ba,z_1,\ldots,z_{d-1})\mapsto(\ba,f_{\ba}(z_1),\ldots,f_{\ba}(z_{d-1})).
\]
Then $\chi\circ\widetilde{F}=F_d\circ\chi$.

Let $\Lambda=\overline{\olQ(D)}$ and $\eta=(\eta_1,\eta_2)\in D(\Lambda)$ be a geometric generic point. Since $\eta_1,\eta_2$ are generic in the positive-dimensional $V$ and the exceptional locus $E_d\subseteq\MPoly^d$ is finite, both $\eta_1$ and $\eta_2$ are non-exceptional. Write $C_{j,\eta}=C_j^\circ\times_D\Spec\Lambda$. Define
\[
\widetilde{C}_\eta=\left(C_{j,\eta}\times_{(X_d^2)_\Lambda}((A_d\times(\A^1)^{d-1})^2)_\Lambda\right)_{\red}.
\]
The projection $\widetilde{C}_\eta\to C_{j,\eta}$ is finite and surjective. The morphism $H$ and $\widetilde{F}^{\circ la}\times\widetilde{F}^{\circ la}$ induce a finite endomorphism $\widetilde{H}$ of $\widetilde{C}_\eta$. Take a periodic irreducible component $\widetilde{W}$ dominating an irreducible component $T$ of $C_{j,\eta}$, and choose $m\geq1$ with $\widetilde{H}^{\circ m}(\widetilde{W})=\widetilde{W}$. Such a component exists because the finite map sends components of dimension $s$ to components of dimension $s$. The morphism $\widetilde{W}\to T$ is finite and surjective, so $\dim\widetilde{W}=\dim T=s$.

The parameter coordinates on $\widetilde{W}$ are constant, say $\ba_k\in A_d(\Lambda)$ with $[f_{\ba_k}]=\eta_k$ for $1\leq k\leq2$. Let $W$ be the reduced image of $\widetilde{W}$ in $(\A^1_\Lambda)^{d-1}\times(\A^1_\Lambda)^{d-1}$ under the ordered-coordinate projection. Since $\widetilde{W}\to W$ is finite, we obtain $\dim W=s$. Set $f_k=f_{\ba_k}$ for $1\leq k\leq2$. Then we have
\begin{equation}\label{eq:Wper}	\left((f_1^{\circ alm})^{\times(d-1)}\times(f_2^{\circ alm})^{\times(d-1)}\right)(W)=W.
\end{equation}

For $1\leq k\leq2$, let $\rho_{\eta,k}:T\to Y_{\eta_k}$ be induced by the $k$-th projection from $C_j^\circ$, and let $\nu_{\eta,k}:Y_{\eta_k}\to(X_d)_{\eta_k}$ be induced by $\nu$. Since $\rho_{\eta,k}$ is quasi-finite and $\nu_{\eta,k}$ is finite, we obtain $\dim\nu_{\eta,k}(\rho_{\eta,k}(T))=s$. Let $q:A_d\times S_{d-1}\to X_d$ be the quotient map, and define
\[
\chi_{\ba_k}:(\A^1_\Lambda)^{d-1}\to(X_d)_{\eta_k},\,(z_1,\ldots,z_{d-1})\mapsto q\left(\ba_k,\sum_{h=1}^{d-1}[z_h]\right).
\]
For the coordinate projections $\pi_k:W\to(\A^1_\Lambda)^{d-1}$, we have
\[
\chi_{\ba_k}(\pi_k(W))=\nu_{\eta,k}(\rho_{\eta,k}(T)).
\]
Since $\chi_{\ba_k}$ is finite, we conclude that
\[
\dim W=\dim\pi_1(W)=\dim\pi_2(W)=s.
\]
Suppose that $\eta_1\not\approx\eta_2$. Then $f_1^{\circ alm}\not\approx f_2^{\circ alm}$ by definition. Since both $f_{\ba_1}$ and $f_{\ba_2}$ are non-exceptional, by \cite[Proposition~5.3(i)]{GN17} and \eqref{eq:Wper} we conclude that
\[
W=\pi_1(W)\times\pi_2(W).
\]
So
\[
s=\dim W=\dim\pi_1(W)+\dim\pi_2(W)=2s,
\]
contradicting $s>0$. Thus, $\eta_1\approx\eta_2$.

\medskip

\textup{(2)} Since $\varphi$ is dominant, $\varphi^*:L(V)\hookrightarrow L(Y)=L(\overline{Y})$ is well-defined and injective; hence $u_V\notin L$ implies $U=\varphi^*(u_V)\in L(\overline{Y})\setminus L$. Fix $1\leq i\leq t$. Let $q_1,q_2:D_i\to V_{\olQ}$ be the two projections, and let $\psi_i:C_i^\circ\to D_i$ be the restriction of $\varphi\times\varphi$. Note that $q_1,q_2,\psi_i$ are all dominant, so the pullback maps $q_1^*,q_2^*,\psi_i^*$ on function fields are well-defined. By \textup{(1)} and Proposition~\ref{prop:rel}, we obtain $q_1^*(u_V)=q_2^*(u_V)$ in $\olQ(D_i)$. Using $q_k\circ\psi_i=\varphi\circ\pr_k|_{C_i^\circ}$ ($1\leq k\leq 2$), we deduce that
\[
\pr_1^*(U)=\psi_i^*(q_1^*(u_V))=\psi_i^*(q_2^*(u_V))=\pr_2^*(U)\quad\text{in }\olQ(C_i).
\]

\medskip

\textup{(3)} Let $\sI\subseteq V_{\olQ}^2$ be the intertwined relation on $V$, which is a constructible subset of $V_{\olQ}^2$ by Proposition~\ref{prop:rel}. For each $1\leq i\leq t$, by \textup{(1)}, choose a nonempty Zariski open subset $O_i\subseteq D_i$ with $O_i\subseteq\sI$, and set $E_i=\overline{q_1(D_i\setminus O_i)}\subseteq V_{\olQ}$. Since each $D_i$ is irreducible of dimension $b$, the closed subset
\[
J_C=\bigcup_{i=1}^t E_i
\]
of $V$ has dimension
\[
\max_i\dim E_i\leq\max_i\dim(D_i\setminus O_i)<b=\dim V.
\]
So $J_C\subsetneq V_{\olQ}$. Let $(x,y)\in(C\cap Y^2)(\C)$ satisfy $\varphi(x)\notin J_C(\C)$. Take $1\leq i_0\leq t$ with $(x,y)\in C_{i_0}^\circ$. Then
\[
(\varphi(x),\varphi(y))\in O_{i_0}(\C)\subseteq\sI(\C);
\]
hence $\varphi(x)\approx\varphi(y)$.
\end{proof}

\subsection{Inertia and Frobenius}\label{sec:infr}
We finish the proof of Theorem~\ref{thm:main}.
\begin{proof}[Proof of Theorem~\ref{thm:main}]
Let $d\geq2$ be an integer, and let $K$ be a number field. If there is a constant $\epsilon_0>0$ such that
\[
\left\lbrace\alpha\in\MPoly^d(\Kab)\colon h_{\mathrm{crit}}(\alpha)<\epsilon_0\right\rbrace
\]
is finite, then for
\[
\epsilon_{K,d}=\min\left(\{\epsilon_0\}\cup\left\lbrace h_{\mathrm{crit}}(\alpha)\colon \alpha\in\MPoly^d(\Kab),\ 0<h_{\mathrm{crit}}(\alpha)<\epsilon_0\right\rbrace\right)>0,
\]
the set
\[
\left\lbrace\alpha\in\MPoly^d(\Kab)\colon h_{\mathrm{crit}}(\alpha)<\epsilon_{K,d}\right\rbrace=\left\lbrace\alpha\in\MPoly^d(\Kab)\colon h_{\mathrm{crit}}(\alpha)=0\right\rbrace
\]
is finite. Suppose, for the sake of contradiction, that for all $\epsilon>0$, the set
\[
\left\lbrace\alpha\in\MPoly^d(\Kab)\colon h_{\mathrm{crit}}(\alpha)<\epsilon\right\rbrace
\]
is infinite. Then there is a sequence $(x_j)_{j=1}^\infty\subseteq\MPoly^d(\Kab)$ of pairwise distinct points with
\[
\lim_{j\to\infty}h_{\mathrm{crit}}(x_j)=0.
\]
We use the notation and conclusion of Proposition~\ref{prop:orb}.

Take a finite subset $S$ of $M_L$ containing all archimedean places such that in Proposition~\ref{prop:orb}, one can choose a model $(\mathcal{Y},\sA_Y)$ of $(\overline{Y},\overline{A}_Y)$ over $\sO_{L,S}$, where $\sA_Y$ is relatively ample and $\mathcal{Y}$ is normal with geometrically integral special fibers.

Let $M_d$ be as in Proposition~\ref{prop:Md}. Applying the Chebotarev density theorem \cite[\S~\Rom{7}.13]{Neukirch1999} to the Galois closure of $L/\Q$ in $\olQ$, we can take a rational prime $p>\max\{2,M_d\}$ splitting completely in $L$ and a place $v\in M_L\setminus S$ lying over $p$. Then $L_v=\Q_p$ with the residue field $\F_p$. Fix an extension $\bar{v}$ of $v$ to $\olQ$. Define
\begin{align*}
J_{\mathrm{w}}&=\left\lbrace j\in\Z_{>0}\colon L(a_j)_w/\Q_p\text{ is wildly ramified, where }w=\bar{v}|_{L(a_j)}\right\rbrace,\\
J_{\mathrm{t}}&=\left\lbrace j\in\Z_{>0}\colon L(a_j)_w/\Q_p\text{ is tamely ramified, where }w=\bar{v}|_{L(a_j)}\right\rbrace=\Z_{>0}\setminus J_{\mathrm{w}}.
\end{align*}
Then at least one of $J_{\mathrm{w}}$ and $J_{\mathrm{t}}$ is infinite. Define
\[
P_{\mathrm{w}}=\Gal(\Lab/L)\cdot\{a_j\colon j\in J_{\mathrm{w}}\}\quad\text{and}\quad P_{\mathrm{t}}=\Gal(\Lab/L)\cdot\{a_j\colon j\in J_{\mathrm{t}}\}.
\]
Then
\[
P:=\Gal(\Lab/L)\cdot\{a_j\colon j\geq1\}=P_{\mathrm{w}}\sqcup P_{\mathrm{t}}.
\]
Since $\Gal(\Lab/L)$ is abelian, for every $\alpha\in P$ with $w=\bar{v}|_{L(\alpha)}$, the extension $L(\alpha)_w/\Q_p$ is wildly ramified if $\alpha\in P_{\mathrm{w}}$, and it is tamely ramified if $\alpha\in P_{\mathrm{t}}$. Set $\delta=p^{-1/(p-1)}\in(0,1)$.

\medskip

\textbf{Case 1.} Suppose that $J_{\mathrm{w}}$ is infinite.

Then $\{a_j\colon j\in J_{\mathrm{w}}\}$ is generic in $V/\olQ$, and the subset $P_{\mathrm{w}}\subseteq V(\Lab)$ is $\Gal(\Lab/L)$-invariant and satisfies \eqref{eq:pdense}. Let $Q_{\mathrm{w}}=Q_{P_{\mathrm{w}}}$ be given by Proposition~\ref{prop:orb}.

Define
\[
\mathcal{E}=\{L(x)\colon x\in Q_{\mathrm{w}}\}.
\]
Let $E\in\mathcal{E}$ be arbitrary, and denote $w=\bar{v}|_E$. Fix $x\in Q_{\mathrm{w}}$ with $E=L(x)=L(\varphi(x))$. Under the canonical isomorphism
\[
D_w(E/L)\simeq\Gal(E_w/\Q_p),
\]
the inertia subgroup $I_w(E/L)$ corresponds to $I(E_w/\Q_p)$. By Lemma~\ref{lem:in}(1), there exists $\tau_E\in I_w(E/L)$ of order $p$ such that
\begin{equation}\label{eq:inest}
|\tau_E(z)-z|_w\leq\delta\quad(z\in\sO_{E_w}).
\end{equation}
For every field $E\in\mathcal{E}$, fix one such $\tau_E\in\Gal(E/L)$, and an extension $\tau_E\in\Gal(\Lab/L)$ still denoted by $\tau_E$. Define
\[
B_{\mathrm{In}}=\left\lbrace(x,\tau_{L(x)}(x))\colon x\in Q_{\mathrm{w}}\right\rbrace\subseteq Q_{\mathrm{w}}^2\subseteq Y(\Lab)^2.
\]
Since $\Gal(\Lab/L)$ is abelian, $B_{\mathrm{In}}$ is $\Gal(\Lab/L)$-invariant, and hence $\Gal(\olQ/L)$-invariant. From \eqref{eq:q} and $\varphi\circ G=\varphi$, for $x\in Q_{\mathrm{w}}$ we have
\[
L(G(x))=L(\varphi(x))=L(x)\quad\text{and}\quad G(\tau_{L(x)}(x))=\tau_{L(G(x))}(G(x)).
\]
So
\begin{equation}\label{eq:BIninv}
(G\times G)(B_{\mathrm{In}})\subseteq B_{\mathrm{In}}.
\end{equation}

Use the projective coordinates defining the tube at $v$ as in \S~\ref{sec:tube}. For
\[
x=[x_0:\cdots:x_N]\in Q_{\mathrm{w}}
\]
with $E=L(x)$ and $w=\bar{v}|_E$, choose $0\leq i\leq N$ with $|x_i|_w=\max_j|x_j|_w$, and set $z_j=x_j/x_i\in\sO_{E_w}$. From \eqref{eq:inest}, we obtain
\[
\left|\frac{\tau_E(x_j)}{\tau_E(x_i)}-\frac{x_j}{x_i}\right|_w
=|\tau_E(z_j)-z_j|_w\leq\delta.
\]
So $B_{\mathrm{In}}\subseteq T_1(\delta)$.

Let
\[
Z_{\mathrm{In}}=Z(B_{\mathrm{In}})=\bigcap_{n=1}^\infty\overline{(B_{\mathrm{In}})_{1/n}}\subseteq\overline{Y}_{\olQ}^2
\]
be as in Proposition~\ref{prop:tube}. By Noetherianity, there is an integer $n_0\geq1$ such that $Z_{\mathrm{In}}=\overline{(B_{\mathrm{In}})_{1/n_0}}$ is Zariski closed in $\overline{Y}_{\olQ}^2$. Let
\[
\pr_1,\pr_2:\overline{Y}_{\olQ}^2\to\overline{Y}_{\olQ}
\]
be the two projections. By \eqref{eq:qdense} and the construction of $Z_{\mathrm{In}}$, we see that some irreducible component $C'$ of $Z_{\mathrm{In}}$ dominates the first factor $\overline{Y}_{\olQ}$; let $C'$ be such a component. Assume that $\pr_2(C')=H\subsetneq\overline{Y}_{\olQ}$. Define
\[
H_L=\bigcup_{\gamma\in\Gal(\olQ/L)}\gamma(H).
\]
Then $H_L\subsetneq\overline{Y}_{\olQ}$ is a proper closed $\Gal(\olQ/L)$-invariant subset because $H$ is defined over a finite extension of $L$. For every $(x,\tau_{L(x)}(x))\in B_{\mathrm{In}}\cap C'$, we have $\tau_{L(x)}(x)\in H\subseteq H_L$, and then $x\in H_L$. We deduce that $\pr_1(C')\subseteq H_L$, contradicting the dominance of $\pr_1|_{C'}$. Thus, the second projection $\pr_2|_{C'}:C'\to\overline{Y}_{\olQ}$ is also dominant. By Proposition~\ref{prop:tube}, both projections $\pr_1|_{C'},\pr_2|_{C'}:C'\to\overline{Y}_{\olQ}$ are finite and surjective.

Using \eqref{eq:BIninv}, we see that there are an integer $m\geq1$ and irreducible components $C_1',\ldots,C_m'$ of $Z_{\mathrm{In}}$ whose two projections are finite and surjective such that
\[
\overline{(G\times G)(C_j'\cap Y_{\olQ}^2)}=C_{j+1}'\quad\text{in }\overline{Y}_{\olQ}^2
\]
for $j$ modulo $m$. Let
\[
C_{\mathrm{In}}=\bigcup_{j=1}^{m}C_j'
\]
with the reduced structure. Then $C_{\mathrm{In}}$ is a bi-finite correspondence on $\overline{Y}_{\olQ}$ and
\[
(G\times G)(C_{\mathrm{In}}\cap Y_{\olQ}^2)\subseteq C_{\mathrm{In}}\cap Y_{\olQ}^2.
\]

Let $J_{C_{\mathrm{In}}}\subsetneq V$ be given by Lemma~\ref{lem:base}(3). Clearly, there is $x\in Q_{\mathrm{w}}$ such that
\[
(x,\tau_{L(x)}(x))\in C_{\mathrm{In}}\quad\text{and}\quad\varphi(x)\notin J_{C_{\mathrm{In}}}.
\]
Set $E=L(x)=L(\varphi(x))$. By Lemma~\ref{lem:base}(3), we obtain
\[
a:=\varphi(x)\approx\varphi(\tau_E(x))=\tau_E(a).
\]
Since $\approx$ is defined over $\Q$, applying $\tau_E^j$ for $0\leq j<p$ implies
\[
a\approx\tau_E(a)\approx\cdots\approx\tau_E^{p-1}(a).
\]
Since $E=L(a)$ and $\tau_E$ has order $p$ in $\Gal(E/L)$, the $p$ points
\[
a,\tau_E(a),\ldots,\tau_E^{p-1}(a)\in V(\Lab)
\]
are pairwise distinct and intertwined, contradicting $p>M_d$.

\medskip

\textbf{Case 2.} Suppose that $J_{\mathrm{t}}$ is infinite.

Then $\{a_j\colon j\in J_{\mathrm{t}}\}$ is generic in $V/\olQ$, and the subset $P_{\mathrm{t}}\subseteq V(\Lab)$ is $\Gal(\Lab/L)$-invariant and satisfies \eqref{eq:pdense}. Let $Q_{\mathrm{t}}=Q_{P_{\mathrm{t}}}$ be given by Proposition~\ref{prop:orb}.

For $a\in P_{\mathrm{t}}$ with $w=\bar{v}|_{L(a)}$, Lemma~\ref{lem:in}(2) shows that the ramification index
\[
e(L(a)_w/\Q_p)\mid(p-1).
\]
For $x\in Q_{\mathrm{t}}$, by Proposition~\ref{prop:orb} we have $a=\varphi(x)\in P_{\mathrm{t}}$ and $L(x)=L(a)$, hence
\[
e(L(x)_w/\Q_p)\leq p-1.
\]
Take $\sigma\in\Gal(\olQ/L)$ in the decomposition group at the chosen place $\bar{v}$ of $\olQ$, inducing $z\mapsto z^p$ on the residue field $\overline{\F}_p$, and also denote the restriction $\sigma|_{\Lab}$ by $\sigma$. Define
\[
B_{\mathrm{Fr}}=\left\lbrace(x,\sigma(x))\colon x\in Q_{\mathrm{t}}\right\rbrace.
\]
Since $\Gal(\Lab/L)$ is abelian, $B_{\mathrm{Fr}}$ is $\Gal(\Lab/L)$-invariant, and hence $\Gal(\olQ/L)$-invariant. Since $G(Q_{\mathrm{t}})\subseteq Q_{\mathrm{t}}$ and $G$ is defined over $L$, we obtain
\begin{equation}\label{eq:BFrinv}
(G\times G)(B_{\mathrm{Fr}})\subseteq B_{\mathrm{Fr}}.
\end{equation}
Let $x\in Q_{\mathrm{t}}$ with $w=\bar{v}|_{L(x)}$. For $z\in\sO_{L(x)_w}$, the choice of $\sigma$ implies $\sigma(z)-z^p\in\mathfrak{m}_{L(x)_w}$, and then
\[
|\sigma(z)-z^p|_w\leq p^{-1/e(L(x)_w/\Q_p)}\leq p^{-1/(p-1)}=\delta.
\]
Choosing a maximal projective coordinate of $x$ as in Case 1 shows
\[
(x,\sigma(x))\in T_p(\delta).
\]

Let $Z_{\mathrm{Fr}}=Z(B_{\mathrm{Fr}})$ be as in Proposition~\ref{prop:tube}, which is Zariski closed in $\overline{Y}_{\olQ}^2$. Let $\pr_1,\pr_2:\overline{Y}_{\olQ}^2\to\overline{Y}_{\olQ}$ be the two projections. As in Case 1, some irreducible component $C'$ of $Z_{\mathrm{Fr}}$ dominates the first factor $\overline{Y}_{\olQ}$; for such a component $C'$, both projections $\pr_1|_{C'},\pr_2|_{C'}:C'\to\overline{Y}_{\olQ}$ are finite and surjective.

Using \eqref{eq:BFrinv}, we see that there are an integer $m\geq1$ and irreducible components $C_1',\ldots,C_m'$ of $Z_{\mathrm{Fr}}$ whose two projections are finite and surjective such that
\[
\overline{(G\times G)(C_j'\cap Y_{\olQ}^2)}=C_{j+1}'\quad\text{in }\overline{Y}_{\olQ}^2
\]
for $j$ modulo $m$. Let
\[
C_{\mathrm{Fr}}=\bigcup_{j=1}^{m}C_j'
\]
with the reduced structure. Then $C_{\mathrm{Fr}}$ is a bi-finite correspondence on $\overline{Y}_{\olQ}$ and
\[
(G\times G)(C_{\mathrm{Fr}}\cap Y_{\olQ}^2)\subseteq C_{\mathrm{Fr}}\cap Y_{\olQ}^2.
\]
Let $\nu:W\to C_1'$ be the normalization, and set $\pi_k=\pr_k|_{C_1'}\circ\nu:W\to\overline{Y}_{\olQ}$ for $1\leq k\leq2$. Applying Proposition~\ref{prop:tube}, we obtain
\[
\pi_2^*A_{Y,\olQ}\equiv p\pi_1^*A_{Y,\olQ}.
\]
Let $u_V\in L(V)\setminus L$ be as in Proposition~\ref{prop:rel}. By Lemma~\ref{lem:base}, the nonconstant rational function $U=\varphi^*(u_V)\in L(\overline{Y})\setminus L$ satisfies $\pi_1^*(U)=\pi_2^*(U)$ in $\olQ(W)$. Take a finite extension $L'/L$ over which $C_1'$ is defined; then $W,\nu,\pi_1,\pi_2$ are also defined over $L'$. We have $U\in L'(\overline{Y}_{L'})\setminus L'$ and $\pi_1^*(U)=\pi_2^*(U)$ in $L'(W)$. Thus, applying Lemma~\ref{lem:inv} over $L'$ gives a contradiction.

\smallskip

We obtain a contradiction in both cases. Therefore, there exists a constant $\epsilon_{K,d}>0$ such that
\[
\left\lbrace\alpha\in\MPoly^d(\Kab)\colon h_{\mathrm{crit}}(\alpha)<\epsilon_{K,d}\right\rbrace=\left\lbrace\alpha\in\MPoly^d(\Kab)\colon h_{\mathrm{crit}}(\alpha)=0\right\rbrace
\]
is finite.
\end{proof}

\section{Specializations of a marked point}\label{sec:app}
In this section, we prove Theorem~\ref{thm:pair}. The argument is similar to that for Theorem~\ref{thm:main}, and we omit repetitive details for brevity.

\begin{proof}[Proof of Theorem~\ref{thm:pair}]
For every finite extension $K_2/K_1$ of number fields, one has $\Kab_1\subseteq\Kab_2$. Thus, we may replace $K$ with a finite extension whenever necessary.

Write
\[
f_t(z)=A_d(t)z^d+A_{d-1}(t)z^{d-1}+\cdots+A_0(t),\quad t\in B(\olQ),
\]
where $A_d,\ldots,A_0\in K[B]$ are regular functions on $B$, and $A_d$ is everywhere non-vanishing on $B(\olQ)$. In particular, $A_d\in K[B]^\times$.

It suffices to consider the smooth case. After deleting finitely many points of $B$ and replacing $K$ with a finite extension if necessary, we may assume that $B$ is smooth and it embeds into a geometrically integral smooth projective curve $\overline{B}$ defined over $K$ such that
\[
S_\infty:=(\overline{B}\setminus B)(\olQ)=(\overline{B}\setminus B)(K).
\]

The proof is then divided into four steps.

\medskip

\textbf{Step 1. Function field heights.}

Let
\[
F=\olQ(\overline{B})=\olQ(B)
\]
be the function field of $\overline{B}$ (and $B$) over $\olQ$. We use the following conventions on the height functions over $F$, and refer the reader to \cite[1.4.6]{HtBook} and \cite{Baker2009} for more information. The set $M_F$ of places of $F$ is identified with $\overline{B}(\olQ)$. For every $x\in\overline{B}(\olQ)$, the corresponding absolute value $|\cdot|_x$ on $F$ is determined by
\[
|f|_x=\exp(-\operatorname{ord}_x(f)).
\]
Then these absolute values $(|\cdot|_x)_{x\in M_F}$ satisfy the product formula and give the standard Weil height function
\[
h_F:\P^1(F)\to\R_{\geq0},\,[z_0:z_1]\mapsto\sum_{x\in M_F}\log\max\left\lbrace|z_0|_x,|z_1|_x\right\rbrace.
\]
Observe that the constant field of $F$ is $\olQ$. The canonical height $\hat{h}_{f,F}$ of $f$ on $\P^1(F)$ is determined by
\[
\hat{h}_{f,F}(\lambda)=\lim_{n\to\infty}\frac{1}{d^n}h_F(f^{\circ n}(\lambda)).
\]
We identify $F$ with the affine chart $\{Z_1=1\}\subseteq\P^1(F)$, where $[Z_0:Z_1]$ are homogeneous coordinates on $\P^1$. The functions $h_F$ and $\hat{h}_{f,F}$ extend to $\P^1(\overline{F})$; see \cite{Baker2009}.

Since $\varphi$ is nonconstant, $f$ is non-isotrivial over $F$. By Baker \cite[Corollary~1.8]{Baker2009},
\[
\hat{h}_{f,F}(a)>0
\]
because $a$ is not $f$-preperiodic.

For $c\in S_\infty$, define
\begin{align*}
\nu_c&=\operatorname{ord}_c(A_d),\\
s_c&=\max\left(\left\{0,\frac{\nu_c}{d-1}\right\}\cup
\left\{\frac{\nu_c-\operatorname{ord}_c(A_j)}{d-j}:0\leq j<d,\ A_j\neq0\right\}\right)\in\Q_{\geq0},\\
\lambda_c&=\lim_{m\to\infty}\frac{1}{d^m}\max\{0,-\operatorname{ord}_c(f^{\circ m}(a))\}.
\end{align*}
We show that $\lambda_c$ is well-defined. If $-\operatorname{ord}_c(f^{\circ n}(a))\leq s_c$ for all $n\geq0$, then $\lambda_c=0$. Suppose now that $-\operatorname{ord}_c(f^{\circ n}(a))>s_c$ for some $n\geq0$. Then inductive computations show
\begin{equation}\label{eq:nrord}
-\operatorname{ord}_c(f^{\circ(n+r)}(a))=-d^r\operatorname{ord}_c(f^{\circ n}(a))-\frac{d^r-1}{d-1}\nu_c
>s_c\quad(r\geq0),
\end{equation}
and then
\[
\lambda_c=\lim_{r\to\infty}\left(-\frac{\operatorname{ord}_c(f^{\circ n}(a))}{d^n}-\frac{d^r-1}{d^{n+r}(d-1)}\nu_c\right)=\frac{1}{d^n}\left(-\operatorname{ord}_c(f^{\circ n}(a))-\frac{\nu_c}{d-1}\right)>0.
\]
Thus, the limit defining $\lambda_c$ exists and $(d-1)\lambda_c\in\Z[1/d]\cap\Q_{\geq0}$.

Define
\[
S_+=\{c\in S_\infty:\lambda_c>0\}.
\]
Since $a$ and the coefficients of $f$ are regular on $B$, by the local decomposition of $\hat{h}_{f,F}$ (see \cite[\S~1.2]{Baker2009}) we compute that
\[
\sum_{c\in S_+}\lambda_c=\sum_{c\in S_\infty}\lambda_c=\hat{h}_{f,F}(a)>0.
\]
In particular, $S_+\neq\emptyset$.

Fix a sufficiently large integer $N$ such that for $b=f^{\circ N}(a)$, one has
\[
\ell_c:=-\operatorname{ord}_c(b)>s_c,\quad
k_c:=(d-1)\ell_c-\nu_c=(d-1)d^N\lambda_c\in\Z_{>0}
\]
for all $c\in S_+$, and
\[
\sum_{c\in S_+}k_c=(d-1)d^N\hat{h}_{f,F}(a)\geq2g(\overline{B}).
\]
For $r\geq0$, set $b_r=f^{\circ r}(b)$. Define
\[
D=\sum_{c\in S_+}k_c[c]\neq0,
\]
which is an effective divisor on $\overline{B}$.

For each $c\in S_\infty$, choose a uniformizer $w_c\in K(\overline{B})$, i.e., $\operatorname{ord}_c(w_c)=1$. For $c\in S_+$, set
\[
\beta_c=(w_c^{k_c}A_d b^{d-1})(c)\in K^\times.
\]
For $c\in S_+$ and $r\geq0$, using $\ell_c>s_c$ and \eqref{eq:nrord} we compute inductively that
\begin{equation}\label{eq:bdeg1}
-\operatorname{ord}_c(b_r)=d^r\ell_c-\frac{d^r-1}{d-1}\nu_c\quad\text{and}\quad u_r(c)=\beta_c^{d^r},
\end{equation}
where $u_r:=w_c^{k_cd^r}A_d b_r^{d-1}$. For $c\in S_\infty\setminus S_+$, we have
\begin{equation}\label{eq:bdeg2}
\max\{-\operatorname{ord}_c(b_r)\colon r\geq0\}\leq s_c.
\end{equation}

\medskip

\textbf{Step 2. A nef adelic line bundle.}

Define $H=\sO_{\overline{B}}(D)$, which is an ample line bundle on $\overline{B}$. In this step, we construct a nef adelic line bundle $\overline{H}$ with underlying line bundle $H$ such that
\begin{equation}\label{eq:pairheight}
h_{\overline{H}}(t)=(d-1)\hat{h}_{f_t}(b(t))
\end{equation}
for all $t\in B(\olQ)$. Here $h_{\overline{H}}$ is the absolute logarithmic height of $\overline{H}$, normalized over $\Q$. This result is an analogue of Theorem~\ref{thm:met}.

Let $v\in M_K$. Define
\[
\psi_v(t)=(d-1)\lim_{r\to\infty}\frac{1}{d^r}\log\max\left\lbrace1,|b_r(t)|_v\right\rbrace,\quad t\in B(\C_v).
\]
By \cite[Theorem~4.27]{FG22}, it extends to a nonnegative continuous subharmonic function on $B_v^{\mathrm{an}}$. Regard $1$ as the canonical rational section of $H$ with divisor $D$, and define its norm at $v$ by $-\log\|1\|_v=\psi_v$. By \eqref{eq:bdeg1} and \eqref{eq:bdeg2}, the boundary coefficient in \cite[Theorem~4.27]{FG22} is
\[
q_c(f,b)=\lim_{r\to\infty}\frac{1}{d^r}\max\{0,-\operatorname{ord}_c(b_r)\}
=\frac{\operatorname{ord}_c(D)}{d-1};
\]
hence, \cite[Theorem~4.27]{FG22} shows that for every $c\in S_\infty$, the function
\[
\psi_v(t)-\operatorname{ord}_c(D)\log|w_c(t)|_v^{-1}
\]
extends continuously and subharmonically through $c$. When $c\in S_+$, using \eqref{eq:bdeg1} on a sufficiently small punctured analytic disk at $c$, it is easy to compute that
\begin{equation}\label{eq:pairboundary}
\lim_{t\to c}\left(\psi_v(t)-k_c\log|w_c(t)|_v^{-1}\right)=\log|\beta_c|_v.
\end{equation}

By \cite[Theorem~4.28]{FG22}, the metric $\|\cdot\|_v$ is semipositive for every $v\in M_K$. Define
\[
\|\cdot\|=(\|\cdot\|_v)_{v\in M_K}\quad\text{and}\quad\overline{H}=(H,\|\cdot\|).
\]
We show that $\overline{H}$ satisfies the property $(*)$: There are a finite subset $S\subseteq M_K$ containing all archimedean places, a smooth projective model $\mathcal{B}$ of $\overline{B}$ over $\sO_{K,S}$ with geometrically integral fibers, and a relatively ample model $\sH$ of $H$ on $\mathcal{B}$ such that $\|\cdot\|_v$ is induced by $(\mathcal{B},\sH)$ for all $v\notin S$.

By Riemann--Roch and $\deg D\geq2g(\overline{B})$, we can choose
\[
\eta\in H^0(\overline{B},\sO(D))\setminus\bigcup_{c\in S_+}H^0(\overline{B},\sO(D-[c])).
\]
Viewed as a rational function, $\eta\in K(\overline{B})$ has pole divisor exactly $D$. Thus, the morphism $\eta:\overline{B}\to\P^1_K$ is finite of degree $\deg D$, and
\[
\eta^*\sO_{\P^1_K}(1)\simeq H.
\]
Set
\[
\mathcal{F}=\{\eta,b\}\cup\{A_j\colon A_j\neq0,\ 0\leq j\leq d\}
\cup\{w_c\colon c\in S_\infty\}\subseteq K(\overline{B}).
\]
Let $T\subseteq\overline{B}$ be the finite reduced closed subscheme consisting of $S_\infty$ and the supports of the zero and pole divisors of the functions in $\mathcal{F}$. By spreading out, there is a finite subset $S\subseteq M_K$ containing all archimedean places and a smooth projective $\sO_{K,S}$-model $\mathcal{B}$ of $\overline{B}$, with geometrically integral fibers, such that $\eta$ extends to a finite morphism
\[
\boldsymbol{\eta}:\mathcal{B}\to\P^1_{\sO_{K,S}}.
\]
Define
\[
\sH:=\boldsymbol{\eta}^*\sO_{\P^1_{\sO_{K,S}}}(1),
\]
which is a line bundle on $\mathcal{B}$ with generic fiber $\eta^*\sO_{\P^1_K}(1)\simeq H$. The line bundle $\sH$ is relatively ample because $\boldsymbol{\eta}$ is finite. After enlarging $S$, we may also assume that the following conditions hold:
\begin{itemize}
\item The Zariski closure of $T$ in $\mathcal{B}$ is finite \'etale over $\sO_{K,S}$.
\item If $x_1,x_2$ are two distinct closed points of $T$, then the Zariski closures of $x_1,x_2$ in $\mathcal{B}$ are disjoint.
\item For every $h\in\mathcal{F}$ with $\operatorname{div}_{\overline{B}}(h)=\sum_{j}\mu_j[y_j]$, one has $\operatorname{div}_{\mathcal{B}}(h)=\sum_{j}\mu_j[\overline{y_j}]$.
\end{itemize}
Note that each $c\in S_\infty$ extends to a section $\mathcal{C}_c:\Spec\sO_{K,S}\to\mathcal{B}$, and near $\mathcal{C}_c$ the function $w_c$ generates its ideal. Moreover,
\[
\boldsymbol{\eta}^*(\infty_R)=\sum_{c\in S_+}k_c[\mathcal{C}_c],
\]
where $\infty_R$ is the $\sO_{K,S}$-point at infinity of $\P^1_{\sO_{K,S}}$. For every $h\in\mathcal{F}$ and every $c\in S_\infty$, on a Zariski open neighborhood of $\mathcal{C}_c$ we have
\begin{equation}\label{eq:uhc}
h=w_c^{\operatorname{ord}_c(h)}u_{h,c},
\end{equation}
where $u_{h,c}\in\sO_{\mathcal{B}}^\times$.

Fix $v\in M_K\setminus S$. We show that $\|\cdot\|_v$ is induced by $(\mathcal{B},\sH)$. The set
\[
U_v=\left\lbrace t\in B_v^{\mathrm{an}}\colon|\eta(t)|_v>1\right\rbrace
\]
is the union of the residue disks of the points of $S_+$, with their centers removed. On the punctured residue disk $D_c^*\subseteq B_v^{\mathrm{an}}$ at $c\in S_+$, from \eqref{eq:uhc} we deduce that
\[
|b(t)|_v=\rho(t)^{-\ell_c},\quad|A_d(t)|_v=\rho(t)^{\nu_c},\quad|\eta(t)|_v=\rho(t)^{-k_c}=|A_d(t)|_v|b(t)|_v^{d-1},
\]
where $0<\rho(t):=|w_c(t)|_v<1$. For $c\in S_+$, using the inequality $\ell_c>s_c$, we compute inductively that
\[
|b_r(t)|_v=|A_d(t)|_v^{(d^r-1)/(d-1)}|b(t)|_v^{d^r}\quad(r\geq1)\quad\text{on }D_c^*.
\]
Taking logarithms and letting $r\to\infty$ yield
\begin{equation}\label{eq:psivUv}
\psi_v(t)=(d-1)\log|b(t)|_v+\log|A_d(t)|_v=\log|\eta(t)|_v,\quad t\in U_v.
\end{equation}
Set $B_0=\overline{B}\setminus\Supp(D)=B\cup(S_\infty\setminus S_+)$. By \cite[Theorem~4.27]{FG22}, $\psi_v$ extends to a nonnegative continuous subharmonic function on $(B_0)_v^{\mathrm{an}}$. Then
\[
C_v=\left\lbrace t\in\overline{B}_v^{\mathrm{an}}\colon|\eta(t)|_v\leq1\right\rbrace\subseteq(B_0)_v^{\mathrm{an}}
\]
is a compact affinoid domain, with boundary equaling the boundary of $U_v$ in $(B_0)_v^{\mathrm{an}}$. Observe that the boundary values of $\psi_v$ are zero by \eqref{eq:psivUv}. By the non-negativity of $\psi_v$ and the maximum principle, $\psi_v=0$ on $C_v$. Combined with \eqref{eq:psivUv}, we obtain
\[
\psi_v(t)=\log\max\{1,|\eta(t)|_v\},\quad t\in B_v^{\mathrm{an}}.
\]
Via the identification $H\simeq\eta^*\sO(1)$, we see that $\|\cdot\|_v$ is the model metric of $(\mathcal{B},\sH)$ at $v$.

The proof of property $(*)$ is complete. From $(*)$, we deduce that $\|\cdot\|$ is an adelic metric; see \cite[\S~A.5.2]{YZ26}.

\smallskip

Let $t\in B(\olQ)$, and take a finite extension $E/K$ with $t\in B(E)$. Using the canonical section $1$, by \cite[\S~A.5.3]{YZ26} and the local decomposition of the canonical height we compute
\[
h_{\overline{H}}(t)=\sum_{w\in M_E}\frac{[E_w:\Q_w]}{[E:\Q]}\psi_w(t)
=(d-1)\hat{h}_{f_t}(b(t))\geq0,
\]
where $\psi_w$ is obtained by base change from the corresponding place of $K$. Thus, \eqref{eq:pairheight} holds.

Finally, we check the nefness of $\overline{H}$. For $c\in S_+$, the section $w_c^{-k_c}$ is a nonvanishing local frame of $H$ at $c$, and \eqref{eq:pairboundary} shows $-\log\|w_c^{-k_c}(c)\|_v=\log|\beta_c|_v$. Since $\beta_c\in K^\times$, the product formula yields
\[
h_{\overline{H}}(c)=-\sum_{v\in M_K}n_v\log\|w_c^{-k_c}(c)\|_v=\sum_{v\in M_K}n_v\log|\beta_c|_v=0.
\]
For $c\in S_\infty\setminus S_+$, computation using the local frame $1$ gives
\[
h_{\overline{H}}(c)=\sum_{v\in M_K}n_v\psi_v(c)\geq0.
\]
The adelic line bundle $\overline{H}$ has semipositive metrics at all places and nonnegative heights at all points of $\overline{B}(\olQ)$, so it is nef.

\medskip

\textbf{Step 3. The intertwined relation and bi-finite correspondences.}

We prove analogues of Proposition~\ref{prop:Md} and Lemma~\ref{lem:base} in this step.

Let $V$ be the Zariski closure of $\varphi(B)$ in $\MPoly^d_K$, which is a geometrically integral affine curve. Define an equivalence relation $\sim$ on $B(\olQ)$ by
\[
t\sim t'\quad\Longleftrightarrow\quad f_t\approx f_{t'}.
\]
We show that there is an integer $N_0\geq1$ such that every $\sim$-class has cardinality at most $N_0$. Since $\varphi$ is nonconstant and $\MPoly^d$ is affine, we can take a regular function $q$ on $\MPoly^d$ over $K$ such that $q\circ\varphi\in K[B]$ is nonconstant. The regular function $q\circ\varphi$ extends to a finite morphism $\overline{B}\to\P^1$ of degree $e_0\geq1$. Then every fiber of $\varphi$ over a point in $V(\olQ)$ has at most $e_0$ points. Let $M_d\in\Z_{>0}$ be as in Proposition~\ref{prop:Md}. Set $N_0=e_0M_d$. Then every $\sim$-class in $B(\olQ)$ has cardinality at most $N_0$. By Proposition~\ref{prop:rel}(1), the relation $\sim$ is a constructible subset of $B^2(\olQ)$ defined over $K$.

Applying Proposition~\ref{prop:rel}(2) to $V/K$, we obtain a nonconstant function
\[
u_V\in K(V)\setminus K
\]
satisfying the conclusion of Proposition~\ref{prop:rel}(2). Define
\[
u=\varphi^*(u_V)\in K(B)\setminus K.
\]

\smallskip

We prove the following property $(\Delta)$: Let $C\subseteq\overline{B}_{\olQ}^2$ be an irreducible bi-finite correspondence such that
\begin{equation}\label{eq:paircorsmall}
\overline{\{(t,t')\in C\cap B(\olQ)^2:h_{\overline{H}}(t)+h_{\overline{H}}(t')<\epsilon\}}=C,\quad\forall\epsilon>0.
\end{equation}
Then
\begin{enumerate}
\item[(i)] $\pr_1^*(u)=\pr_2^*(u)$, where $\pr_1,\pr_2:C\to\overline{B}_{\olQ}$ are the two projections.
\item[(ii)] There is a finite subset $J_C\subseteq B(\olQ)$ such that
\begin{equation}\label{eq:JCprop}
(t,t')\in C\cap B(\olQ)^2,\quad t\notin J_C\quad\Longrightarrow\quad t\sim t'.
\end{equation}
\end{enumerate}

\smallskip

Suppose that we are in the setting of $(\Delta)$. Then $C$ is an irreducible curve. We first show that the generic pair of $(\varphi\times\varphi)(C)$ is intertwined.

Take a number field $L\supseteq K$ over which $C$ is defined. Let $W\to C$ be the normalization, and write $\pi_1,\pi_2:W\to\overline{B}_{\olQ}$ for the two projections. Define
\[
W^\circ=\pi_1^{-1}(B_{\olQ})\cap\pi_2^{-1}(B_{\olQ}).
\]
Let $1\leq i\leq2$. Define
\[
\overline{H}_i=\pi_i^*\overline{H},\quad e_i=\deg\pi_i\geq1,\quad
(f_i,b_i)=\left((\pi_i|_{W^\circ})^*(f),(\pi_i|_{W^\circ})^*(b)\right).
\]
Then $f_i$ is a family of degree-$d$ polynomials parametrized by $W^\circ$, and $b_i$ is a marked point of $f_i$. Since $\varphi\circ\left(\pi_i|_{W^\circ}\right)$ is nonconstant and $b_i$ is not $f_i$-preperiodic, $(f_i,b_i)$ is an active dynamical pair parametrized by $W^\circ$, in the sense of \cite{FG22}. Observe that $(f_i,b_i)$ is non-integrable in the sense of \cite{FG22}, because the exceptional locus $E_d$ is finite and $\varphi\circ\left(\pi_i|_{W^\circ}\right)$ is nonconstant.

For $w\in W^\circ(\olQ)$ and $1\leq i\leq2$, define
\[
h_i(w):=(d-1)\hat{h}_{(f_i)_w}(b_i(w))=h_{\overline{H}_i}(w),
\]
where the last equality follows from Step 2. By \eqref{eq:paircorsmall}, we can take a sequence $(w_j)_{j=1}^\infty\subseteq W^\circ(\olQ)$ of pairwise distinct points such that
\[
\lim_{j\to\infty}\left(h_1(w_j)+h_2(w_j)\right)=0.
\]
As in the proof of Proposition~\ref{prop:tube}, the fundamental inequality implies
\[
h_{\overline{H}_i}(W)=0,\quad1\leq i\leq2.
\]
Applying Yuan's equidistribution theorem \cite[Theorem~3.1]{Yuan2008} twice, we obtain
\begin{equation}\label{eq:pairmeasures}
\frac{c_1(\overline{H}_1)_v}{e_1}=\frac{c_1(\overline{H}_2)_v}{e_2}
\end{equation}
for all $v\in M_L$.

For $v\in M_L$ and $t\in(W^\circ)_v^{\mathrm{an}}$, define
\[
g_{i,v}(t)=\psi_v(\pi_i(t))\quad(1\leq i\leq2),
\]
where the metrics and potentials are base changed from $K$ to $L$. Using the sections $s_i=\pi_i^*(1)$ of $\overline{H}_i$ ($1\leq i\leq2$), from \eqref{eq:pairmeasures} we obtain
\begin{equation}\label{eq:Del12}
e_2\Delta g_{1,v}=e_1\Delta g_{2,v}\quad\text{on }(W^\circ)_v^{\mathrm{an}}.
\end{equation}

Let $w\in M_L$ be an arbitrary archimedean place. By \cite[Theorem~5.7]{FG22} and \eqref{eq:Del12}, we have
\begin{equation}\label{eq:gvarch}
e_2g_{1,w}=e_1g_{2,w}\quad\text{on }(W^\circ)_w^{\mathrm{an}}.
\end{equation}
Let $q\in\left(W\setminus W^\circ\right)(\olQ)$ be arbitrary. Set $m_{i,q}=\operatorname{ord}_q(\pi_i^*(D))$ ($1\leq i\leq2$), and choose a local parameter $z$ at $q$. The boundary expansion of $\psi_w$ shows
\[
g_{i,w}=m_{i,q}\log|z|_w^{-1}+O(1)\quad(z\to0),\quad1\leq i\leq2.
\]
Combined with \eqref{eq:gvarch}, we obtain $e_2m_{1,q}=e_1m_{2,q}$. Since both $\pi_1^*(D)$ and $\pi_2^*(D)$ have supports contained in $W\setminus W^\circ$, we conclude that
\begin{equation}\label{eq:divsame}
e_2\pi_1^*(D)=e_1\pi_2^*(D).
\end{equation}

Let $v\in M_L$ be an arbitrary finite place. By \eqref{eq:Del12}, the function $e_2g_{1,v}-e_1g_{2,v}$ is harmonic on $(W^\circ)_v^{\mathrm{an}}$. Using the boundary expansion of $\psi_v$, we see that $e_2g_{1,v}-e_1g_{2,v}$ extends continuously through every point of $W\setminus W^\circ$. By \cite[\S~1.2.3]{FG22}, the extension of $e_2g_{1,v}-e_1g_{2,v}$ is harmonic. Thus, $e_2g_{1,v}-e_1g_{2,v}$ equals a constant $\kappa_v$ on $W_v^{\mathrm{an}}$ by the maximum principle. For all archimedean $w\in M_L$, set $\kappa_w=0$.

By \eqref{eq:divsame}, the rational section $s=s_1^{\otimes e_2}\otimes s_2^{\otimes(-e_1)}$ has divisor zero and trivializes $H_1^{\otimes e_2}\otimes H_2^{\otimes(-e_1)}$ on $W$. Both $\overline{H}_1$ and $\overline{H}_2$ are adelic by Step 2, and there is a finite subset $S'\subseteq M_L$ containing all places over some place in $S\subseteq M_K$ such that $s$ extends as a nowhere-vanishing model section with norm $1$. Thus, $\kappa_v=0$ for all $v\in M_L\setminus S'$. Define
\[
\kappa:=\sum_{v\in M_L}\frac{[L_v:\Q_v]}{[L:\Q]}\kappa_v\in\R.
\]
Then $e_2h_1-e_1h_2=\kappa$ on $W^\circ(\olQ)$. Evaluating at $w_j$ and letting $j\to\infty$ imply $\kappa=0$. Therefore,
\[
e_2h_1=e_1h_2\quad\text{on }W^\circ(\olQ).
\]
Applying the direction $(2)\Rightarrow(3)$ of \cite[Theorem~B]{FG22} to $(f_1,b_1)$ and $(f_2,b_2)$, by \cite[Theorem~3.39]{FG22} we conclude that
\begin{equation}\label{eq:genintw}
f_{\pi_1(w_\eta)}\approx f_{\pi_2(w_\eta)},
\end{equation}
where $w_\eta$ is the generic point of $W$.

\smallskip

We prove (i) of $(\Delta)$. Let $Z$ be the Zariski closure of $(\varphi\times\varphi)(C\cap B_{\olQ}^2)$, which is an irreducible curve in $V_{\olQ}^2$. Both projections $Z\to V_{\olQ}$ are dominant. By \eqref{eq:genintw}, $Z$ is contained in the Zariski closure $\hat{Z}$ of the intertwined relation on $V_{\olQ}$. Using Proposition~\ref{prop:Md} and Proposition~\ref{prop:rel}(1), it is easy to see that $Z$ is an irreducible component of $\hat{Z}$. By Proposition~\ref{prop:rel}(2), the two pullbacks of $u_V$ on $Z$ coincide, and hence
\[
\pr_1^*(u)=\pr_2^*(u)\quad\text{on }C.
\]

\smallskip

Next we prove (ii) of $(\Delta)$. Since $\sim$ is constructible, by \eqref{eq:genintw} there is a Zariski open dense subset $O\subseteq C\cap B_{\olQ}^2$ contained in $\sim$. The complement $(C\cap B_{\olQ}^2)\setminus O$ is finite, so
\[
J_C=\pr_1\left((C\cap B_{\olQ}^2)\setminus O\right)\subseteq B(\olQ)
\]
is finite and satisfies \eqref{eq:JCprop}.

The proof of $(\Delta)$ is complete.

\medskip

\textbf{Step 4. Inertia and Frobenius.}

Suppose that the conclusion of Theorem~\ref{thm:pair} fails. Then there is a sequence $(t_j)_{j=1}^\infty\subseteq B(\Kab)$ of pairwise distinct points such that $\lim_{j\to\infty}\hat{h}_{f_{t_j}}(a(t_j))=0$. By \eqref{eq:pairheight} and $b=f^{\circ N}(a)$, we have
\[
h_{\overline{H}}(t_j)=(d-1)d^N\hat{h}_{f_{t_j}}(a(t_j))\to0,\quad j\to\infty.
\]

By the Chebotarev density theorem, choose a rational prime $p>\max\{2,N_0\}$ splitting completely in $K$ and a place $v\in M_K\setminus S$ above $p$. Then $K_v=\Q_p$. Fix an extension $\bar{v}$ of $v$ to $\olQ$. After passing to an infinite subsequence, we may assume that either $K(t_j)_{\bar{v}}/\Q_p$ is wildly ramified for all $j\geq1$, or $K(t_j)_{\bar{v}}/\Q_p$ is tamely ramified for all $j\geq1$. Set
\[
P=\Gal(\Kab/K)\cdot\{t_j:j\geq1\}.
\]
Then either $K(t)_{\bar{v}}/\Q_p$ is wildly ramified for all $t\in P$, or $K(t)_{\bar{v}}/\Q_p$ is tamely ramified for all $t\in P$. As $B$ is a geometrically integral curve, by the construction of $P$ we have
\begin{equation}\label{eq:smht}
\overline{\{t\in P:h_{\overline{H}}(t)<\epsilon\}}=\overline{B}_{\olQ},\quad\forall\epsilon>0.
\end{equation}
Set $\delta=p^{-1/(p-1)}\in(0,1)$ and $G=\id_B:B\to B$.

\smallskip

\textbf{Case 1.} Suppose first that $K(t)_{\bar{v}}/\Q_p$ is wildly ramified for all $t\in P$. For every $t\in P$ with $E=K(t)$, by Lemma~\ref{lem:in}(1) choose an order-$p$ inertia element $\tau_E\in\Gal(E/K)$, as in Case 1 of the proof of Theorem~\ref{thm:main}. Define
\[
B_{\mathrm{In}}=\left\lbrace(t,\tau_{K(t)}(t))\colon t\in P\right\rbrace.
\]
As in \S~\ref{sec:infr}, $B_{\mathrm{In}}$ is $\Gal(\olQ/K)$-invariant and $B_{\mathrm{In}}\subseteq T_1(\delta)$. For $G=\id_B$, using \eqref{eq:smht} and the argument in \S~\ref{sec:infr}, we obtain an irreducible bi-finite correspondence $C_{\mathrm{In}}\subseteq Z(B_{\mathrm{In}})$ on $\overline{B}_{\olQ}$, satisfying \eqref{eq:paircorsmall}. Let $J_{C_{\mathrm{In}}}$ be as in $(\Delta)$(ii) of Step 3. Take $(t,\tau_E(t))\in B_{\mathrm{In}}\cap C_{\mathrm{In}}$ with $t\notin J_{C_{\mathrm{In}}}$, where $E=K(t)$. Applying $(\Delta)$(ii), the points
\[
t,\tau_E(t),\ldots,\tau_E^{p-1}(t)
\]
are pairwise distinct and contained in a single $\sim$-class as in \S~\ref{sec:infr}, contradicting $p>N_0$.

\smallskip

\textbf{Case 2.} Suppose that $K(t)_{\bar{v}}/\Q_p$ is tamely ramified for all $t\in P$. As in Case 2 of the proof of Theorem~\ref{thm:main}, choose a Frobenius lift $\sigma$ and define
\[
B_{\mathrm{Fr}}=\left\lbrace(t,\sigma(t))\colon t\in P\right\rbrace.
\]
As in \S~\ref{sec:infr}, $B_{\mathrm{Fr}}$ is $\Gal(\olQ/K)$-invariant and $B_{\mathrm{Fr}}\subseteq T_p(\delta)$. For $G=\id_B$, using \eqref{eq:smht} and the argument in \S~\ref{sec:infr}, we obtain an irreducible bi-finite correspondence $C_{\mathrm{Fr}}\subseteq Z(B_{\mathrm{Fr}})$ on $\overline{B}_{\olQ}$ that satisfies \eqref{eq:paircorsmall}. Let $W\to C_{\mathrm{Fr}}$ be the normalization and $\pi_1,\pi_2:W\to\overline{B}_{\olQ}$ be the two projections. Take a finite extension $L/K$ over which $C_{\mathrm{Fr}}$ is defined. Proposition~\ref{prop:tube} shows $\pi_2^*H\equiv p\pi_1^*H$. By $(\Delta)$(i), the nonconstant function $u\in K(B)\setminus K$ (fixed in Step 3) satisfies $\pi_1^*(u)=\pi_2^*(u)$ in $L(W)$. Then Lemma~\ref{lem:inv} gives a contradiction as in \S~\ref{sec:infr}.
\end{proof}

\subsection*{Acknowledgment}
The author would like to thank Junyi Xie for useful discussions and comments.

\subsection*{AI Disclosure}
The author initially obtained the proof of Theorem~\ref{thm:cyc} using \cite{JXZ2026} (presented in \S~\ref{sec:cyc}). Subsequently, during an exploratory discussion regarding whether the techniques in \cite{JSX26} could be adapted to the abelian case (Theorem~\ref{thm:pcf}), GPT-6 Astra suggested the correspondence method of Ji--Song--Xie \cite[\S~6]{JSX26}. The author then developed the current proof of Theorem~\ref{thm:main}, assisted by GPT-6 Astra with several specific arguments in the proofs of Theorem~\ref{thm:met}, Proposition~\ref{prop:tube}, and Lemma~\ref{lem:base}. In addition, GPT-5.6 Sol was used for literature search, language polishing, and preliminary proofreading. The author independently verified and rewrote all AI-assisted material, and takes full responsibility for the paper's content and correctness.

\end{document}